\DeclareMathOperator{\diag}{diag}
\DeclareMathOperator{\Int}{int}
\DeclareMathOperator{\RE}{Re}
\DeclareMathOperator{\range}{range}
\newcommand{\OS}[2]{\mathcal M_{{#2} \times {#1}}}
\def\hh{ h}
\def\cB{\mathcal B}
\def\Bgg{\cB_{g' \times g} }
\def\Bdd{\cB_{d' \times d} }
\def\Mdd{\CC^{d' \times d} }
\def\U{U}
\def\T{T}
\def\beq{\begin{equation} }
\def\eeq{\end{equation} }
\def\eps{\varepsilon}
\def\ep{\varepsilon}
\def\al{\alpha}
\def\be{\beta}
\def\Vx{x}
\def\VX{X}
\def\Vxy{\begin{bmatrix}x\\y\end{bmatrix}}
\def\db{{\rm dist}}
\def\w{w}
\def\ben{\begin{enumerate} }
\def\een{\end{enumerate} }
\def\benum{\begin{enumerate} }
\def\eenum{\end{enumerate} }
\def\x{x}
\newcommand{\ax}{\langle\x\rangle}
\def\oxs{\x^\T}
\newcommand{\axs}{\langle\x ,\oxs\rangle}
\def\bmat{\left[\begin{array}{ccccccccccccccc} }
\def\emat{\end{array}\right]}
\def\bmat{\begin{bmatrix}}
\def\emat{\end{bmatrix}}
\def\beq{\begin{equation}}
\def\eeq{\end{equation}}
\def\barr{\begin{array}}
\def\earr{\end{array}}
\def\cB{\mathcal{B}}
\def\T{\ast}
\def\NN{\mathbb N}
\def\cB{ {\mathcal B} }
\def\cD{ {\mathcal D} }
\def\cF{ {\mathscr F} }
\def\cN{ {\mathcal N} }
\def\cS{{\mathcal S} }
\def\cW{{\mathcal W}}
\def\cU{\mathcal U}
\def\tL{{\tilde L}}
\def\j1tog{ j= 1, \ldots, g }
\def\Bg{\cB_{g^\prime}}
\def\Bgg{ {\cB}_{g^{\prime}\times g} }
\def\Bdd{ {\cB}_{d^{\prime} \times d}}
\def\Fgg{\mathcal F_{g^\prime\times g}}
\def\Fggn{\Fgg(n)}
\def\Fgp{\mathcal F_{g^\prime}}
\def\Fg{\mathcal F_g}
\def\Fgpn{\Fgp(n)}
\def\Fgn{\Fg(n)}
\def\Mgg{\CC^{g^\prime\times g}}
\def\L0t{\ (L_0 \otimes I_n ) \ }
\def\PLINE1{\beta}
\newcommand{\CC}{{\mathbb C}}
\newcommand{\RR}{{\mathbb R}}
\newcommand{\DD}{{\mathbb D}}
\newcommand{\pt}{\partial}
\newtheorem{thm}{Theorem}[section]
\newtheorem{cor}[thm]{Corollary}
\newtheorem{lem}[thm]{Lemma}
\newtheorem{lemma}[thm]{Lemma}
\newtheorem{prop}[thm]{Proposition}
\theoremstyle{definition}
\newtheorem{defi}[thm]{Definition}
\theoremstyle{remark}
\newtheorem{rem}[thm]{Remark}
\numberwithin{equation}{section}
\newtheorem{exa}[thm]{Example}
         {\begin{exa}}
             {{\hfill $\Box ~~$}\end{exa}}
\newcounter{Inc}
\begin{document}
\setcounter{page}{1}

\title{Noncommutative ball maps}

\author[Helton]{J. William Helton${}^1$}
\address{Department of Mathematics\\
  University of California \\
  San Diego}
\email{helton@math.ucsd.edu}
\thanks{${}^1$Research supported by NSF grants 
DMS-0700758, DMS-0757212, and the Ford Motor Co.}

\author[Klep]{Igor Klep${}^2$}
\address{Univerza v Ljubljani, Oddelek za matematiko
In\v stituta za matematiko, fiziko in mehaniko 
}
\email{igor.klep@fmf.uni-lj.si}
\thanks{${}^2$Supported by the Slovenian Research Agency (project No. Z1-9570-0101-06).}

\author[McCullough]{Scott McCullough${}^3$}
\address{Department of Mathematics\\
  University of Florida 
   }
   \email{sam@math.ufl.edu}
\thanks{${}^3$Research supported by the NSF grant DMS-0758306.}

\author[Slinglend]{Nick Slinglend}
\address{Department of Mathematics\\
  University of California \\
  San Diego}
\email{nslingle@euclid.ucsd.edu}

\subjclass[2000]{Primary 47A56, 46L07; Secondary 32H99, 32A99, 46L89}
\date{26 September 2008}
\keywords{noncommutative analytic function, complete isometry, ball map,
linear matrix inequality}


\begin{abstract}
In this paper,
we analyze problems involving matrix variables
for which we use a noncommutative algebra setting.
 To be more specific, we use a class of functions (called NC analytic
 functions)
 defined by
power series in noncommuting variables and
evaluate these functions on sets of matrices of all dimensions;
 we call such situations  dimension-free.
These types of functions have recently been used
 in the study of dimension-free linear
system engineering problems \cite{HMPVieee}, \cite{deOHMPima}.

In this paper we characterize NC analytic  maps that send
dimension-free matrix balls to dimension-free matrix
balls  and carry the boundary to the boundary;
such maps we call "NC ball maps".
We find that
up to normalization, an NC ball map
is the direct sum of the identity map with an
NC analytic map of the ball into the ball.
That is,
 ``NC ball maps'' are very simple, in contrast
to the classical result of D'Angelo on such analytic maps
in $\CC$.
Another mathematically natural class
of maps carries a variant of the
noncommutative distinguished boundary to the 
boundary,
but on these our results are  limited.

We shall be interested in several types of noncommutative
balls, conventional ones, but also balls defined by
constraints called Linear Matrix Inequalities (LMI).
What we do here is a small piece of the bigger puzzle
of understanding how LMIs behave with respect
to noncommutative change of variables.
\end{abstract}

\maketitle

\section{Introduction}

In the introduction we will state some of our main results.
For this we need to start with the definitions of NC
polynomials (\S \ref{subsec:NCpoly}) and
NC analytic maps (\S \ref{subsec:anal}).
We then proceed to define NC ball maps in \S \ref{subsec:main}, 
where we explain what it means for an NC ball map to map ball to ball with
boundary to boundary. 
After that we can and do state our main results classifying
NC ball maps in \S \ref{subsec:main}
and \S \ref{subsec:nonzero}. Finally, the introduction concludes by 
considering two types of generalizations, 
the first being to balls defined by LMIs, the second being
to NC analytic maps carrying special sets on the boundary of a ball to
the boundary of a ball.

\subsection{Words and NC polynomials}\label{subsec:NCpoly}
Let $g',g\in\NN$.
We write $\ax$ for the monoid freely
generated by $\x$, i.e., $\ax$ consists of {\bf words} in the $g'g$
letters $x_{11},\ldots,x_{1g},x_{21},\ldots,x_{g'g}$ 
(including the empty word $\emptyset$ which plays the role of the identity $1$).
Let $\CC\ax$ denote the associative
$\CC$-algebra freely generated by $\x$, i.e., the elements of $\CC\ax$
are polynomials in the noncommuting variables $\x$ with coefficients
in $\CC$. Its elements are called {\bf NC polynomials}.
An element of the form $aw$ where $0\neq a\in \CC$ and
$w\in\ax$ is called a {\bf monomial} and $a$ its
{\bf coefficient}. Hence words are monomials whose coefficient is
$1$.
Let $\oxs=(x_{11}^\T,\ldots,x_{g'g}^\T)$ denote another set of $g'g$ symbols.
We shall also consider
the free algebra $\CC\axs$ that comes equipped with the natural {\bf involution}
$x_{ij}\mapsto x_{ij}^\T$.
For example, $$(1+  i\, x_{11}^2 x_{23}^\T x_{34}^\T)^\T =1-  i\, x_{34} x_{23} (x_{11}^\T )^2.$$
(Here $ i$ denotes the imaginary unit $\sqrt{-1}$.)

\subsubsection{NC matrix polynomials}

A {\bf matrix valued NC polynomial} is an NC polynomial with matrix coefficients.
We shall use the phrase \emph{scalar} NC polynomial if we want to emphasize
the absence of matrix constructions. Often when the context makes the usage clear we drop
adjectives such as scalar, $1 \times 1$, matrix polynomial, matrix of polynomials
and the like.

\subsubsection{Polynomial evaluations}

If $p$ is an NC polynomial in $x$ and $X\in\left(\CC^{n\times n}\right)^{g'\times g}$,
the evaluation $p(X)$ is defined by simply replacing $x_{ij}$ by $X_{ij}$.
For example, if $p(x)=A x_{11} x_{21}$, where
$$
A=\left[\begin{array}{ccc}
-4&3&2\\
2&-1&0\\
\end{array}\right],
$$
then
$$
p\left( \left[\begin{array}{cc}
0&1 \\
1 & 0
\end{array}\right],
\left[\begin{array}{cc}
1&0\\
0&-1
\end{array}\right]
\right)=
A\otimes \left( \left[\begin{array}{cc}
0&1 \\
1 & 0
\end{array}\right]\,
\left[\begin{array}{cc}
1&0\\
0&-1
\end{array}\right]\right)=
\left[\begin{array}{cccccccccccc}
0 &4 &0 & -3 & 0 &-2\\
-4 & 0& 3 & 0 & 2 & 0\\
0&-2&0&1&0&0\\
2&0&-1&0&0&0
\end{array}\right].
$$
On the other hand, if $p(x)=A$ and $X\in\left(\CC^{n\times n}\right)^{g'\times g}$,
then $p(X)=A\otimes I_n$.

The tensor product in the expressions above is the usual (Kronecker) tensor product
of matrices. Thus we have reserved the tensor product notation for the tensor product
of matrices and have eschewed the strong temptation of using $A\otimes x_{k\ell}$ in place of
$Ax_{k\ell}$ when $x_{k\ell}$ is one of the noncommuting variables.

\subsection{Definition of NC analytic functions}
\label{subsec:anal}

  An elegant theory of noncommutative analytic functions is
  developed in the articles \cite{KV,KVV1,KVV} and \cite{Vo1,Vo}; see also \cite{Pop1}.
  What we need in this article are specializations of definitions
  of these papers.
  In this section we 
  summarize the definitions and properties
  needed in the sequel.

For $d',d\in\NN$  define
\begin{eqnarray}\label{eq:defBall3}
\cB_{d'\times d}& :=&
\bigcup_{n=1}^\infty\left\{ X\in\left(\CC^{n\times n}\right)^{d'\times d}
\mid  I_{dn}- X^\T X \succeq 0\right\}, \\
\Int\cB_{d'\times d}& :=&
\bigcup_{n=1}^\infty\left\{ X\in\left(\CC^{n\times n}\right)^{d'\times d}
\mid  I_{dn}- X^\T X \succ 0\right\}, \\
\partial\cB_{d'\times d}& :=&
\bigcup_{n=1}^\infty\left\{ X\in\left(\CC^{n\times n}\right)^{d'\times d}
\mid  \|X\|=1 \right\},  \\
\OS{d}{d'}& :=&
\bigcup_{n=1}^\infty \left(\CC^{n\times n}\right)^{d'\times d}.
\end{eqnarray}

We shall occasionally use the notation
\begin{equation*}
\begin{split}
  \Bdd(N)
& =\{X=
  \begin{bmatrix} X_{j,\ell}
  \end{bmatrix}_{j,\ell=1}^{d^\prime,d}\mid
   X_{j,\ell} \in \CC^{N\times N}, \; \|X\|\le 1\},\\
 \OS{d'}d(N)
& =\{X=
  \begin{bmatrix} X_{j,\ell}
  \end{bmatrix}_{j,\ell=1}^{d^\prime,d}\mid
   X_{j,\ell} \in \CC^{N\times N}\}.
\end{split}
 \end{equation*}

  Given $g',g\in\NN$, the 
  {\bf noncommutative (NC) $\eps$-neighborhood} of $0$
  in $\CC^{g'\times g}$
  is the (disjoint) union 
  $\bigcup_{N\in\NN}\{ X \in \OS g{g'}(N) \mid \|X\|<\eps \}$.
  An {\bf open NC domain} $\cD$ containing $0$ (in its interior)
  is a union $\bigcup_N \cD_N$ of open sets
   $\cD_N \subseteq \OS g{g'}(N)$ which is closed with
   respect to direct sums and such that there is an $\eps>0$
   such that $\cD$ contains the NC $\eps$-neighborhood of $0$.

A $d'\times d$ {\bf NC analytic function} $f$ on an open 
NC domain $\cD$ containing $0$
as follows:
\begin{enumerate}
\item
\label{it:series}
  $f$ has an {\bf NC power series}, for which there exists an
     NC $\eps>0$ neighborhood of $0$  on which it is convergent.
That is,
\beq\label{eq:powerSeriesDef}
  f=\sum_{w\in\ax} a_w w
\eeq
for $a_w\in\CC^{d'\times d}$ and
for every $N\in\NN$ and every $g'\times g$-tuple of square matrices
$X\in\Bgg$ with $\|X\|<\ep$ the series
\beq\label{eq:powerSeriesDefEval}
f(X)=\sum_{w\in\ax} a_w \otimes w(X)
\eeq
converges.
  We interpret convergence for a given $X$ as conditional
  of the series
 \begin{equation*}
    \sum_{\alpha=0}^\infty \sum_{|w|=\alpha} a_w \otimes w(X).
 \end{equation*}
   Thus the {\it order} of summation is over the
   homogeneous parts of the power series expansion.
   Thus with $f^{(\alpha)}$ equal to
  the $\alpha$ {\bf homogeneous part} in the NC power series expansion
  of $f$, the series converges for a given $X$ provided
 \begin{equation}
  \label{eq:order}
   \sum_{\alpha=0}^\infty f^{(\alpha)}(X)
 \end{equation}
  converges.
   Since both $a_w$ and $w(X)$ are matrices, the
   particular norm topology chosen has no influence
   on convergence.
The radius $\ep$ of this ball of convergence
(or sometimes, by abuse of notation, the ball itself)
will be called the {\bf series radius}.

\item
\label{it:opsub}
  If $a : \cW \to \cD$ is a matrix valued function analytic
      on a domain $\cW$ in $\CC^N$, the composition
      $ f \circ a$ is a matrix valued analytic function
      on $\cW$ and continuous to $\pt\cW$.
\end{enumerate}
\noindent

\begin{rem}
  Popescu \cite{Pop1} has a notion of free analytic function in $g'$ variables
  (that is, $g=1$) based
  upon power series expansions like that in \eqref{eq:order}. His
  definition allows for operator coefficients, but on the other hand
  requires convergence of the NC power series on all of $\Int \cB_{g'}$
  (the NC $1$-neighborhood of $0$ in $\CC^{g'}$).
  It turns out that for \emph{bounded} NC analytic functions with
  \emph{matrix coefficients} the two notions
  are the same, see Lemma \ref{lem:abs-converge-contractive}.
\end{rem}

\subsubsection{Properties of NC analytic functions}

\begin{prop}\label{prop:anal}
Let $\cD$ be an NC domain containing $0$.
\ben[\rm (i)]
\item
The sum of two $d'\times d$ NC analytic functions on
$\cD$ is a $d'\times d$ NC analytic function on $\cD$.
\item
The product of two $d'\times d$ NC analytic functions on
$\cD$ is a $d'\times d$ NC analytic function on $\cD$.
\item
The composition of two NC analytic functions
is
an NC analytic function. More precisely, if
$f:\cD\to\cD '$ is a $d_1'\times d_1$ NC analytic function,
where $\cD '$ is an NC domain with $0\in\cD '$,
and $h$ is a $d_2'\times d_2$ NC analytic function on $\cD '$,
then $h\circ f$ is a $d_1'd_2'\times d_1 d_2$
NC analytic function on
$\cD$.
\een
\end{prop}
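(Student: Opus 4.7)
The plan for (i) is routine. If $f = \sum_w a_w w$ and $g = \sum_w b_w w$ have series radii $\ep_f, \ep_g > 0$, then $f+g = \sum_w (a_w + b_w) w$ converges on the NC $\min(\ep_f, \ep_g)$-neighborhood of $0$, and the composition-with-commutative-analytic property (item (2) of the definition) transfers trivially since a sum of commuting-variable analytic functions is analytic. For (ii), I form the Cauchy product $fg = \sum_v c_v v$ with $c_v = \sum_{w_1 w_2 = v} a_{w_1} b_{w_2}$ (finite for each $v$). Convergence on a common NC neighborhood follows from absolute convergence: for $\|X\| < \ep$ strictly smaller than both series radii, the submultiplicative bound $\|w(X)\| \le \|X\|^{|w|}$ (since $w(X)$ is a product of $|w|$ entries of $X$, each of norm at most $\|X\|$) lets me dominate the double sum by scalar geometric-type series, justifying rearrangement by homogeneous degree in $x$ and matching $f(X) \cdot g(X)$. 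Property (2) is again preserved since the ordinary product of two commuting-variable analytic functions is analytic.

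For (iii) the plan is to substitute the NC power series of $f$ into that of $h$ and control convergence by majorization. I assume $f(0) = 0$ (the relevant case for a power-series expansion at $0$). By the continuity of $f$ at $0$, I can choose $\delta > 0$ small so that for $X$ in the NC $\delta$-neighborhood of $0$, $f(X)$ lies strictly inside the series-convergence neighborhood of $h$. Write $h = \sum_u b_u u$ where $u$ ranges over words in the variables $y_{ij}$, $1 \le i \le d_1'$, $1 \le j \le d_1$. Each $u(f(X))$ is a product of entries of $f(X)$; by part (ii), each such product is NC analytic in $X$, and because every entry of $f$ vanishes at $0$, $u(f(X))$ is $O(\|X\|^{|u|})$. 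Consequently, every monomial of fixed degree $n$ in $x$ receives contributions from only finitely many $u$ (of length $\le n$), so reorganizing $\sum_u b_u \otimes u(f(X))$ by homogeneous degree in $x$ produces a well-defined NC power series. Convergence on the NC $\delta$-neighborhood follows by majorization: $\|h(f(X))\| \le \sum_u \|b_u\|\, \|f(X)\|^{|u|} < \infty$, and the rearranged double sum is dominated termwise in norm by a convergent scalar series.

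Property (2) for $h \circ f$ is then immediate: for any commuting-variable analytic $a : \cW \to \cD$, one has $(h \circ f) \circ a = h \circ (f \circ a)$; applying property (2) for $f$ gives that $f \circ a$ is analytic on $\cW$ with values in $\cD'$, and a second application for $h$ yields analyticity of $h \circ (f \circ a)$. The main obstacle, concentrated in (iii), is the combinatorial bookkeeping: justifying the reorganization of the double substitution into a single NC power series in $x$ and verifying its absolute convergence. The key technical inputs are the bound $\|w(X)\| \le \|X\|^{|w|}$ together with absolute convergence of matrix-coefficient NC power series strictly inside the series radius, i.e. the lemma on absolute convergence for bounded NC analytic functions with matrix coefficients referenced in the remark just before the proposition.
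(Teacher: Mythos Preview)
Your treatment of (i) and (ii) is fine and matches the paper, which simply calls these ``standard.'' For (iii) the paper likewise only cites \cite{KVV,Vo1} for the NC power series of the composition and remarks that property~(2) is easily checked; your associativity argument for property~(2) is exactly what is intended.

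There are two genuine issues in your handling of (iii). First, the proposition does not assume $f(0)=0$; it only requires $f(\cD)\subseteq\cD'$ with $0\in\cD'$, and the paper later applies this result (in the proof of Lemma~\ref{lem:linFrac1}) to a composition in which the inner function need not vanish at $0$. When $f(0)\ne 0$ one must re-expand $h$ about $f(0)$ before substituting, and in the NC setting this translation is exactly the nontrivial content deferred to \cite{KVV,Vo1}; your argument covers only a special case of the stated proposition. Second, your appeal to absolute convergence of individual monomials via $\|w(X)\|\le\|X\|^{|w|}$ and to the lemma mentioned in the remark is off target: that bound says nothing about the coefficients $a_w$, there are $(g'g)^{|w|}$ words of each length, and the lemma in question (Lemma~\ref{lem:abs-converge-contractive}) is specific to contraction-valued maps on $\cB_g$, is proved much later using the Fock-space model, and is not available here. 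The correct route is to work with homogeneous parts: for fixed $X$ with $\|X\|<\ep$, the one-variable series $\sum_\alpha t^\alpha f^{(\alpha)}(X)$ converges for $|t|$ up to $\ep/\|X\|>1$, forcing geometric decay of $\|f^{(\alpha)}(X)\|$ in $\alpha$, and this is what actually justifies the Cauchy-product and substitution rearrangements you outline.
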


\begin{proof}
Properties (i) and (ii) are standard and we only consider
(iii). The fact that $h\circ f$ admits an NC power series
as in
\eqref{eq:powerSeriesDef} was observed e.g. in 
\cite{KVV,Vo1}.
The composition property \eqref{it:opsub} of \S
\ref{subsec:anal} is easily checked.
\end{proof}

More is said about properties of NC analytic functions in
\S \ref{sec:NCanalAgain}.

\subsection{NC ball maps $f$ and their classification when $f(0)=0$}
 \label{subsec:main}
  A function $$f:\Int\Bgg \to\OS{d'}d$$ which is NC analytic
  will often be  called
 an {\bf NC analytic function on the ball $\Bgg$} and denoted
 $f:\Bgg \to\OS{d'}d$.  
  An NC analytic function
  $f : \Bgg \to\cB_{d'\times d}$ mapping the boundary to the
  boundary is called an {\bf NC ball map}.
 The notion of  $f$ mapping boundary to boundary is a bit
 complicated (because of convergence issues)
 so requires explanation.
  For a given $X\in\Bgg(N)$, define the function 
  $f_X:\mathbb D \to \OS{d'}d(N)$  by
  $z \mapsto f(zX)$.
  (Here $\mathbb D$ denotes the unit disc 
$\mathbb D=\{z\in\mathbb C \mid |z|<1\}$ in the complex plane.)
  If  $$\lim_{r \nearrow 1} f_X(r e^{it})$$ exists,  denote that limit by $f(e^{it}X)$.  
  The function $f$ {\bf maps the boundary to the boundary} if 
  whenever
  $\|X\|=1$ and $f(e^{it}X)$ exists, then 
$$
\| f(e^{it}X) \| = 1.
$$ 
Since $f$ is bounded,
Fatou's Theorem implies that for  each $X \in \Bgg$ the limit 
  $f_X(e^{it})=f(e^{it}X)$ exists for almost every $t$. 
  If $f$ is an NC ball map, $X\in\partial \Bgg$
  and $f(X)$ is defined, then a 
  (nonzero) vector $\gamma$ such that
  $\|f(X)\gamma\|=\|\gamma\|$ is called
   a {\bf binding vector} and this property
  {\bf binding}. 

Our main result on NC ball maps which map $0$ to $0$ is:

\begin{thm}
 \label{thm:ball-NO-L}
 Let $h:\Bgg \to\Bdd$ be
 an NC ball map with $h(0)=0$.
 Then
  there exist unitaries
  $U:\CC^{d} \to \CC^{d }$
  and $V:\CC^{d'}\to\CC^{d'}$ such that
  \beq
  \label{eq:ballDeal}
   h(x) =V \left[\begin{array}{cc} 
   x&0\\
   0&\tilde h(x)
   \end{array}\right] U^\T ,
   \eeq
   where $\tilde h:\Bgg\to\cB_{(d'-g')\times (d-g)}$ is an NC analytic contraction-valued map with
   $\tilde h(0)=0$.

   Conversely, every NC analytic
   $h$ satisfying
   \eqref{eq:ballDeal} for unitaries $U,V$
   and an NC analytic contraction-valued map $\tilde h$ fixing the origin,
   is an NC ball map $\Bgg \to \cB_{d'\times d}$
   sending $0$ to $0$.
   \end{thm}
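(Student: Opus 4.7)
The plan is to determine the block structure of $h$ in two stages: first pinning down the linear part $L:=h'(0)$, and then propagating the resulting structure to all higher-order terms.

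The NC Schwarz lemma (developed in \S\ref{sec:NCanalAgain}) gives complete contractivity of $L$: $\|L(X)\|\le\|X\|$ on $\Bgg$. To upgrade this to a block decomposition, I would exploit the boundary-to-boundary property: for each $X_0\in\partial\Bgg$ at which the nontangential limit $h(X_0)$ is defined (a set of full measure by Fatou's theorem, applicable via Lemma~\ref{lem:abs-converge-contractive}), $\|h(X_0)\|=1$, so $h(X_0)$ admits a binding vector $\gamma$. Analyzing the vector-valued disc slice $z\mapsto h(zX_0)\gamma$ via boundary Schwarz-Pick / Julia-Carath\'eodory rigidity extracts isometric information about the action of $L$ at $(X_0,\gamma)$. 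Patching this pointwise data across all boundary points of $\Bgg$ should reveal that $L$ carries an isometric subrepresentation equivalent to the canonical embedding $\Bgg\hookrightarrow\Bdd$; a Wold-type decomposition of the completely contractive linear NC map $L$ then produces unitaries $U\in\CC^{d\times d}$, $V\in\CC^{d'\times d'}$, together with a completely contractive linear NC map $L':\Bgg\to\cB_{(d'-g')\times(d-g)}$, such that
\[
V^\T L(x)\,U = \begin{bmatrix}x & 0 \\ 0 & L'(x)\end{bmatrix}.
\]

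Writing $V^\T h(x)U = \bigl[\begin{smallmatrix}A(x) & B(x) \\ C(x) & D(x)\end{smallmatrix}\bigr]$ with $A(x)=x+A_{\ge 2}(x)$, $D(x)=L'(x)+D_{\ge 2}(x)$, and $B,C$ of order $\ge 2$, the block form of the contraction inequality $(V^\T hU)^\T(V^\T hU)\preceq I$ yields $A^\T A + C^\T C \preceq I$. Since $A:\Bgg\to\cB_{g'\times g}$ is then an NC analytic contraction with $A(0)=0$ and $A'(0)=\mathrm{id}$, an NC Cartan-type uniqueness theorem (iterate $A$ and apply NC Cauchy estimates to the Taylor coefficients of $A^k$) forces $A(x)\equiv x$. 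Substituting back gives $C(X)^\T C(X)\preceq I-X^\T X$; evaluating on NC matrix isometries combined with the NC identity theorem then forces $C\equiv 0$. The symmetric analysis of $h(X)h(X)^\T\preceq I$ (using the analogous isometric dilation on the other side) gives $B\equiv 0$. Setting $\tilde h:=D$ completes the forward direction. The converse is immediate: for any NC analytic contraction $\tilde h$ with $\tilde h(0)=0$, the map $h(x):=V\bigl[\begin{smallmatrix}x & 0 \\ 0 & \tilde h(x)\end{smallmatrix}\bigr]U^\T$ satisfies $\|h(X)\|=\max(\|X\|,\|\tilde h(X)\|)$, which is $\le 1$ on $\Bgg$ and equals $1$ on $\partial\Bgg$.

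\textbf{Main obstacle.} The hardest step is extracting the global block structure of $L$ from the pointwise boundary-binding data. Pointwise slice rigidity alone only delivers one-dimensional isometric directions; the global algebraic decomposition requires coherently assembling binding vectors across all boundary points, exploiting the matrix-sized NC freedom (a Wold-style argument tailored to the NC ball setting rather than the classical commutative or scalar-disc case). Once the block form of $L$ is in place, the higher-order rigidity arguments (Cartan-type uniqueness for $A$, and dilation-density for $B$ and $C$) follow from more standard NC principles.
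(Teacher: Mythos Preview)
Your overall two-stage strategy---pin down the linear part, then propagate to higher order---matches the paper's. But the execution differs in both stages, and in the first stage you are working much harder than necessary.

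\medskip

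\textbf{The linear part.} Your ``main obstacle'' (extracting a global block structure for $L=h^{(1)}$ from pointwise Julia--Carath\'eodory data) dissolves once you notice the nilpotent trick of Proposition~\ref{prop:linIsom}: for any $X\in\partial\Bgg$, the block $\bigl[\begin{smallmatrix}0&X\\0&0\end{smallmatrix}\bigr]$ is nilpotent of order two and lies on $\partial\Bgg$, so by Lemma~\ref{lem:nil},
\[
h\!\left(\begin{bmatrix}0&X\\0&0\end{bmatrix}\right)=h^{(1)}\!\left(\begin{bmatrix}0&X\\0&0\end{bmatrix}\right)=\begin{bmatrix}0&h^{(1)}(X)\\0&0\end{bmatrix},
\]
and the ball-map property forces $\|h^{(1)}(X)\|=1$. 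Thus $h^{(1)}$ is a complete \emph{isometry} on the nose---no boundary Schwarz--Pick, no patching of binding vectors. The block form then follows from the structure theorem for complete isometries $\Mgg\to\Mdd$ (Theorem~\ref{thm:complete-iso-structure}), which is proved in the paper by testing against a handful of explicit matrices, not by a Wold decomposition.

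\medskip

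\textbf{Higher order.} The paper also avoids your Cartan iteration on $A$. After conjugating by $U,V$, one integrates the Schwarz inequality $\delta^2 I - h(\delta e^{i\theta}X)^\T h(\delta e^{i\theta}X)\succeq 0$ over $\theta$ to decouple homogeneous components:
\[
I - h^{(1)}(X)^\T h^{(1)}(X) - \sum_{\alpha\ge 2}\delta^{2\alpha-2}\,h^{(\alpha)}(X)^\T h^{(\alpha)}(X)\succeq 0.
\]
For each fixed $\alpha\ge2$, writing $h^{(\alpha)}=\bigl[\begin{smallmatrix}b_1&b_2\\b_3&b_4\end{smallmatrix}\bigr]$ and reading off the upper-left block gives $I-X^\T X - b_1^\T b_1 - b_3^\T b_3\succeq 0$. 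Now evaluate on the concrete ``universal'' tuples $\mathbb X_n$ (compressed shifts on truncated Fock space, Lemma~\ref{lem:funnyXn}); these satisfy $I-\mathbb X_n^\T\mathbb X_n=I_g\otimes(I-P_n)\otimes(I-Q_n)$, and Lemma~\ref{lem:uniqueS} immediately kills $b_1,b_3$. The transposed inequality kills $b_2$. So all three off-diagonal blocks and the $(1,1)$ higher-order term vanish simultaneously, with no separate Cartan step for $A$. Your dilation-density idea for $C$ is the right instinct---the paper's $\mathbb X_n$ are exactly the explicit test points that make it work---but the circle-averaging trick lets one treat each homogeneous degree independently and handle $b_1,b_2,b_3$ in one pass.
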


   The proof of the theorem is completed in \S \ref{sec:scottsPf}.
As an illustration of Theorem \ref{thm:ball-NO-L} we describe a special case.
For convenience, we adopt the notation $\cB_{g^\prime}$ for
  $\cB_{g^\prime\times 1}$. 

\begin{cor}\label{cor:bigDeal}
If $h: \cB_{g'}\to\cB_{d'}$ is an NC ball map with $h(0)=0$,
then $h$
is linear and there is a unique isometry $M\in\CC^{d'\times g'}$ such that $h=M\Vx $.
In particular, if $d'<g'$ then no such NC ball maps exist.
\end{cor}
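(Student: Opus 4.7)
The plan is to obtain Corollary \ref{cor:bigDeal} as a direct specialization of Theorem \ref{thm:ball-NO-L} to the case $g=d=1$. With these parameters one has $\Bgg=\cB_{g'\times 1}=\cB_{g'}$ and $\Bdd=\cB_{d'}$, so the hypotheses of the theorem are exactly those of the corollary. The main subtlety is interpreting the degenerate case $d-g=0$ in the block decomposition produced by the theorem.

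I would first invoke Theorem \ref{thm:ball-NO-L} to obtain unitaries $U\in\CC^{1\times 1}$, $V\in\CC^{d'\times d'}$ and an NC analytic contraction-valued map $\tilde h:\cB_{g'}\to\cB_{(d'-g')\times 0}$ with $\tilde h(0)=0$ such that
\[
h(x)=V\begin{bmatrix} x & 0\\ 0 & \tilde h(x)\end{bmatrix}U^{\T}.
\]
Since $d-g=1-1=0$, the right-hand block column has zero width and the map $\tilde h$ takes values in a trivial ($0$-column) space; in other words, the $\tilde h$ piece is vacuous. Writing $u:=U\in\CC$ with $|u|=1$, the decomposition collapses to
\[
h(x)=V\begin{bmatrix} x\\ 0\end{bmatrix}u,
\]
where the zero block has $d'-g'$ rows. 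This step forces the implicit dimension constraint $d'\ge g'$; if $d'<g'$, then the indicated block decomposition cannot exist, so no NC ball map $\cB_{g'}\to\cB_{d'}$ fixing $0$ can exist either. This handles the final sentence of the corollary.

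Next, partitioning $V=[\,V_1\ V_2\,]$ with $V_1\in\CC^{d'\times g'}$ consisting of the first $g'$ columns of $V$, I obtain $h(x)=(uV_1)x$. Setting $M:=uV_1$, the columns of $V_1$ are orthonormal (as a submatrix of the unitary $V$), and multiplication by the unimodular scalar $u$ preserves this, so $M$ is a $d'\times g'$ isometry. This gives linearity together with the existence of the desired isometry $M$.

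Uniqueness of $M$ is routine: if $Mx=M'x$ as NC analytic functions on $\cB_{g'}$, then evaluating both sides at scalar-entry tuples (for instance taking $X\in(\CC^{1\times 1})^{g'}$ equal to the $j$-th standard basis vector) recovers the $j$-th column of $M$ and of $M'$, forcing $M=M'$. Since the whole argument is essentially a bookkeeping exercise on the output of Theorem \ref{thm:ball-NO-L}, the only real care needed is to justify that the $\tilde h$ block drops out when $d-g=0$; everything else is formal.
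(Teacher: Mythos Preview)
Your proof is correct and follows essentially the same approach as the paper: apply Theorem \ref{thm:ball-NO-L} with $g=d=1$, observe that the $\tilde h$ column disappears since $d-g=0$, and read off $M$ as $V\begin{bmatrix}I\\0\end{bmatrix}$ times the unimodular scalar $U^{\T}$. You also supply the (routine) uniqueness argument and the $d'<g'$ impossibility, both of which the paper's terse proof leaves implicit.
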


\begin{proof}
When $h$ maps $\cB_{g'}$ to $\cB_{d'}$ then the $\tilde h(x)$ column is gone. 
Moreover, 
$$
M=V^\T\begin{bmatrix}I\\0\end{bmatrix}
$$
is an isometry.
\end{proof}

\subsection{NC Ball maps $f$ when $f(0)$ is not necessarily $0$}\label{subsec:nonzero}
  In the previous section we treated NC ball maps
  $f$ with $f(0)=0$, an assumption we drop in
  this section. The strategy is to compose $f$
  with a bianalytic automorphism of an NC ball
  to reduce the problem to the $f(0)=0$ setting. 
  \S \ref{sec:linearfrac} contains information
  on bianalytic mappings on a NC ball, while
  the main results appear in \S \ref{sec:classify}.

\subsubsection{Linear fractional transformations}
 \label{sec:linearfrac}
For a given $d^\prime \times d$ scalar matrix $v$ with $\|v\|<1$, define
$\cF_v:\Bdd\to \Bdd$ by 
\beq\label{eq:deffIntro}
 \cF_v (u):=v-(I_{d'}-vv^\T )^{1/2}u(I_d-v^\T u)^{-1}(I_d-v^\T v)^{1/2}.
\eeq
 Of course it must be shown that $\cF_v$ actually takes values
 in $\Bdd$. This is done in Lemma \ref{lemma:multilinfracIntro}
 below.

 Linear fractional transformations such 
 as $\cF_v$ are common in circuit and system theory,
since they are associated with energy conserving pieces of a circuit (cf.
\cite{Wo})

\begin{lem}\label{lem:linFrac1Intro}
 Suppose $\cD$ is an open
NC domain containing $0$. 
  If
  $u:\mathcal D\to \Bdd$ is NC analytic, then 
 $\cF_v (u(x))$ is an NC analytic function $($in $x)$ on $\cD$.
\end{lem}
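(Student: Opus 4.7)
The plan is to exhibit $\cF_v(u)$ as built from NC-analytic ingredients via operations that preserve NC analyticity, namely sums, products, and compositions (Proposition \ref{prop:anal}). First, the constant-valued functions $v$, $(I_{d'}-vv^\T)^{1/2}$, and $(I_d-v^\T v)^{1/2}$ are NC analytic; the square roots make sense because $\|v\|<1$ implies $I_{d'}-vv^\T$ and $I_d-v^\T v$ are strictly positive. Second, $u(x)$ is NC analytic by hypothesis, so $v^\T u(x)$ is too (product of a constant function and an NC analytic function).

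The main step is to show that $(I_d-v^\T u(x))^{-1}$ is NC analytic on $\cD$. Since $u$ takes values in $\Bdd$ we have $\|u(x)\|\le 1$ everywhere on $\cD$, and combined with $\|v\|<1$ this gives $\|v^\T u(x)\|\le\|v\|<1$ on $\cD$, ensuring that $I_d-v^\T u(x)$ is pointwise invertible. To obtain NC analyticity I introduce the auxiliary function
\[
F(y) := (I_d-v^\T y)^{-1} = \sum_{k=0}^\infty (v^\T y)^k,
\]
defined on the NC open set $\cU := \{y\in\OS{d}{d'} : \|v^\T y\|<1\}$. This $\cU$ is closed under direct sums and contains the NC $(1/\|v\|)$-neighborhood of $0$, hence is an NC domain containing $0$; moreover $\Bdd\subseteq\cU$. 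The Neumann series displayed above serves as the required convergent NC power series for $F$. For the composition condition (2) of \S\ref{subsec:anal}: if $a:\cW\to\cU$ is matrix analytic, then $I-v^\T a(\cdot)$ is matrix analytic and pointwise invertible on $\cW$, so its inverse $F\circ a$ is matrix analytic on $\cW$ and continuous up to $\pt\cW$ by the standard matrix-analytic version of the same fact (e.g.\ via Cramer's rule). Therefore $F$ is NC analytic.

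Since $u(\cD)\subseteq\Bdd\subseteq\cU$, Proposition \ref{prop:anal}(iii) yields that $F\circ u=(I_d-v^\T u)^{-1}$ is NC analytic on $\cD$. Combining this with parts (i) and (ii) of the same proposition, the expression
\[
\cF_v(u(x)) = v-(I_{d'}-vv^\T)^{1/2}\,u(x)\,(I_d-v^\T u(x))^{-1}\,(I_d-v^\T v)^{1/2}
\]
is NC analytic on $\cD$, as required. The only real technical obstacle in this plan is verifying the composition property (2) for the inverse function $F$; once that is in hand, the lemma follows immediately from Proposition \ref{prop:anal}.
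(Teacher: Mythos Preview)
Your proof is correct and follows essentially the same approach as the paper: reduce to Proposition~\ref{prop:anal} by noting that the constants and $u$ are NC analytic, and then handle the only nontrivial piece $(I_d-v^\T u)^{-1}$ as a composition with a geometric-series inverse. The paper factors this composition slightly differently---it composes the one-variable function $(1-z)^{-1}$ on $\DD$ with the $d\times d$ function $v^\T u$, whereas you compose $F(y)=(I_d-v^\T y)^{-1}$ with $u$---but this is a cosmetic difference, and your more explicit verification of the NC-analyticity of $F$ (Neumann series plus the composition property via matrix inversion) is a fine substitute for the paper's appeal to the well-known NC analyticity of $(1-z)^{-1}$.
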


\begin{proof}
See \S \ref{sec:lfl}.
\end{proof}

Notice that if $d=d'=1$, then $v$ is a scalar and $u$ is a scalar NC analytic
function, hence
$$ \cF_v (u)= (v-u)(1-u\bar v)^{-1}
= (1-u\bar v)^{-1}(v-u).$$
Now fix $v\in\DD$ and consider the map $\DD\to\CC$,
$
u\mapsto \cF_v (u).
$ 
This map is a
linear fractional map that maps the unit disc to the unit disc, maps
the unit circle to the unit circle, and maps $v$ to 0.

The geometric interpretation of the map in NC variables in
(\ref{eq:deffIntro}) is similar. Suppose  we fix $N\in \mathbb{N}$ and
$V\in  \Bdd(N)$ with $\| V\| <1$ and consider the map
\beq
\label{eq:lfIntro} U \mapsto \cF_V (U).
\eeq
The first part of Lemma
\ref{lemma:multilinfracIntro} tells us that the map defined in
\eqref{eq:lfIntro} maps the unit ball of $d'\times d$-tuples of $N \times
N$ matrices to the unit ball of $d'\times d$-tuples of $N\times N$ matrices
carrying the boundary to the boundary.  The third part of Lemma
\ref{lemma:multilinfracIntro} tells us that $\cF_V (V)=0$; that is, the
 map given in \eqref{eq:lfIntro} takes $V$ to 0.

\begin{lemma} \label{lemma:multilinfracIntro}
Suppose that  $N\in \mathbb{N}$ and $V\in\Bdd(N)$ with $\| V\|
<1$.
\begin{enumerate}[\rm (1)]
\item
$U\mapsto \cF_V(U)$ maps the $\Bdd(N)$ into itself
with boundary to the boundary.
\item If $U\in\Bdd(N)$, then
$\cF_V(\cF_V(U))=U.$
\item $\cF_V(V)=0$ and $\cF_V(0)=V$.
\end{enumerate}
\end{lemma}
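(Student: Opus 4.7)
The plan is to base everything on one Hermitian defect identity, from which parts (1)--(3) all follow in a routine way. Write $D_V:=(I_d-V^\T V)^{1/2}$ and $D_{V^\T}:=(I_{d'}-VV^\T)^{1/2}$; both are strictly positive since $\|V\|<1$. For any $U\in\Bdd(N)$ the factor $I_d-V^\T U$ is invertible (its norm is $<1$), so $\cF_V(U)$ is defined, and the whole argument reduces to verifying the identity
\beq\label{eq:defect}
 I_{d'}-\cF_V(U)^\T\cF_V(U)= D_V(I_d-U^\T V)^{-1}(I_d-U^\T U)(I_d-V^\T U)^{-1}D_V.
\eeq
The engine of the computation is the pair of intertwining identities
$VD_V=D_{V^\T}V$ and $V^\T D_{V^\T}=D_V V^\T$,
which follow by applying the power series of $\sqrt{1-t}$ to $V^\T V$ and $VV^\T$ together with the obvious identities $V(V^\T V)^k=(VV^\T)^kV$. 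The same trick gives the inverted versions with $D_V^{-1}$ and $D_{V^\T}^{-1}$.

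Granted \eqref{eq:defect}, part (1) is immediate: if $\|U\|\le 1$ the right-hand side is positive semidefinite, so $\|\cF_V(U)\|\le 1$; and since $D_V$ and $(I_d-V^\T U)^{-1}$ are invertible, the kernel of $I_{d'}-\cF_V(U)^\T\cF_V(U)$ is nontrivial iff the kernel of $I_d-U^\T U$ is nontrivial, i.e.\ $\|\cF_V(U)\|=1$ iff $\|U\|=1$. Part (3) is essentially a one-line computation: $\cF_V(0)=V$ by inspection, and
$$\cF_V(V)=V-D_{V^\T}V(I_d-V^\T V)^{-1}D_V = V-D_{V^\T}\bigl(D_{V^\T}^{-1}V\bigr) = 0,$$
using $V(I_d-V^\T V)^{-1/2}=D_{V^\T}^{-1}V$, which is the intertwining identity applied to the function $(1-t)^{-1/2}$.

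For part (2), set $W:=\cF_V(U)$ and compute $V^\T W$ by expanding and using $V^\T D_{V^\T}=D_V V^\T$:
$$V^\T W = V^\T V - D_V V^\T U(I_d-V^\T U)^{-1}D_V,$$
which after combining over a common denominator simplifies to
\beq\label{eq:IminusVW}
 I_d-V^\T W = D_V(I_d-V^\T U)^{-1}D_V,
\eeq
so $(I_d-V^\T W)^{-1}=D_V^{-1}(I_d-V^\T U)D_V^{-1}$. Substituting $W$ into $\cF_V(W)=V-D_{V^\T}W(I_d-V^\T W)^{-1}D_V$ and pushing all $D$'s through using the intertwining identities collapses the nested expression to $U$.

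The only real obstacle is the bookkeeping in \eqref{eq:defect} and the involution calculation: the identities are elementary but one must keep track of which defect ($D_V$ versus $D_{V^\T}$) appears on which side of each factor. An alternative, essentially equivalent, organization of the argument is to observe that the block matrix $\bigl[\begin{smallmatrix} D_{V^\T}^{-1} & VD_V^{-1}\\ V^\T D_{V^\T}^{-1} & D_V^{-1}\end{smallmatrix}\bigr]$ is $J$-unitary with $J=\bigl[\begin{smallmatrix}I&0\\0&-I\end{smallmatrix}\bigr]$ and that $\cF_V$ is the induced Möbius action on the operator ball, from which (1)--(3) are standard; but for a lemma of this size the direct computation above is cleaner.
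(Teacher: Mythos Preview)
Your proof is correct.  For part~(1) you organize around the Hermitian defect identity
\[
I_d-\cF_V(U)^\T\cF_V(U)= D_V(I_d-U^\T V)^{-1}(I_d-U^\T U)(I_d-V^\T U)^{-1}D_V,
\]
whereas the paper instead packages the same computation via the unitary Julia matrix
\[
M=\begin{bmatrix} (I-V^\T V)^{1/2} & -V^\T \\ V & (I-VV^\T)^{1/2}\end{bmatrix}
\]
and a system-theoretic input/output argument yielding $\|i_1\|^2-\|\cF_V(U)i_1\|^2=\|y\|^2-\|Uy\|^2$ for $i_1=D_V^{-1}(I-V^\T U)y$.  The two are equivalent: polarizing the paper's scalar identity over all $i_1$ recovers exactly your defect identity.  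Your route is algebraically more direct and immediately gives the stronger conclusion that the \emph{ranks} of $I-U^\T U$ and $I-\cF_V(U)^\T\cF_V(U)$ agree (this is the content the paper states separately as part of the more detailed Lemma~\ref{lemma:multilinfrac}).  The paper's approach, on the other hand, has the conceptual advantage of making the lossless/energy-preserving interpretation visible.

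For parts~(2) and~(3) your argument coincides with the paper's: the paper computes the same intermediate identity $I-V^\T F=D_V(I-V^\T U)^{-1}D_V$ (your \eqref{eq:IminusVW}) and then substitutes back, and your use of the intertwining relations $VD_V=D_{V^\T}V$, $V^\T D_{V^\T}=D_V V^\T$ is exactly how the paper's computations proceed.  One minor slip: in your displayed defect identity you wrote $I_{d'}$ where it should be $I_d$, since $\cF_V(U)^\T\cF_V(U)$ is $d\times d$ (at the $N$-level, $dN\times dN$).
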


\begin{proof}
See \S \ref{sec:lfl}.
\end{proof}

\subsubsection{Classification of NC Ball maps}
 \label{sec:classify}
   General NC ball maps - those where $f(0)$ is not necessarily
   $0$ - are described using the linear fractional transformation $\cF$.

\begin{thm}
 \label{thm:ball-NO-L-NO-0}
 Let $f:\Bgg \to\Bdd$ be
 an NC ball map with $f(0)\not\in\partial\Bdd$.
 Then
\beq\label{eq:bigDeal0fla}
f(x)= \cF_{f(0)}\big(\varphi(x) \big),
\eeq
where 
\beq\label{eq:bigDeal1fla}
\varphi(x)=\cF_{f(0)}\big(f(x)\big)=
V
\left[\begin{array}{cc} 
   x&0\\
   0&\tilde\varphi(x)
   \end{array}\right] U^\T 
\eeq
  for some unitaries
  $U:\CC^{d} \to \CC^{d }$
  and $V:\CC^{d'}\to\CC^{d'}$ and 
  an NC analytic contraction-valued map $\tilde\varphi$ with
  $\|\tilde\varphi(0)\|<1$.

   Conversely, every NC analytic
   $f$ satisfying
   \eqref{eq:bigDeal0fla} and \eqref{eq:bigDeal1fla} for unitaries $U,V$
   and $\tilde\varphi$ as above,
   is an NC ball map $f:\Bgg \to \cB_{d'\times d}$ with 
   $f(0)\not\in\partial\Bdd$.
\end{thm}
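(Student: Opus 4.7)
The plan is to reduce the general case to the previously proved Theorem \ref{thm:ball-NO-L} by composing $f$ with the linear fractional transformation $\cF_{f(0)}$. Since $f(0)\in\Bdd\setminus\partial\Bdd$ we have $\|f(0)\|<1$, so by Lemma \ref{lemma:multilinfracIntro}, $\cF_{f(0)}$ is a well-defined involution on $\Bdd$ (for each $N$), preserves the boundary, and sends $f(0)$ to $0$. The idea is that conjugating $f$ by this involution normalizes $f$ to have value $0$ at the origin, after which Theorem \ref{thm:ball-NO-L} applies directly.

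For the forward direction, define $\varphi(x):=\cF_{f(0)}(f(x))$. By Lemma \ref{lem:linFrac1Intro}, $\varphi$ is NC analytic on $\Bgg$, and by Lemma \ref{lemma:multilinfracIntro}(1) it is contraction-valued; by Lemma \ref{lemma:multilinfracIntro}(3) we have $\varphi(0)=\cF_{f(0)}(f(0))=0$. To verify that $\varphi$ maps the boundary to the boundary, fix $X\in\partial\Bgg$ and $t$ such that $f(e^{it}X)=\lim_{r\nearrow 1}f(re^{it}X)$ exists; since $\cF_{f(0)}$ is a rational expression with no singularities on $\Bdd$ (as $\|f(0)\|<1$ forces $I-f(0)^\T u$ invertible for all $\|u\|\le 1$), the radial limit $\varphi(e^{it}X)=\cF_{f(0)}(f(e^{it}X))$ also exists, and has norm $1$ by Lemma \ref{lemma:multilinfracIntro}(1) since $\|f(e^{it}X)\|=1$. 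Thus $\varphi$ is an NC ball map with $\varphi(0)=0$, and Theorem \ref{thm:ball-NO-L} yields the block decomposition in \eqref{eq:bigDeal1fla}. Applying $\cF_{f(0)}$ to both sides of $\varphi=\cF_{f(0)}\circ f$ and invoking the involution property (Lemma \ref{lemma:multilinfracIntro}(2)) produces \eqref{eq:bigDeal0fla}.

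For the converse, assume $f$ is given by \eqref{eq:bigDeal0fla} and \eqref{eq:bigDeal1fla}; in particular the parameter $v:=f(0)$ must satisfy $\|v\|<1$ for the formula $\cF_v$ to be defined. By the converse part of Theorem \ref{thm:ball-NO-L} together with Lemma \ref{lem:linFrac1Intro}, the map $\varphi$ is an NC analytic contraction on $\Bgg$ sending the boundary to the boundary. Hence $f=\cF_v\circ\varphi$ is NC analytic by Lemma \ref{lem:linFrac1Intro}, and Lemma \ref{lemma:multilinfracIntro}(1) applied with $V=v$ shows that $f$ sends $\Bgg$ into $\Bdd$ and the boundary into the boundary, so $f$ is an NC ball map. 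Finally, the consistency relation $v=\cF_v(\varphi(0))$ forces $\varphi(0)=\cF_v(v)=0$ (the outer factors $(I-vv^\T)^{1/2}$ and $(I-v^\T v)^{1/2}$ are invertible), so the parameterization is coherent, and $f(0)=v\in\Int\Bdd$, i.e.\ $f(0)\notin\partial\Bdd$.

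The main technical obstacle lies in the boundary-to-boundary verification: the definition of ``NC ball map'' involves radial limits of the slice functions $z\mapsto f(zX)$, and one must ensure that the composition with $\cF_{f(0)}$ interacts correctly with those limits. This is handled by exploiting the strict inequality $\|f(0)\|<1$, which keeps $\cF_{f(0)}$ uniformly continuous on the closed unit ball $\Bdd$ and thus intertwines the radial limits on either side. All other steps are straightforward manipulations built from Lemmas \ref{lem:linFrac1Intro} and \ref{lemma:multilinfracIntro} and the already established Theorem \ref{thm:ball-NO-L}.
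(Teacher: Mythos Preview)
Your proof is correct and follows essentially the same route as the paper: define $\varphi=\cF_{f(0)}\circ f$, observe via Lemmas \ref{lem:linFrac1Intro} and \ref{lemma:multilinfracIntro} that $\varphi$ is an NC ball map with $\varphi(0)=0$, apply Theorem \ref{thm:ball-NO-L}, and invert using the involution property. Your treatment is in fact more careful than the paper's, which glosses over the radial-limit compatibility you spell out explicitly.
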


\begin{proof}
Define $\varphi(x):=\cF_{f(0)}\big(f(x)\big)$. Then $\varphi(0)=0$.
By Lemma \ref{lem:linFrac1Intro}, $\varphi(x)$ is an NC analytic map.
Hence it is an NC ball map sending $0$ to $0$ and is thus classified 
by Theorem \ref{thm:ball-NO-L}. Moreover,
the equation \eqref{eq:bigDeal0fla} is implied by
Lemma \ref{lemma:multilinfracIntro}.(2). The converse easily follows from
Lemmas \ref{lem:linFrac1Intro} and \ref{lemma:multilinfracIntro}.
\end{proof}

The results of \S \ref{subsec:main} and \S \ref{subsec:nonzero} 
are treated in 
Part I of this paper. 

\subsection{More generality}

In this subsection we extend the main results presented so far
in two directions. The first concerns LMIs.
Our interest will be in properties of the set of all solutions
to a given  LMI.
In \S \ref{subsubsec:lmi} we will define what we mean by
an LMI,
then show that the set of solutions to a ``monic'' LMI
equals a general type of matrix ball we call a pencil ball.
Ultimately we would like to study maps from pencil balls to pencil
balls and
this
paper is a beginning
which
handles the special case where
the domain pencil ball is the ordinary NC ball $\Bgg$ (see Corollary
\ref{cor:bigDeal2}).
Eventually we hope
to understand which
 NC analytic change of variables takes one LMI to another.
Work is in progress on such problems.

In the next generalization we do not have applications in mind,
but do something that is mathematically natural.
A basic notion in several complex variables is the Shilov or distinguished
boundary. A natural problem is to classify NC analytic functions
mapping the ball to the ball and carrying the distinguished boundary to 
the boundary. Classification of linear maps of this type proves
to be an interesting challenge tackled in \S \ref{sec:linear-bininding} 
and \S \ref{sec:further}.
For NC analytic maps we introduce the semi-distinguished boundary (a set larger
than the distinguished boundary) and study the NC analytic
functions mapping the semi-distinguished boundary to the boundary.
All of this we only do for balls of vectors, rather than
balls of matrices, that is for $g=1$.

\subsubsection{Linear pencils}
Let
\beq
\label{eq:defLP}
L(x):=A_{11}x_{11}+\cdots+A_{g'g} x_{g'g}
\eeq
denote an {\bf NC analytic truly linear pencil}
in $x$. If the matrices
$A_{ij}$ that are used to define it are in $\CC^{d'\times d}$, then
$L(x)$ is called a $d'\times d$ linear pencil.
As an example, for $g'=2$ and $g=1$,
$$
A_{11}=\left[\begin{array}{cc}
1 & 2\\
3 & 4
\end{array}\right],\quad
A_{21}=\left[\begin{array}{cc}
0 & 1\\
-1 & 0
\end{array}\right],
$$
the linear pencil is
$$
L(x)=\left[\begin{array}{cc}
x_{11} & 2x_{11}+x_{21}\\
3x_{11}-x_{21}& 4 x_{11}
\end{array}\right].
$$

\subsubsection{Linear matrix inequalities and {\rm (}pencil{\rm )} balls}\label{subsubsec:lmi}

Let $\tL$ be a $d\times d$ monic symmetric linear pencil.
The positivity domain of $\tL$ is defined to be
$$
\cD_{\tL} := \{X\in\OS g{g'} \mid \tL (X)\succeq 0 \}.
$$
In other words, it is the set of all solutions to
the LMI $\tL (X)\succeq 0$.
We wish to analyze this solution set and we can using
results on balls which we have already obtained.
Now we describe $\cD_\tL$ as a type of ball.
To do this write $\tL$ as $\tL = I+L+L^\T$
where
$L$ is a $d\times d$ NC analytic truly linear pencil,
then
to $L(x)$ we associate the {\bf (pencil) ball}
\beq\label{eq:defBall1}
\cB_{L}:=\bigcup_{n=1}^\infty\left\{X \in \OS g{g'}(n)
\mid I_{dn}- L(X)^\T L(X) \succeq 0\right\}
=\bigcup_{n=1}^\infty
\left\{X\in \OS g{g'}(n) \mid \|L(X)\|\leq 1\right\}.
\eeq
Observe that $\Bgg= \cB_L$ for
$$L(x) = \sum_{i,j} E_{ij} x_{ij}$$
with $E_{ij}$ being the elementary $g'\times g$ matrix with $1$ located
at position $(i,j)$.

\begin{lem}\label{lem:dl}
For $X\in\OS g{g'}$,
\beq
\left[\begin{array}{cc}
0& X\\
0&0
\end{array}
\right]
\in\cD_{\tL} \quad \text{ iff }\quad X\in  \cB_{L}.
\eeq
Furthermore,
\beq
\left[\begin{array}{cc}
0&X\\0&0\end{array}\right]\in\pt\cD_{\tL} \quad \text{ iff }\quad
X\in\pt \cB_{L}.
\eeq
\end{lem}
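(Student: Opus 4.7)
The plan is to substitute the nilpotent block matrix $Y:=\bmat 0&X\\0&0\emat$ into the pencil, recognize that the evaluation has an especially simple block form, and then read off the two equivalences from the Schur complement.

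First I would verify the key block identity: since $L(x)=\sum_{i,j}A_{ij}x_{ij}$ is \emph{truly} linear (no constant term) and each entry of $Y$ has the form $Y_{ij}=\bmat 0 & X_{ij}\\ 0 & 0\emat$, linearity gives
\[
L(Y)\;=\;\sum_{i,j} A_{ij}\otimes Y_{ij}\;=\;\sum_{i,j}A_{ij}\otimes\Bigl(\bmat 0&1\\0&0\emat\otimes X_{ij}\Bigr).
\]
After the canonical shuffle (a permutation similarity) that swaps the middle two tensor factors, this is unitarily equivalent to $\bmat 0 & L(X)\\0&0\emat$ as a $2dn\times 2dn$ matrix. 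Consequently
\[
\tL(Y)\;=\;I+L(Y)+L(Y)^\T\;\cong\;\bmat I_{dn} & L(X)\\ L(X)^\T & I_{dn}\emat.
\]
This reduction is the only step that involves actual computation; writing out the Kronecker indices correctly is the main (minor) obstacle.

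Next, I would invoke the Schur complement. Because the $(1,1)$ block $I_{dn}$ is positive definite,
\[
\bmat I_{dn} & L(X)\\ L(X)^\T & I_{dn}\emat\succeq 0
\iff
I_{dn}-L(X)^\T L(X)\succeq 0
\iff
\|L(X)\|\le 1,
\]
so $Y\in\cD_\tL$ iff $X\in\cB_L$, which is the first assertion.

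For the boundary assertion I would refine the Schur complement to a strict inequality: the same block matrix is positive \emph{definite} iff $I_{dn}-L(X)^\T L(X)\succ 0$, i.e. iff $\|L(X)\|<1$. Thus $\tL(Y)$ is PSD but singular precisely when $\|L(X)\|=1$. Recalling that $\partial\cD_\tL$ consists of the points where $\tL$ is PSD but not PD, and that $\partial\cB_L$ consists of the points where $\|L(X)\|=1$, the second equivalence follows. The converse direction of each equivalence is automatic from the same chain of if-and-only-ifs, so no separate argument is required.
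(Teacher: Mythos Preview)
Your proposal is correct and follows exactly the same approach as the paper: evaluate $\tL$ at the nilpotent block $Y=\bmat 0&X\\0&0\emat$ to obtain (up to the canonical shuffle) the $2\times 2$ block matrix $\bmat I & L(X)\\ L(X)^\T & I\emat$, then read off both equivalences via the Schur complement. The paper's own proof records only this block identity and leaves the Schur-complement step implicit, so your version is simply a more detailed write-up of the same argument.
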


\begin{proof}
By definition,
$$\tL\left(\left[\begin{array}{cc}0&X\\0&0\end{array}\right]\right)
=\left[\begin{array}{cc}I&L(X)\\L(X)^\T &I\end{array}\right]
.
$$
\end{proof}

\subsubsection{Pencil ball maps}

Now we turn to 
$\Bgg\to\cB_L$ maps.
  As a generalization of NC ball map, given
  a linear pencil $L$, an NC analytic
  mapping $f:\Bgg \to \cB_L$ will be called a {\bf pencil ball map}
  provided $\|L(f(X))\|= 1$, whenever 
  $\|X\|=1$ and $f(X)$ is defined. 
Lemma \ref{lem:dl} tells us that understanding 
pencil ball maps is equivalent to understanding maps on the sets
of solutions to certain types of LMIs.
  
\begin{thm}
 \label{thm:ball}
Let $L$ be a
$d'\times d$ NC analytic truly linear pencil and
$f:\Bgg \to\cB_{L}$
a pencil ball map with $f(0)=0$.
Write $h:=L\circ f$.
Then
 there exist unitaries
$U:\CC^{d} \to \CC^{d }$
and $V:\CC^{d'}\to\CC^{d'}$ such that
\beq
\label{eq:ballDealCOR}
 h(x) =V \left[\begin{array}{cc} 
x&0\\
0&\tilde h(x)
\end{array}\right] U^\T ,
\eeq
where $\tilde h$ is an NC analytic contraction-valued map.
\end{thm}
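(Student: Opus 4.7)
The plan is to reduce Theorem \ref{thm:ball} to the already-proved Theorem \ref{thm:ball-NO-L} by observing that the composition $h := L \circ f$ is itself an NC ball map $\Bgg\to\Bdd$ sending $0$ to $0$. All the hard analytic content has been packaged into Theorem \ref{thm:ball-NO-L}; what remains is to check the hypotheses carefully for $h$.

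First I would verify that $h$ is NC analytic on $\Bgg$. Since $L$ is a truly linear pencil it is an NC polynomial with no constant term (in particular an NC analytic function on all of $\OS d{d'}$), so Proposition \ref{prop:anal}(iii) gives that $h = L\circ f$ is NC analytic on $\Bgg$. Because $L$ has no constant term and $f(0)=0$, we get $h(0) = L(f(0)) = 0$.

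Next I would check the ball and boundary conditions. For every $X\in\Bgg$ the image $f(X)$ lies in $\cB_L$, so by the definition \eqref{eq:defBall1} of the pencil ball we have $\|L(f(X))\|\leq 1$, i.e. $h(X)\in\Bdd$. For the boundary-to-boundary property, fix $X$ with $\|X\|=1$ and suppose $h(e^{it}X)$ is defined, meaning $\lim_{r\nearrow 1} h_X(re^{it})$ exists. Since $L$ is linear and hence continuous, this limit exists precisely when $f(e^{it}X)$ exists, and in that case $h(e^{it}X)=L(f(e^{it}X))$. The hypothesis that $f$ is a pencil ball map then gives $\|L(f(e^{it}X))\|=1$, so $\|h(e^{it}X)\|=1$. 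Thus $h:\Bgg\to\Bdd$ is an NC ball map in the sense of \S \ref{subsec:main} with $h(0)=0$.

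Finally, I would invoke Theorem \ref{thm:ball-NO-L} applied to $h$ to obtain unitaries $U,V$ and an NC analytic contraction-valued map $\tilde h$ (with $\tilde h(0)=0$) yielding the decomposition \eqref{eq:ballDealCOR}. There is no real obstacle here: the theorem is essentially a corollary, and the only bookkeeping lies in confirming that composition with the linear map $L$ preserves both the NC analyticity and the boundary-behavior needed to trigger Theorem \ref{thm:ball-NO-L}.
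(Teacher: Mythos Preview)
Your proposal is correct and matches the paper's own proof, which is the one-line ``Follows easily by applying Theorem \ref{thm:ball-NO-L} to $h$.'' You have simply spelled out the verification that $h=L\circ f$ is an NC ball map with $h(0)=0$; the only minor overstatement is the word ``precisely'' (continuity of $L$ gives that existence of $f(e^{it}X)$ implies existence of $h(e^{it}X)$, not the converse), but only the forward implication is needed.
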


\begin{proof}
Follows easily by applying Theorem \ref{thm:ball-NO-L} to
$h$.
\end{proof}

\begin{cor}\label{cor:bigDeal2}
Let $L$ be a $d'\times d$ NC analytic truly linear 
pencil and $f:
\Bgg\to\cB_{L}$ a pencil ball map with 
$\|L\circ f(0)\|<1$.
Then
\beq\label{eq:bigDeal2}
L\circ f(x)= \cF_{L\circ f(0)}\big(\varphi(x) \big),
\eeq
where $\varphi(x)=\cF_{L \circ f(0)}\big(L\circ f(x)\big)$ is an NC ball map
$\Bgg\to\cB_{d'\times d}$
taking $0$ to
$0$ and is therefore completely described by Theorem {\rm \ref{thm:ball-NO-L}}.
\end{cor}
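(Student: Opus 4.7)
The plan is to reduce to Theorem \ref{thm:ball-NO-L-NO-0} by passing to the composed map $h := L\circ f$ and observing that all the pencil-ball hypotheses on $f$ translate cleanly into ordinary NC-ball hypotheses on $h$.

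First I would verify that $h:\Bgg\to\cB_{d'\times d}$ is itself an NC ball map. NC analyticity is immediate from Proposition \ref{prop:anal}(iii) since $L$ is a (truly linear) NC analytic map and $f$ is NC analytic. The inclusion $f(\Bgg)\subseteq\cB_L$ unpacks, via the definition \eqref{eq:defBall1} of $\cB_L$, to $\|L(f(X))\|\le 1$, i.e.\ $h(X)\in\cB_{d'\times d}$. The boundary condition in the definition of a pencil ball map says $\|L(f(X))\|=1$ whenever $\|X\|=1$ and $f(X)$ is defined, which is exactly the statement that $h$ sends the boundary of $\Bgg$ into $\partial\cB_{d'\times d}$. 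So $h$ is an NC ball map in the sense of \S\ref{subsec:main}.

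Next, set $v:=h(0)=L\circ f(0)$; by hypothesis $\|v\|<1$. Define $\varphi(x):=\cF_v(h(x))$. Lemma \ref{lem:linFrac1Intro} gives NC analyticity of $\varphi$ on $\Bgg$. Lemma \ref{lemma:multilinfracIntro}(1) says $\cF_v$ maps $\cB_{d'\times d}(N)$ into itself with boundary to boundary; composing with $h$ (which already has these properties) shows $\varphi:\Bgg\to\cB_{d'\times d}$ is again an NC ball map. Lemma \ref{lemma:multilinfracIntro}(3) gives $\varphi(0)=\cF_v(v)=0$, so $\varphi$ falls under the scope of Theorem \ref{thm:ball-NO-L}.

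Finally, to recover \eqref{eq:bigDeal2}, invoke the involution identity Lemma \ref{lemma:multilinfracIntro}(2): $\cF_v(\cF_v(u))=u$ for $u\in\Bdd(N)$. Applied with $u=h(x)$ this yields
\[
\cF_v(\varphi(x))=\cF_v\bigl(\cF_v(h(x))\bigr)=h(x)=L\circ f(x),
\]
which is precisely \eqref{eq:bigDeal2}. There is no real obstacle here: the only point that requires a moment's care is the initial bookkeeping check that a pencil ball map $f$ composed with $L$ yields a genuine NC ball map, after which the corollary is just Theorem \ref{thm:ball-NO-L-NO-0} applied to $h$.
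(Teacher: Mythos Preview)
Your proposal is correct and follows exactly the paper's approach: the paper's proof is the one-line ``Apply Theorem~\ref{thm:ball-NO-L-NO-0} to $L\circ f(x)$,'' and you have simply unpacked that application (verifying $h=L\circ f$ is an NC ball map, then invoking Lemmas~\ref{lem:linFrac1Intro} and~\ref{lemma:multilinfracIntro} as in the proof of Theorem~\ref{thm:ball-NO-L-NO-0}).
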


\begin{proof}
Apply Theorem \ref{thm:ball-NO-L-NO-0} to $L\circ f(x)$.
\end{proof}

\subsubsection{Semi-distinguished pencil ball maps}
 \label{subsec:distinguished}
Many of our proofs with little extra effort work for a class of functions
more general than pencil ball maps. These involve the notion of distinguished
boundary which we now define.

The Shilov boundary or {\bf distinguished boundary} of
$\Bgg(N)$ is the smallest closed subset $\Delta$ of
$\Bgg(N)$ with the following property: 
For $f: \Bgg(N) \to \CC^K$ analytic and continuous
to the boundary $\partial \Bgg(N)$,
for any $X \in \Bgg(N) $ we have
\beq
\|f(X) \| \leq \max_{U \in \Delta} \| f(U) \|.
\eeq
In other words, the maximum of $f$ over  $\Bgg(N)$ occurs in
the distinguished boundary. We refer the reader to
\cite[p. 145]{Kr} or \cite[Ch. 4]{He} for more details.

  It is a Theorem \cite[p. 77]{Ab} that the distinguished boundary
  of $\cB_{g}(N)$ is $$\{X\in\cB_g(N)\mid X^\T X=I\}.$$ Accordingly,
  we let $\partial_\db \cB_g$ denote the
  disjoint union of these distinguished boundaries
  and call this the distinguished boundary of $\cB_g$.  
  A further discussion of distinguished boundaries
  for $\Bgg$ is in \S \ref{subsec:distinguished-boundary}.

An NC analytic function $f:\cB_{g}\to\cB_{L}$
satisfying $f(0)\not\in \pt\cB_{L}$
and
\beq\label{eq:defDistMap}
f\left( \pt_\db \cB_{g} \right) \subseteq \pt\cB_{L}
\eeq
is called a {\bf distinguished pencil ball map}.
Here, \eqref{eq:defDistMap} means that for every isometry $X$
for which $\lim\limits_{\delta\nearrow 1} f(\delta X)$ exists,
this limit  lies in $\pt\cB_L$.

A natural open question is: classify distinguished pencil ball maps.
Our proof of Theorem \ref{thm:bidisk}
does something like this but a little weaker.
A key distinction between the semi-distinguished 
maps and the case treated
earlier in Theorems \ref{thm:ball-NO-L} and \ref{thm:ball}
occurs with \emph{linear} distinguished ball maps.
These we find much harder to classify than linear NC ball maps,
which we leave as an interesting open question.

\begin{defi}
The {\bf semi-distinguished boundary} of $\cB_{g'}$ is
defined to be
$$
\partial_{\db}^{1/2} \cB_{g'}:=
\bigcup_{n=1}^\infty \left\{ X\in \cB_{g'}(n)  \mid
X^\T X \text{ is a projection of dimension } \geq  \frac 12n \right\}.
$$
An NC analytic function $f:\cB_{g'}\to\cB_{L}$
satisfying
$f(0)\not\in \pt\cB_{L}$ and
\beq\label{eq:defSemiDistMap}
f\left( \pt_\db^{1/2} \cB_{g'} \right) \subseteq \pt\cB_{L}
\eeq
is called a {\bf semi-distinguished pencil ball map}.
Here, \eqref{eq:defSemiDistMap} means that for every $X\in \pt_\db^{1/2} \cB_{g'}$
for which $\lim\limits_{\delta\nearrow 1} f(\delta X)$ exists,
this limit  lies in $\pt\cB_L$.
\end{defi}

The study of semi-distinguished pencil ball
maps is the subject of Part II of this article. 
For semi-distinguished pencil ball maps we get a weak version
of the pencil ball map classification Theorem \ref{thm:ball} --
see Theorem \ref{thm:bidisk}.


\part*{\centerline{Part I. Binding}}
 \label{partI}
\section{Models for NC contractions}
 \label{sec:model}
  Let $S$ denote the ($g'$-tuple of) shift(s) on {\bf noncommutative
  Fock space} $\mathcal F_{g'}$. The Hilbert space $\mathcal F_{g'}$
  is the Hilbert space with orthonormal basis
  consisting of words $\ax$ in $g'$ NC variables
  $x=(x_1,\dots,x_{g'})$. Then $S_j w=x_jw$ for a word $w\in\ax$ and
  $S_j$ extends by linearity and continuity to $\mathcal F_{g'}$.
  The key properties we need about $S$ are:
 \begin{equation}
  \label{eq:key-thing}
  \begin{split}
    S_j^{\T} S_\ell & =  \delta_{j}^{\ell} I \qquad \text{for  } j,\ell=1,\ldots, g' \\
    I-\sum_{j=1}^{g'} S_j S_j^\T  & =  P_0,
  \end{split}
 \end{equation}
  where $P_0$ is the (rank one) projection onto the span of
  the empty word.

  A {\bf column contraction} is a $g'$-tuple of square matrices (operators),
 \begin{equation*}
   X=\begin{bmatrix} X_1 \\ \vdots \\ X_{g'} \end{bmatrix}
 \end{equation*}
  such that $I-X^\T X=I-\sum X_j^\T  X_j \succeq 0$.
If $X$ acts on finite dimensional space, then
$X$ is a column contraction if and only if $X\in\cB_{g'}$. In general, $X$
is a column contraction if and only if $X^\T $ is a row
  contraction. Row contractions (and so column contractions too)
  are well studied -- e.g.~by Popescu and also Arveson.
 A {\bf strict column
  contraction} is a column contraction $X$ for which there
  is an $\eps >0$ such that
  $I-\sum X_j^\T  X_j \succeq \eps.$ If $X$ is acting on a finite
  dimensional space, this last condition is equivalent to
  $I-\sum X_j^\T  X_j \succ 0$, i.e., $X\in\Int\cB_{g'}$.
  Column contractions are modeled by $S^\T $, which is
  the content of Lemma \ref{lem:model}
  below and a major motivation for these definitions. We do not
use this property of the $S_j$ until proving Theorem \ref{thm:schwarz-lemma}.

 \begin{lem}[\cite{F},\cite{Pop0}]
  \label{lem:model}
    If $X$ is a strict column contraction acting on a Hilbert space
   $\mathcal H$, then there is a Hilbert space $\mathcal K$ and an
   isometry $V:\mathcal H \to \mathcal K \otimes  \mathcal F_{g'}$
   such that $VX = (I \otimes S^\T) V$; i.e., for each $j$,
   $VX_j = (I\otimes  S_j^\T)V$ and in particular, for each
   word $w\in\ax$, $V w(X) = (I\otimes w(S^\T)) V$.
    Here $I$ is the identity on $\mathcal K$. Further, if $X\in\cB_{g'}(N)$
   $($so is a tuple of matrices$)$, then the dimension of $\mathcal K$
    can be assumed to be at most $N$.
\end{lem}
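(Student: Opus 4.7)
The plan is to construct $V$ explicitly via a ``defect space'' dilation in the style of Frazho and Popescu. Set $D := (I - X^\T X)^{1/2}$; strictness of $X$ means $D^2 \succeq \eps I$. Take $\mathcal K := \overline{\range D}$, a closed subspace of $\mathcal H$ (so $\dim \mathcal K \le \dim \mathcal H$), and define
\[
V \colon \mathcal H \to \mathcal K \otimes \mathcal F_{g'}, \qquad
V h \, := \, \sum_{w \in \ax} \bigl( D\, \tilde w(X)\, h \bigr) \otimes w,
\]
where $\tilde w$ denotes the reversal of the word $w$. The reversal is the crucial combinatorial choice: $I\otimes S_j^\T$ strips a leading $x_j$ from every word in the Fock expansion, so $(I \otimes S_j^\T) Vh$ has $w'$-coefficient $D\,\widetilde{x_j w'}(X) h = D\,\tilde w'(X) X_j h$, which is exactly the $w'$-coefficient of $V X_j h$. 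Hence $V X_j = (I \otimes S_j^\T) V$, and iterating yields $V w(X) = (I \otimes w(S^\T)) V$ for every $w \in \ax$.

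The substantive step is to show $V$ is isometric (which simultaneously establishes convergence). Introduce the completely positive map $\varphi(T) := \sum_{j=1}^{g'} X_j^\T T X_j$. A direct index calculation shows $\sum_{|w|=k} \tilde w(X)^\T \, T\, \tilde w(X) = \varphi^k(T)$ for all $T$, and consequently
\[
\sum_{|w|=k} \tilde w(X)^\T D^2 \tilde w(X) \;=\; \varphi^k(I) - \varphi^{k+1}(I).
\]
Summing over $|w| \le N$ telescopes to $I - \varphi^{N+1}(I)$, giving $\sum_{|w| \le N} \| D\, \tilde w(X) h\|^2 = \bigl\langle (I - \varphi^{N+1}(I)) h,\, h \bigr\rangle$. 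The main obstacle is verifying that $\varphi^n(I) \to 0$, and this is exactly where strictness is essential: $\varphi(I) = X^\T X \preceq (1-\eps) I$, and monotonicity of $\varphi$ gives $\varphi^{n}(I) \preceq (1-\eps)\varphi^{n-1}(I) \preceq \cdots \preceq (1-\eps)^n I \to 0$. Letting $N \to \infty$ yields $\|Vh\|^2 = \|h\|^2$, so $V$ is isometric and the defining series converges unconditionally when grouped by homogeneous degree.

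Finally, when $\mathcal H = \CC^N$ (i.e.\ $X \in \cB_{g'}(N)$), $\mathcal K = \overline{\range D}$ sits inside $\CC^N$ and so has dimension at most $N$, giving the last assertion. The only genuine input is the identification of the correct coefficient $D\tilde w(X) h$: without the reversal the intertwining fails, and without the factor $D$ the formula is not isometric. Once those choices are in place, the rest of the verification is a telescoping calculation powered by strictness of the column contraction.
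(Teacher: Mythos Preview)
The paper does not prove this lemma; it is quoted from the literature (Frazho and Popescu) without argument. Your construction is correct and is exactly the standard Frazho--Popescu Poisson-kernel dilation: the defect operator $D=(I-X^\T X)^{1/2}$, the map $Vh=\sum_w D\,\tilde w(X)h\otimes w$, the telescoping identity $\sum_{|w|\le N}\tilde w(X)^\T D^2\tilde w(X)=I-\varphi^{N+1}(I)$, and the use of strictness to force $\varphi^n(I)\to 0$. One minor remark: in the strict case $D$ is actually invertible, so $\mathcal K=\mathcal H$ exactly (not merely $\dim\mathcal K\le\dim\mathcal H$), but this of course still gives the stated bound $\dim\mathcal K\le N$ when $\mathcal H=\CC^N$.
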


  A natural generalization of the $g^\prime$-tuple of shifts on
  Fock space to the $\OS{g}{g^\prime}$ and its (sequence of) ball(s)
  is 
$$
 \mathbb X =\begin{bmatrix} S_j^{\T} \otimes S_\ell \end{bmatrix}_{j,\ell=1}^{g^\prime,g}
$$
  (A word of {\it caution}: we have abused notation by using $S_j$ to denote
  shifts on both $\Fgp$ and $\Fg$.)  The operator $\mathbb X$
  should be compared to the {\it reconstruction operator} in \cite{Pop4}.

  Though we do not know if $\mathbb X$ serves as a universal model 
  for $\Bgg$ in the same way that $S$ does for $\Bg$,
  it does serve as a type of boundary for NC analytic functions.  
  The statement of the results requires approximating $\mathbb X$
  by matrices. The operator (not matrix) $\mathbb X$
  acts upon $\Fgg$ -- the Hilbert space with orthonormal basis
  consisting of words in $g'g$ NC variables
  $x=(x_{j,\ell})_{j,\ell =1}^{g',g}$.
 Given a natural number $n$, let 
  $\Fgn$ denote the span of words of length at most $n$ in $\Fg$, 
  and set $\Fggn =\Fgpn\otimes \Fgn$.  Let $\mathbb X_n$
  denote the compression of $\mathbb X$ to the (semi-invariant 
finite dimensional) subspace
  $\Fggn$.

\begin{lemma}
 \label{lem:funnyXn}
    Let $P_n$ denote the projection onto the complement of the
    span of $\emptyset$ in $\Fgpn$ $($and also in $\Fgn )$
    and let $Q_n$ denote the projection onto the complement
    of the span of $\{w \mid w \text{ is a word of length } n \}$
    in $\Fgpn$ $($and also in $\Fgn )$. Then:
\begin{equation}
  \label{eq:key-funny}
 \begin{split}
  \mathbb X_n^{\T} \mathbb X_n &=  I_g \otimes P_n\otimes Q_n, \\
  \mathbb X_n  \mathbb X_n^{\T} &=  I_{g^\prime} \otimes Q_n\otimes P_n.
 \end{split}
\end{equation}
\end{lemma}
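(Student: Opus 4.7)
The plan is to unwind the definitions and reduce everything to a direct block computation. The first step is to describe the compression $\mathbb{X}_n$ explicitly. Since $S_j^\T$ (backward shift on $\mathcal F_{g'}$) strictly decreases word length, it leaves $\mathcal F_{g'}(n)$ invariant, so its compression agrees with its restriction. On the other hand $S_\ell$ increases length by $1$, so its compression to $\mathcal F_g(n)$ annihilates words of length exactly $n$ and acts as the ordinary shift on shorter words. Writing $\tilde S_j$ and $\tilde S_\ell$ for these truncated shifts on $\mathcal F_{g'}(n)$ and $\mathcal F_g(n)$ respectively, I would argue that compression of a tensor product of operators, one of which already preserves its truncated factor, is just the tensor product of the individual compressions; hence
\[
(\mathbb{X}_n)_{j,\ell} \;=\; \tilde S_j^{\T} \otimes \tilde S_\ell .
\]

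Next I would verify the two key identities satisfied by the truncated shifts. The relation $S_j^{\T} S_\ell = \delta_{j\ell} I$ from \eqref{eq:key-thing} becomes, on the truncated space, $\tilde S_j^{\T}\tilde S_\ell = \delta_{j\ell}\, Q_n$: indeed, for a word $w$ of length $<n$ one has $\tilde S_\ell w = x_\ell w$ and $\tilde S_j^{\T}(x_\ell w)=\delta_{j\ell} w$, while for $|w|=n$ one has $\tilde S_\ell w=0$. Similarly $\sum_j \tilde S_j\tilde S_j^{\T} = P_n$, because a word $w\neq\emptyset$ can uniquely be written $w=x_\ell v$ with $|v|<n$ and is reproduced by the $\ell$-th summand, while the empty word is annihilated.

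With these identities in hand the lemma follows from an entry-by-entry calculation. For \eqref{eq:key-funny} I would compute
\[
(\mathbb{X}_n^{\T}\mathbb{X}_n)_{\ell,\ell'}
= \sum_j (\tilde S_j\otimes \tilde S_\ell^{\T})(\tilde S_j^{\T}\otimes \tilde S_{\ell'})
= \Bigl(\sum_j \tilde S_j\tilde S_j^{\T}\Bigr)\otimes (\tilde S_\ell^{\T}\tilde S_{\ell'})
= \delta_{\ell\ell'}\,P_n\otimes Q_n ,
\]
which is exactly the first claim. Symmetrically,
\[
(\mathbb{X}_n\mathbb{X}_n^{\T})_{j,j'}
= \sum_\ell (\tilde S_j^{\T}\otimes \tilde S_\ell)(\tilde S_{j'}\otimes \tilde S_\ell^{\T})
= (\tilde S_j^{\T}\tilde S_{j'})\otimes \Bigl(\sum_\ell \tilde S_\ell \tilde S_\ell^{\T}\Bigr)
= \delta_{jj'}\,Q_n\otimes P_n ,
\]
yielding the second identity.

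There is no substantial obstacle here; the only thing to be careful about is bookkeeping, namely tracking which $P_n, Q_n$ lives on $\mathcal F_{g'}(n)$ and which on $\mathcal F_g(n)$, and confirming at the outset that compression interacts correctly with the tensor product structure so that $(\mathbb{X}_n)_{j,\ell}=\tilde S_j^{\T}\otimes\tilde S_\ell$ really holds on $\mathcal F_{g'\times g}(n)=\mathcal F_{g'}(n)\otimes\mathcal F_g(n)$.
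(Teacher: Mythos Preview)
Your proof is correct, and in fact the paper states this lemma without proof, treating it as a routine verification. Your approach---identifying $(\mathbb X_n)_{j,\ell}$ as the tensor product of the individually compressed shifts, establishing the truncated versions $\tilde S_j^{\T}\tilde S_\ell=\delta_{j\ell}Q_n$ and $\sum_j \tilde S_j\tilde S_j^{\T}=P_n$ of the Fock relations \eqref{eq:key-thing}, and then computing block by block---is exactly the intended direct computation. One minor remark: the fact that compression to a tensor product of subspaces is the tensor product of the compressions is automatic (it does not require one factor to be invariant), so your closing caveat is unnecessary; the only place invariance is used is in identifying the compression of $S_j^{\T}$ with its restriction.
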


\begin{rem}\rm
 \label{rem:analyticgg}
   In view of the definition of $\Bgg$, it is natural to think of
   an NC analytic function $h$ on $\Bgg$ as a function of the $g^\prime g$ variables $x_{j,\ell}$,
   $1\le j\le g^\prime$ and $1\le \ell \le g$.  In turn, 
   a monomial $m$ in $(x_{j,\ell})$ can be viewed as a {\it homogeneous}
   monomial  $u\otimes v$,
   where $u$ and $v$ are monomials of the same length (same as the length of $m$)
   and $u$ and $v$ monomials in NC variables $y_j$ ($1\le j\le g^\prime$)
    and $z_\ell$ ($1\le \ell \le g$) respectively.  In this way,
\begin{equation*}
    h= \sum_\alpha \sum_{|u|=|v|=\alpha} a_{u\otimes v} u\otimes v 
     = \sum_\alpha h^{(\alpha)}.
\end{equation*}
  For instance, the monomial $x_{23}x_{41}$ is identified with
  $y_2y_4 \otimes z_3z_1$.
\end{rem}

We want to evaluate NC analytic functions $\Bgg\to\OS{d}{d'}$ 
on $\mathbb X_n$,
which is a norm one matrix thereby causing power series convergence difficulties. However,
 evaluating NC analytic functions on \emph{nilpotent} tuples $X\in\Bgg$
 behaves especially well.
Here a tuple $X$ is called {\bf nilpotent}
 of order $\beta$ if $w(X)=0$ for every word $w$ of
 length $\geq \beta $.

\begin{lem}
 \label{lem:nil}
   If $f:\Bgg \to \OS{d'}d$ is NC analytic and $X\in\Bgg$ is
   nilpotent of order $\beta$, then $f(X)$ is defined and
   moreover,
 $$
  f(X)=\sum_{\alpha \le \beta} f^{(\alpha)}(X).
 $$ 
  In particular, if $f$ is an NC ball map, $f(0)=0$, and $Y\in\partial \Bgg$
  is nilpotent of order two, then
 $$
   f(Y)=f^{(1)}(Y).
 $$
\end{lem}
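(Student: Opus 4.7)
The plan is to fix the nilpotent $X$ and slide a complex scalar $z$ through the unit disc, reducing the question to a one-variable analytic statement. Let $X\in\Bgg$ be nilpotent of order $\beta$ and set $a(z):=zX$. For $z\in\mathbb D$ we have $\|zX\|<1$, so $a$ maps $\mathbb D$ into $\Int\Bgg$, and property (ii) of the NC analytic function definition implies that $g(z):=f(zX)=f\circ a(z)$ is analytic on $\mathbb D$ and continuous to $\partial\mathbb D$.

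Next I would show that $g$ is in fact a polynomial in $z$ of degree at most $\beta-1$. On the small disc $\{|z|<\varepsilon/\|X\|\}$, where $\varepsilon$ is the series radius of $f$, the NC power series expansion of $f$ (in the homogeneous-grading order dictated by \eqref{eq:order}) gives
\[
g(z)\;=\;\sum_{\alpha=0}^\infty z^\alpha f^{(\alpha)}(X).
\]
Since $w(X)=0$ whenever $|w|\geq\beta$, every homogeneous piece $f^{(\alpha)}(X)$ with $\alpha\geq\beta$ vanishes; so on this small disc $g(z)=\sum_{\alpha<\beta}z^\alpha f^{(\alpha)}(X)$. The two analytic functions $g$ and $z\mapsto\sum_{\alpha<\beta}z^\alpha f^{(\alpha)}(X)$ agree on a nonempty open subset of $\mathbb D$, hence by the identity theorem they agree on all of $\mathbb D$, and by continuity on $\overline{\mathbb D}$.

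Finally, I would read off $f(X)$ from $g(1)$. If $X\in\Int\Bgg$ then $g(1)=f(X)$ directly; if $X\in\partial\Bgg$ then the radial-limit convention of \S\ref{subsec:main} gives $f(X)=\lim_{r\nearrow 1}f(rX)=g(1)$, and this limit exists because $g$ is a polynomial. In either case
\[
f(X)\;=\;g(1)\;=\;\sum_{\alpha<\beta}f^{(\alpha)}(X)\;=\;\sum_{\alpha\leq\beta}f^{(\alpha)}(X),
\]
the last equality holding because the $\alpha=\beta$ term vanishes. Specializing to $\beta=2$ and using $f^{(0)}(Y)=f(0)=0$ yields the ``in particular'' statement $f(Y)=f^{(1)}(Y)$. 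The argument presents no serious obstacle; the only mildly delicate point is identifying the boundary value $f(X)$ with $g(1)$ when $X\in\partial\Bgg$, which is handled precisely by the continuity-to-boundary clause of the NC analytic function definition together with the radial-limit convention.
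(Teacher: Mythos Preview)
Your proof is correct and follows essentially the same route as the paper: reduce to the one-variable analytic function $z\mapsto f(zX)$, use the NC power series on a small disc where nilpotency kills the terms of degree $\ge\beta$, and extend by the identity theorem. You are in fact slightly more careful than the paper in explicitly invoking property \eqref{it:opsub} for the continuity of $g$ up to $\partial\mathbb D$ and in justifying $f(X)=g(1)$ via the radial-limit convention when $X\in\partial\Bgg$.
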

   
\begin{proof}
  Let $X\in\Bgg$ be given and let $r$ denote the series
  radius for $f$. For $z\in\mathbb D$ with $|z|<r$ the power series
  expansion for $f(zX)$ converges. The nilpotent hypothesis
  gives,
$$
 f(zX)= \sum_{\alpha \le \beta} f^{(\alpha)}(X)z^\alpha.
$$
 Since $f(zX)$ is analytic for $|z|<1$ and is equal to the polynomial
 on the right hand side above for $|z|<r$, equality holds for all $z$.

 If $Y\in \partial \Bgg$ and $Y$ is nilpotent of order two, the 
 argument above shows,
$$
 f(zY)= \sum_{\alpha \le 1} f^{(\alpha)}(Y)z^\alpha.
$$
 Moreover, the assumption $f(0)=0$ implies $f^{(0)}=0.$ Choosing $z=1$
 gives $f(Y)=f^{(1)}(Y)$.  
\end{proof}

\begin{lemma}
 \label{lem:uniqueS}
\begin{enumerate}[\rm (a)]
\item
   Suppose $p$ is an NC polynomial of degree $N$ with 
$\CC^{d'\times d}$ coefficients in $g'g$ variables and $p(0)=0$.
\begin{enumerate}[\rm (1)]
\item If 
 $$
   0 \preceq I-\mathbb X_n^{\T}\mathbb X_n - p(\mathbb X_n)^{\T} p(\mathbb X_n)
 $$ 
  for each $n\le N$,
  then $p=0$. 
\item
If 
 $$
   0 \preceq I-\mathbb X_n\mathbb X_n^{\T} - p(\mathbb X_n)p(\mathbb X_n)^{\T}
 $$
   for each $n\le N$, then $p=0$.
\end{enumerate}
\item
   Suppose $h:\Bgg\to\OS{d}{d'}$ is NC analytic.
If $h(\mathbb X_n)=0$ for each $n$, then $h=0$.
\end{enumerate}
\end{lemma}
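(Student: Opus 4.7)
The plan is to prove part (a) by using carefully chosen test vectors in Fock space to isolate each matrix coefficient of $p$, and then to deduce part (b) from (a) via nilpotency of $\mathbb X_n$ combined with an induction on homogeneous degree.

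For (a)(1), the first step is to unpack the defect operator using Lemma \ref{lem:funnyXn}: $I - \mathbb X_n^{\T}\mathbb X_n = I_g \otimes (I - P_n \otimes Q_n)$, whose kernel consists of vectors $e \otimes \omega_1 \otimes \omega_2 \in \CC^g \otimes \Fgpn \otimes \Fgn$ with $\omega_1$ non-empty and $|\omega_2| < n$. Combined with $p(\mathbb X_n)^{\T} p(\mathbb X_n) \succeq 0$, the hypothesis forces $\|p(\mathbb X_n)\xi\|^2 = 0$ for every such $\xi$.

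Next, using Remark \ref{rem:analyticgg} I would write each word $w$ of $p$ as a tensor pair $(u,v)$ of equal-length words in $y$'s and $z$'s, so that $w(\mathbb X) = u(S^{\T}) \otimes v(S)$. For a fixed monomial $w_0 = (u_0,v_0)$ of length $k \le N$ appearing in $p$, apply the above with $n = k$ and test vector $\xi = e \otimes u_0^{\mathrm{rev}} \otimes \emptyset$, which lies in the described kernel (since $u_0^{\mathrm{rev}}$ is non-empty and $\emptyset$ has length $0 < k$). From $p(\mathbb X_k)\xi = 0$, pairing against $e' \otimes \emptyset \otimes v_0$ isolates the $(e',e)$-entry of $a_{w_0}$: the factor $\langle u(S^{\T}) u_0^{\mathrm{rev}}, \emptyset \rangle$ forces $u = u_0$, and $\langle v(S)\emptyset, v_0 \rangle$ forces $v = v_0$. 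Hence every $a_{w_0}$ vanishes and $p = 0$. Part (a)(2) is obtained by the entirely symmetric argument using the second formula of Lemma \ref{lem:funnyXn} and swapping the roles of the two shift copies.

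For (b), the key observation is that $\mathbb X_n$ is nilpotent of order $n+1$: the truncated shift $S_{\ell,n}$ kills the top level of $\Fgn$, so $v(S_n) = 0$ whenever $|v| > n$, and hence $w(\mathbb X_n) = 0$ for $|w| > n$. Lemma \ref{lem:nil} then collapses $h(\mathbb X_n) = \sum_{\alpha \le n} h^{(\alpha)}(\mathbb X_n)$ to a finite sum which vanishes by hypothesis. Now induct on $\alpha$: the base case $n = 0$ gives $\mathbb X_0 = 0$, so $h^{(0)} = 0$; assuming $h^{(\alpha)} = 0$ for all $\alpha < k$, the equation $h(\mathbb X_k) = 0$ reduces to $h^{(k)}(\mathbb X_k) = 0$. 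Since $h^{(k)}$ is homogeneous of degree $k$ with $h^{(k)}(0) = 0$, and since $h^{(k)}(\mathbb X_n) = 0$ trivially for $n < k$ by nilpotency and by hypothesis for $n = k$, the coefficient-extraction argument of part (a) applies verbatim (with equality replacing the inequality) to yield $h^{(k)} = 0$. The creative heart of the whole argument is the selection of the test vector in part (a); once one sees that pairing $e \otimes u_0^{\mathrm{rev}} \otimes \emptyset$ against $e' \otimes \emptyset \otimes v_0$ picks off exactly the desired matrix coefficient, everything else (dimensional bookkeeping, the induction) is routine.
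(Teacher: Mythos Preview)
Your proof is correct and follows essentially the same route as the paper: both arguments use Lemma~\ref{lem:funnyXn} to locate test vectors $\gamma\otimes w\otimes\emptyset$ (with $w$ non-empty) in the kernel of $I-\mathbb X_n^\T\mathbb X_n$, deduce from positivity that $p(\mathbb X_n)$ annihilates them, and then read off the matrix coefficients. The only cosmetic difference is that the paper kills the lower-degree contributions by an explicit induction on homogeneous degree before reading off $p_{w\otimes v}$, whereas you accomplish the same thing in one stroke by pairing against $e'\otimes\emptyset\otimes v_0$ (the $\emptyset$ in the first Fock slot is orthogonal to the positive-length words produced by shorter monomials); for part~(b) the paper simply applies (a) to the truncation $\sum_{\alpha\le N}h^{(\alpha)}$, while you re-run the coefficient extraction inside an induction---same content, different packaging.
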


\begin{proof}
(a) Write
$$
 p=\sum_{\alpha=0}^m p^{(\alpha)}
$$
 as in Remark \ref{rem:analyticgg}. In particular,
$$
  p^{(\alpha)} =\sum_{|u|=|v|=\alpha} a_{u\otimes v} u\otimes v,
$$
  and $a_{u\otimes v}\in \CC^{d^\prime\times d}$.

  By hypothesis $a_{\emptyset}=0$, so that $p^{(0)}=0$.
  Now  suppose $p_k=0$ for $k<n$.  Let
  $w$ be a word of length $n$ and $\gamma \in\CC^g$
  be given. From Lemma \ref{lem:funnyXn}, we have
$$
  0=(I-\mathbb X_n^{\T} \mathbb X_n)  \gamma \otimes w\otimes \emptyset
.
$$
  Hence, 
\begin{equation*}
   0 = p(\mathbb X_n) \gamma \otimes w\otimes \emptyset 
     =  p^{(n)}(\mathbb X_n) \gamma \otimes  w\otimes \emptyset 
     =  \sum_{|v|=n} p_{w\otimes v}\gamma \otimes \emptyset \otimes v.
\end{equation*}
Thus $p_{w\otimes v}=0$ and it follows that $p^{(n)}=0$.

(b) This proof 
  is similar. Here is a brief outline. 
  First note that $0=h(0)$. Let $r$ denote 
  the series radius for $h$. Fix $N$. For $|z|<r$ and for
  any $n\le N$, by Lemma \ref{lem:nil} we have (since $\mathbb X_n$
  is nilpotent of order $n\le N$)
$$
  h(\mathbb X_n) = \sum_{\alpha=1}^N h^{(\alpha)}(\mathbb X_n).
$$
  
  If we now let $p$ denote the polynomial $\sum_{\alpha=1}^Nh^{(\alpha)}$ of degree $N$,
it follows from (a) that 
   $p=0$. Since this is true for all $N$, we see $h=0.$
\end{proof}


\section{NC isometries}

This section has two parts. The first shows that the linear part
of an NC ball map is an NC ball map, i.e., it is what is
commonly known as a complete isometry.  The 
second subsection classifies these linear NC ball maps.
Recall that an NC analytic function $f:\Bgg \to \Bdd$
is an NC ball map provided it is NC analytic
and contraction-valued in the interior of $\Bgg$ and
for $X\in\Bgg(N)$ with $\|X\|=1$, $\|f(e^{it}X)\|=1$
for almost every $t\in\RR$. 

\subsection{Pencil ball maps have isometric derivatives}
 A linear mapping $\psi:\Mgg\to \Mdd$ is completely
 determined by its action on the matrix units $E_{j,\ell}\in \Mgg$
 with a $1$ in the $(j,\ell)$ position and $0$ elsewhere.
 The mapping $\psi$ then naturally extends to a mapping,
 still denoted $\psi$, on $\CC^{n\times n}\otimes \Mgg$ by the formula
\begin{equation}
 \label{eq:def1npsi}
 \psi\left(\begin{bmatrix} X_{j,\ell}\end{bmatrix}_{j,\ell}\right)
   =\sum X_{j,\ell}\otimes \psi(E_{j,\ell}) \in \CC^{n\times n} \otimes \Mdd.
\end{equation}
  For notational simplicity, the formula above is written $\psi(X)$.
  The mapping $\psi$ is {\bf completely isometric} if
  $\|\psi(X)\| = \|X\|$ for each $X\in \CC^{n	\times n}\otimes \Mgg$ and each $n$,
  and is {\bf completely contractive} if $\|\psi(X)\|\le \|X\|$
  for all $X$.

\begin{prop}\label{prop:linIsom}
 Suppose $f:\Bgg \to \Bdd$ is an NC analytic map with $f(0)=0$.
 If $f$ is an NC ball map, then
 $f^{(1)}$, the linear part of $f$, is a complete isometry.
\end{prop}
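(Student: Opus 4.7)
The plan is to reduce the equality $\|f^{(1)}(X)\|=\|X\|$ to an evaluation of $f$ at a carefully chosen nilpotent element, where Lemma~\ref{lem:nil} collapses $f$ to its linear part. By homogeneity of $f^{(1)}$ it suffices to treat arbitrary $n$ and $X\in\CC^{n\times n}\otimes\Mgg$ with $\|X\|=1$.

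Given such an $X$, I introduce the nilpotent dilation $Y\in\CC^{2n\times 2n}\otimes\Mgg$ whose $(j,\ell)$-block is
\[
Y_{j,\ell}:=\begin{bmatrix}0 & X_{j,\ell}\\ 0 & 0\end{bmatrix}.
\]
Up to a shuffle of tensor factors, $Y=E_{12}\otimes X$, where $E_{12}$ is the $2\times 2$ matrix unit. Two things are immediate: $\|Y\|=\|X\|=1$ (so $Y\in\partial\Bgg$), and $Y_{j_1,\ell_1}Y_{j_2,\ell_2}=0$ for all indices, so $Y$ is nilpotent of order~$2$. The defining formula \eqref{eq:def1npsi}, together with the distributivity of the Kronecker product over block structure, gives
\[
f^{(1)}(Y)=\sum_{j,\ell}Y_{j,\ell}\otimes f^{(1)}(E_{j,\ell})=\begin{bmatrix}0 & f^{(1)}(X)\\ 0 & 0\end{bmatrix},
\]
whose norm equals $\|f^{(1)}(X)\|$.

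Next I invoke Lemma~\ref{lem:nil}: since $Y$ is nilpotent of order~$2$ and $f(0)=0$, for every $z\in\DD$ the NC power series of $f$ at $zY$ collapses to $f(zY)=zf^{(1)}(Y)$, so the radial limit $f(e^{it}Y)=e^{it}f^{(1)}(Y)$ exists for every $t$. Since $\|Y\|=1$, the boundary-to-boundary hypothesis forces $\|f(e^{it}Y)\|=1$, which at $t=0$ yields $\|f^{(1)}(Y)\|=1$. Combining with the block identity above gives $\|f^{(1)}(X)\|=1=\|X\|$, as desired; rescaling disposes of $X\ne 0$ with $\|X\|\ne 1$, and the case $X=0$ is trivial. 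Since $n$ and $X$ were arbitrary, $f^{(1)}$ is completely isometric.

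The substantive step is the nilpotent dilation: it simultaneously preserves the norm, kills every NC monomial of degree $\ge 2$, and intertwines with $f^{(1)}$ through a visible block identity. Once the dilation is in hand, all the potentially delicate analytic machinery—convergence of the NC power series on the boundary, Schwarz-type estimates, Fatou limits—is sidestepped, and the proof is essentially a pair of one-line applications of Lemma~\ref{lem:nil} and the ball map property. The only real risk of error is in verifying that the definition of "maps boundary to boundary" is actually applicable to $Y$, which is precisely what Lemma~\ref{lem:nil} secures by guaranteeing existence of the radial limit $f(e^{it}Y)$.
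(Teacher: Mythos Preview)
Your proof is correct and follows essentially the same approach as the paper: both arguments evaluate $f$ on the nilpotent dilation $Y=\begin{bmatrix}0&X\\0&0\end{bmatrix}$, invoke Lemma~\ref{lem:nil} to collapse $f(Y)$ to $f^{(1)}(Y)$, and then read off $\|f^{(1)}(X)\|=1$ from the boundary-to-boundary property. Your write-up is slightly more careful than the paper's in explicitly verifying that the radial limit $f(e^{it}Y)$ exists (so that the ball-map hypothesis actually applies) and in handling the rescaling, but the substance is identical.
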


\begin{proof}
We start by observing that, in view of Lemma \ref{lem:nil},
\begin{equation}\label{eq:hish1}
 f^{(1)} \left(\left[\begin{array}{cc}0&X\\0&0\end{array}\right]\right)
= f \left(\left[\begin{array}{cc}0&X\\0&0\end{array}\right]\right)
\end{equation}
 for every $X\in\Bgg$.

If $f$ is an NC  ball map, then
for $X\in\partial\Bgg$ 
\begin{equation}\label{eq:hish12}
1=\|X\|=\left\| \left[\begin{array}{cc}0& X\\0&0\end{array}\right]\right\|=
\left\|f \left( \left[\begin{array}{cc}0& X\\0&0\end{array}\right] \right)\right\|
\end{equation}
by the binding property.
Now by \eqref{eq:hish1},
\begin{equation}\label{eq:hish13}
\left\|f \left( \left[\begin{array}{cc}0&X\\0&0\end{array}\right] \right)\right\|=
\left\|f^{(1)} \left( \left[\begin{array}{cc}0&X\\0&0\end{array}\right] \right)\right\|=
\left\| \left[\begin{array}{cc}0&f^{(1)}(X)\\0&0\end{array}\right] \right\|=
\|f^{(1)}(X)\|.
\end{equation}
From \eqref{eq:hish12} and \eqref{eq:hish13} we obtain 
$\|f^{(1)}(X)\|=1$ for all $X$ with $\|X\|=1$.
\end{proof}

\begin{rem}\rm
 \label{rem:shuffle}
  This remark does not contribute to the proofs, rather it is 
  for the sake of reconciling the definitions 
   of complete isometries and contractions given here with what
   is typically found in the literature (cf. \cite{Paulsen02}).

  Often a completely contractive (resp. isometric) mapping
  $\psi:\Mgg\to \Mdd$ is defined as follows. Given $n$, let
  $(\Mgg)^{n\times n}$ denote the $n\times n$ matrices with entries
  from $\Mgg$ and define
   $1_n\otimes \psi: (\Mgg)^{n\times n}\to (\Mdd)^{n\times n}$ by
$$
 1_n\otimes \psi\left(\begin{bmatrix} Y_{\alpha,\beta}\end{bmatrix}_{\alpha,\beta=1}^n\right)
  =\begin{bmatrix} \psi(Y_{\alpha,\beta})\end{bmatrix}_{\alpha,\beta=1}^n
.
$$
  
 In this definition, the block matrix $Y=\begin{bmatrix} Y_{\alpha,\beta}\end{bmatrix}_{\alpha,\beta=1}^n$ is written as
$$
  Y= \sum  E_{\alpha,\beta} \otimes Y_{\alpha,\beta},
$$
  where $E_{\alpha,\beta}\in \CC^{n	\times n}$ are the $n\times n$ matrix units.
  Evaluating $\psi$ on $Y$ becomes,
$$
 1_n\otimes \psi (Y) = \sum  E_{\alpha,\beta} \otimes \psi(Y_{\alpha,\beta}),
$$

  By using the matrix units basis $E_{j,\ell}$ of $\Mgg$,
$Y$ can be 
  rewritten as
$$
  Y=\sum X_{j,\ell}\otimes E_{j,\ell},
$$
  for some $X_{j,\ell}$.  Evaluating $1_n\otimes \psi$ on $Y$
  expressed as above gives equation \eqref{eq:def1npsi}.
  Passing between these two expressions for $Y$ is known
  as the {\bf canonical shuffle} in \cite{Paulsen02}.  

  Letting $A_{j,\ell}=\psi(E_{j,\ell})$, equation \eqref{eq:def1npsi} becomes,
$$
 \psi(X)=\sum X_{j,\ell}\otimes A_{j,\ell}.
$$
\end{rem}

\subsection{Completely isometric maps on $\Mgg$}
 \label{sec:bind}

 The following theorem which classifies completely isometric
maps on $\Mgg$ is the main result of this section.

 \begin{thm}
  \label{thm:complete-iso-structure}
   A linear mapping $\psi:\CC^{g^\prime\times g} \to \CC^{d^\prime\times d}$
   is completely isometric if and only if there exist
   unitaries $U:\CC^d \to \CC^d$, 
$V:\CC^{d^\prime}\to \CC^{d^\prime}$ and
 a completely contractive $($linear$)$
   mapping $\varphi:\CC^{g^\prime\times g} \to \CC^{(d^\prime - g^\prime)\times (d-g)}$
   such that 
  \begin{equation*}
    \psi(Y) = V \begin{bmatrix} Y & 0 \\ 0 & \varphi(Y) \end{bmatrix} U^{\T}.
  \end{equation*}
 \end{thm}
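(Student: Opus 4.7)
The converse direction is a direct norm computation. For any $X\in\Mgg(n)$, applying $\psi$ entrywise and using the canonical shuffle of tensor factors (cf.\ Remark~\ref{rem:shuffle}) gives
\[
\psi(X)\;=\;(I_n\otimes V)\,\bigl(X \oplus (I_n\otimes\varphi)(X)\bigr)\,(I_n\otimes U^{\T})
\]
as a block--diagonal element of $\Mdd(n)$. Since $I_n\otimes V$ and $I_n\otimes U^{\T}$ are unitary and $\varphi$ is completely contractive, $\|\psi(X)\|=\max\{\|X\|,\|(I_n\otimes\varphi)(X)\|\}=\|X\|$, so $\psi$ is completely isometric.

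For the forward direction the plan is to pass through the standard Paulsen/Arveson/Stinespring dilation machinery. Embed $\Mgg$ as the upper--right off--diagonal corner of $\CC^{(g'+g)\times(g'+g)}$ via $Y\mapsto\begin{bmatrix}0&Y\\0&0\end{bmatrix}$ and analogously embed $\Mdd$ into $\CC^{(d'+d)\times(d'+d)}$; on these corners the induced map is still a complete isometry. The Paulsen off--diagonal trick extends it to a unital completely positive (UCP) map on the operator system $\cS\subset\CC^{(g'+g)\times(g'+g)}$ of matrices with scalar diagonal blocks and arbitrary off--diagonal blocks, and Arveson's extension theorem lifts this to a UCP map $\Phi:\CC^{(g'+g)\times(g'+g)}\to\CC^{(d'+d)\times(d'+d)}$. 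Stinespring's theorem, combined with the fact that every $*$-representation of the simple C*-algebra $\CC^{(g'+g)\times(g'+g)}$ is unitarily equivalent to an ampliation of the identity representation, produces a Hilbert space $\cH$ and an isometry $W:\CC^{d'+d}\to\CC^{g'+g}\otimes\cH$ with $\Phi(X)=W^{\T}(X\otimes I_\cH)W$. Applying the UCP identity to the scalar diagonal elements $\begin{bmatrix}I_{g'}&0\\0&0\end{bmatrix}$ and $\begin{bmatrix}0&0\\0&I_g\end{bmatrix}$ forces $W$ to be block diagonal with respect to $\CC^{d'+d}=\CC^{d'}\oplus\CC^d$ and $\CC^{g'+g}\otimes\cH=(\CC^{g'}\otimes\cH)\oplus(\CC^g\otimes\cH)$, giving isometries $W_1:\CC^{d'}\to\CC^{g'}\otimes\cH$ and $W_2:\CC^d\to\CC^g\otimes\cH$. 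Restricting $\Phi$ to the off--diagonal block then yields
\[
\psi(Y)\;=\;W_1^{\T}(Y\otimes I_\cH)W_2.
\]

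The main obstacle is converting complete \emph{isometry} of $\psi$---rather than merely complete contractivity---into the claimed splitting. Complete contractivity alone only yields the factorization above; the isometric condition $\|\psi(X)\|=\|X\|$ on all ampliations has to force the existence of a common unit vector $v_0\in\cH$ such that $I_{g'}\otimes v_0:\CC^{g'}\hookrightarrow\CC^{g'}\otimes\cH$ lies inside the range of $W_1$ and, simultaneously, $I_g\otimes v_0:\CC^g\hookrightarrow\CC^g\otimes\cH$ lies inside the range of $W_2$. Concretely, testing $\psi$ on rank--one matrices $ab^{\T}$, where $\|ab^{\T}\|=\|a\|\|b\|$ must be attained by $\|\psi(ab^{\T})\|=\|W_1^{\T}(ab^{\T}\otimes I_\cH)W_2\|$, pins down such a common fiber via rigidity of norm attainment in a product of isometries. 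Extending the resulting $g'$- and $g$-dimensional isometric embeddings into $\CC^{d'}$ and $\CC^d$ to full unitaries $V:\CC^{d'}\to\CC^{d'}$ and $U:\CC^{d}\to\CC^{d}$ produces the stated block form, and the residual action of $W_1,W_2$ on the orthogonal complements supplies the completely contractive map $\varphi$.
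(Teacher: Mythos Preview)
Your approach via the Paulsen off-diagonal trick, Arveson extension, and Stinespring dilation is a legitimate and natural alternative to the paper's direct argument, and up through the factorization $\psi(Y)=W_1^{\T}(Y\otimes I_{\cH})W_2$ with isometries $W_1,W_2$ everything is correct.  Moreover, your structural insight is right: \emph{if} there exists a unit vector $v_0\in\cH$ with $\CC^{g'}\otimes v_0\subseteq\range W_1$ and $\CC^g\otimes v_0\subseteq\range W_2$, then decomposing $\cH=\CC v_0\oplus v_0^{\perp}$ makes both $W_1$ and $W_2$ block diagonal, and the off-diagonal blocks of $\psi$ vanish automatically---a cleaner mechanism than the paper's separate Lemma~\ref{lem:uniqueS-special}.

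The gap is that you never actually produce $v_0$.  The sentence ``testing $\psi$ on rank-one matrices $ab^{\T}$ \dots pins down such a common fiber via rigidity of norm attainment'' is not a proof; it is a restatement of what needs to be shown.  Norm attainment for a single rank-one $ab^{\T}$ only gives, via Cauchy--Schwarz, \emph{some} unit vector $\xi_{a,b}\in\cH$ lying simultaneously in the ranges of $(a^{\T}\otimes I)W_1$ and $(b^{\T}\otimes I)W_2$.  Nothing you wrote explains why these vectors can be chosen independent of $a$ and $b$, and rank-one tests live at amplification level $n=1$, whereas the paper's proof of the analogous step (Proposition~\ref{prop:ball-map} and Lemma~\ref{lem:ortho-fs}) genuinely uses the isometric property at levels up to $gg'$---for instance the test matrix $X=\sum_{j,\ell} e_j(e_\ell')^{\T}\otimes e_j(e_\ell')^{\T}$ and the two-by-two test $Y$ used to separate distinct $\alpha,\beta$.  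Extracting a \emph{single} $v_0$ that works uniformly is precisely the substantive content of the theorem; in the paper it occupies roughly two pages of careful inequalities.  Your writeup identifies this as ``the main obstacle'' and then skips it, so as it stands the forward implication is incomplete.
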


  Throughout this subsection let $\psi:\Mgg \to \Mdd$
  denote a completely isometric mapping.
  Let  $A_{j,\ell}=\psi(e_j (e_\ell^\prime)^{\T})$ 
  for $1\le j\le g'$ and $1\le \ell\leq g$ be as in Remark \ref{rem:shuffle}.
  We have represented $\psi$ in terms of the matrix
 \begin{equation*}
    A= \begin{bmatrix} A_{j,\ell} \end{bmatrix}_{j,\ell=1}^{g^\prime,g} \in
\left(\Mdd\right)^{g'\times g}
 \end{equation*}
  This matrix has the formal block transpose given by
 \begin{equation*}
   A^\T = \begin{bmatrix} A_{\ell,j} \end{bmatrix}_{j,\ell}.
 \end{equation*}

 \begin{lem}
  \label{lem:AT}
   If $\psi$ is completely contractive, then $A^\T$ is a contraction.
 \end{lem}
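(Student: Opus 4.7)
The plan is to exhibit $A^\T$ (up to direct sum with a zero block) as the image under $\psi$ of a specific unit-norm element of $\CC^{n\times n}\otimes\Mgg$; complete contractivity of $\psi$ will then yield $\|A^\T\|\le 1$ immediately.

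Concretely, set $n=\max(g,g')$ and define $X\in\CC^{n\times n}\otimes\Mgg$ by declaring its $(j,\ell)$-block entry to be the matrix unit $E^{(n)}_{\ell,j}\in\CC^{n\times n}$ (the matrix with a single $1$ in position $(\ell,j)$, zeros elsewhere). First I would check that $\|X\|=1$: unwinding the block structure, the entry of $X$ at position $((j,a),(\ell,b))$, with $j\in\{1,\dots,g'\}$, $\ell\in\{1,\dots,g\}$, and $a,b\in\{1,\dots,n\}$, equals $\delta_{a,\ell}\delta_{b,j}$. Hence the only nonzero rows are those with $a\le g$ and the only nonzero columns are those with $b\le g'$; the resulting nontrivial $gg'\times gg'$ sub-block of $X$ is the permutation matrix implementing the basis swap $(j,a)\leftrightarrow(a,j)$. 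Thus $X$ is a partial isometry and $\|X\|=1$. Next I would compute $\psi(X)=\sum_{j,\ell}E^{(n)}_{\ell,j}\otimes A_{j,\ell}$: viewed as an $n\times n$ block matrix with entries in $\Mdd$, its $(i,k)$-block equals $\sum_{j,\ell}\delta_{i,\ell}\delta_{k,j}A_{j,\ell}=A_{k,i}$ whenever $i\le g$ and $k\le g'$, and is zero otherwise. So $\psi(X)$ is the direct sum of $A^\T$ (sitting in the upper-left corner) with a zero block, which gives $\|\psi(X)\|=\|A^\T\|$. Complete contractivity of $\psi$ then forces $\|A^\T\|=\|\psi(X)\|\le\|X\|=1$, as claimed.

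There is essentially no obstacle, only a bookkeeping point: the chosen $X$ is precisely the canonical shuffle that interchanges the outer block indices $(j,\ell)$ of $\Mgg$ with the inner matrix indices $(a,b)$ of $\CC^{n\times n}$, and this shuffle is exactly what converts $A$ into its formal block transpose $A^\T$. That is why $A^\T$, rather than $A$ itself, is the object naturally controlled by complete (as opposed to mere) contractivity; the latter only bounds $\|A\|$.
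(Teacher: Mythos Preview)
Your proof is correct and follows essentially the same approach as the paper: both construct a specific unit-norm element $X$ (the canonical shuffle) whose image under $\psi$ is $A^\T$ (or $A^\T\oplus 0$), then invoke complete contractivity. The only cosmetic difference is that you work with $n=\max(g,g')$ and obtain $A^\T$ padded by a zero block, whereas the paper writes $X=\sum_{j,\ell} e_j(e_\ell')^\T\otimes e_\ell' e_j^\T$ and asserts $\psi(X)=A^\T$ directly; your bookkeeping is arguably cleaner.
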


 \begin{proof}
   Choose 
  $X=\sum_{j,\ell=1}^{g,g^\prime} e_j (e_\ell^\prime)^{\T}\otimes (e_\ell^\prime)e_j^{\T}$.
   Direct computation reveals that $X^{\T}X = I_{gg^\prime}$ and thus
   the block matrix $X$ is a contraction. Hence
  \begin{equation*}
    \psi(X)=A^\T
  \end{equation*}
   is also a contraction.
 \end{proof}

 \begin{rem}\rm
\begin{enumerate}[\rm (1)]
\item
    That the converse of Lemma \ref{lem:AT} 
is not true in general can be seen by considering
    the mapping $\psi:\CC^{2\times 2}\to \CC$ defined by
    $\psi(e_j (e_\ell^\prime)^{\T})=\delta_j^\ell$.  In this case,
  \begin{equation*}
      A^\T= I_2,
  \end{equation*}
    but $\psi(E_{11} + E_{22})=2$, so that $\psi$ is not even contraction-valued.
\item
For $g=1$ the converse does hold. 
We leave this as an exercise for the interested reader.
\end{enumerate}
 \end{rem}

 \begin{prop}
  \label{prop:ball-map}
   A completely contractive mapping
   $\psi: \Mgg \to \Mdd$ is
   a complete isometry if and only if there exist  
   there is a set $\{f_1,\dots,f_g\}\subseteq \mathbb C^d$
   of unit vectors satisfying
 \begin{equation}
  \label{eq:fs}
   \langle A_{\alpha,s}f_u, A_{\beta,t}f_v\rangle =
     \begin{cases}   1 & \text{ if } (\alpha,s,t)=(\beta,u,v) \\
                     0 & \text{ otherwise.}
     \end{cases}
 \end{equation}
  Here $1\le u,v\le g$, $1\le s,t\le g,$ and $1\le \alpha,\beta \le g^\prime$.
 \end{prop}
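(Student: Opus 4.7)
The plan proceeds by first unpacking (\ref{eq:fs}) and then treating each implication. Specializing $(\alpha,s,t)=(\beta,u,v)$ and varying parameters shows $\langle A_{\alpha,s}f_s,A_{\alpha,t}f_t\rangle=1$ for all $\alpha,s,t$, which (since both sides are unit vectors) forces $A_{\alpha,s}f_s$ to be independent of $s$; call this unit vector $e'_\alpha\in\CC^{d'}$. The case $\alpha\neq\beta$ gives $\langle e'_\alpha,e'_\beta\rangle=0$, making $\{e'_\alpha\}_{\alpha=1}^{g'}$ orthonormal, and the case $s\neq u$ at $(\alpha,s,s)=(\alpha,u,u)$ forces $A_{\alpha,s}f_u=0$. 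So (\ref{eq:fs}) is equivalent to $A_{\alpha,s}f_u=\delta_{s,u}\,e'_\alpha$ with $\{e'_\alpha\}$ orthonormal and $\{f_u\}$ unit vectors.

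For the ``if'' direction, assume this structural form with $\psi$ completely contractive. The key preliminary step is to upgrade the $f_u$'s to an orthonormal system. Apply $\psi$ to $Y_y:=\sum_\ell y_\ell E_{1,\ell}\in\Mgg$ (of operator norm $\|y\|_2$) and evaluate at $\eta_y:=\sum_u \bar y_u\, f_u$: direct computation yields $\psi(Y_y)\eta_y=\|y\|_2^2\,e'_1$ of norm $\|y\|_2^2$, and complete contractivity $\|\psi(Y_y)\|\le\|y\|_2$ forces $\|\eta_y\|\ge\|y\|_2$, i.e., $y^\ast Gy\ge y^\ast y$ for the Gram matrix $G=[\langle f_u,f_v\rangle]$. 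Thus $G\succeq I$; since $G_{uu}=1$ and $\operatorname{tr}(G-I)=0$, we conclude $G=I$. Complete isometry then follows immediately: given $X\in\CC^{n\times n}\otimes\Mgg$ and a unit $\xi=\sum_\ell\xi_\ell\otimes e_\ell^{(g)}$ attaining $\|X\xi\|=\|X\|$, set $\eta:=\sum_\ell\xi_\ell\otimes f_\ell$; orthonormality of $\{f_u\}$ gives $\|\eta\|=1$, while $A_{j,\ell}f_u=\delta_{\ell,u}\,e'_j$ together with orthonormality of $\{e'_j\}$ yields $\psi(X)\eta=\sum_j\bigl(\sum_\ell X_{j,\ell}\xi_\ell\bigr)\otimes e'_j$ of norm $\|X\xi\|=\|X\|$, so $\|\psi(X)\|\ge\|X\|$.

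For the ``only if'' direction, assume $\psi$ is completely isometric and construct the $f_u$'s. For each $\alpha$, test $\psi$ on the norm-$\sqrt g$ element $X^{(\alpha)}\in\CC^{g\times g}\otimes\Mgg$ with $X^{(\alpha)}_{j,\ell}=\delta_{j,\alpha}\,e_1^{(g)}(e_\ell^{(g)})^{\T}$. Since $\psi(X^{(\alpha)})(e_b\otimes v)=e_1\otimes A_{\alpha,b}v$, complete isometry reads $\sup\{\|\sum_b A_{\alpha,b}v_b\|:\sum_b\|v_b\|^2=1\}=\sqrt g$, and saturating the Cauchy--Schwarz chain $\|\sum_b A_{\alpha,b}v_b\|\le\sum_b\|v_b\|\le\sqrt g\,(\sum_b\|v_b\|^2)^{1/2}$ (using $\|A_{\alpha,b}\|\le 1$) forces the extremizing $v_b$'s to have equal norm $1/\sqrt g$, to make $v_b/\|v_b\|=:f_b^{(\alpha)}$ a unit right-singular vector of $A_{\alpha,b}$ with singular value $1$, and to make $\{A_{\alpha,b}f_b^{(\alpha)}\}_b$ all equal to a common unit vector $e'_\alpha$. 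The \emph{main obstacle} is that the $f_b^{(\alpha)}$'s a priori depend on $\alpha$, and one must also verify $A_{\alpha,s}f_u=0$ for $s\neq u$ and orthonormality of $\{e'_\alpha\}$. Independence of $\alpha$ is obtained via a joint test $\widetilde X\in\CC^{g\times g}\otimes\Mgg$ with $\widetilde X_{j,\ell}=e_j^{(g)}(e_\ell^{(g)})^{\T}$ (of norm $\sqrt{gg'}$, assuming $g\ge g'$; the reverse case is dual), whose norm identity $\sum_j\|\sum_b A_{j,b}v_b\|^2=gg'$ requires simultaneous saturation in each $j$-summand, hence a common frame $\{f_b\}$ for all $\alpha$; the remaining zero and orthogonality statements then follow from scalar polarization tests of the form $\psi(E_{\alpha,s}+c E_{\alpha,u})$ evaluated on $f_s$, together with block-row tests acting on $e'_\alpha$.
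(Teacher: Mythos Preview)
Your plan tracks the paper's proof closely. The unpacking of \eqref{eq:fs} matches Lemma~\ref{lem:ortho-fs}; your Gram-matrix/trace argument for the orthonormality of $\{f_u\}$ in the ``if'' direction is a pleasant variant of the paper's polarization in that lemma, and the remainder of that direction is the paper's computation. In the ``only if'' direction, your tests $X^{(\alpha)}$, the Cauchy--Schwarz saturation, and the joint test $\widetilde X$ are exactly the paper's steps (the paper avoids your case split $g\ge g'$ by allowing rectangular blocks in the test element).

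The gap is in your last sentence. The level-$1$ test $\psi(E_{\alpha,s}+cE_{\alpha,u})$ applied to $f_u$ gives only $\langle A_{\alpha,s}f_u,e'_\alpha\rangle=0$: expanding $\|A_{\alpha,s}f_u+ce'_\alpha\|^2\le 1+|c|^2$ yields $\|A_{\alpha,s}f_u\|^2 + 2\,\RE(\bar c\langle A_{\alpha,s}f_u,e'_\alpha\rangle)\le 1$, and varying the row index only upgrades this to $A_{\alpha,s}f_u\perp\mathrm{span}\{e'_\gamma\}_\gamma$, which does not force $A_{\alpha,s}f_u=0$ since the $e'_\gamma$ need not span $\CC^{d'}$. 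The paper's argument here is a genuinely higher-level test: Lemma~\ref{lem:AT} shows the block-transpose $A^{\T}$ is a contraction (it is $\psi$ evaluated at an isometric element), and applying $A^{\T}$ to $f_u\otimes e_\alpha$ produces a column whose $u$-th entry $A_{\alpha,u}f_u=e'_\alpha$ already saturates the norm bound, forcing the remaining entries $A_{\alpha,s}f_u$ ($s\ne u$) to vanish. Equivalently, a level-$2$ test with $X_{\alpha,s}=e_1e_1^{\T}$, $X_{\alpha,u}=e_2e_1^{\T}$ and all other blocks zero has $\|X\|=1$ and $\|\psi(X)(e_1\otimes f_u)\|^2=\|A_{\alpha,s}f_u\|^2+1$. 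Your phrase ``block-row tests acting on $e'_\alpha$'' does not clearly encode either of these. (For the orthonormality of $\{e'_\alpha\}$, by contrast, the scalar polarization $\psi(E_{\alpha,s}+cE_{\beta,s})$ on $f_s$ \emph{does} suffice --- but that is a different test from the one you wrote.)
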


  The following Lemma is an important ingredient in the proof.

\begin{lem}
 \label{lem:ortho-fs}
   Under the hypotheses of Proposition {\rm\ref{prop:ball-map}}, the set 
   $\{f_1,\dots,f_g\}$ is orthonormal.
   Moreover, 
 \begin{equation}
  \label{eq:hsa}
     h_\alpha=A_{\alpha,j}f_j\in\CC^{d'} \quad (1\leq\alpha\leq g')
 \end{equation}
    is independent of $j.$ 
\end{lem}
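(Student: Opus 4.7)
The plan is to derive both conclusions from condition \eqref{eq:fs} by judiciously specializing the indices and then invoking a short spectral argument.

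First I would unpack three immediate specializations of \eqref{eq:fs}. Setting $\beta=\alpha$, $s=t=j$, and $u=v=j$ gives $\|A_{\alpha,j}f_j\|^2=1$. Setting $\beta=\alpha$, $s=t=j$, and $u=v$ with $u\neq j$ gives $\|A_{\alpha,j}f_u\|^2=0$, since the triple equality $(\alpha,s,t)=(\beta,u,v)$ fails (because $s=j\neq u$); hence $A_{\alpha,j}f_u=0$ whenever $j\neq u$. Setting $\beta=\alpha$, $s=u=j$, and $t=v=k$ yields $\langle A_{\alpha,j}f_j,A_{\alpha,k}f_k\rangle=1$ for all $j,k$. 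The ``moreover'' assertion then follows from the Cauchy--Schwarz equality case: two unit vectors with inner product $1$ must coincide, so $A_{\alpha,j}f_j=A_{\alpha,k}f_k$, and $h_\alpha:=A_{\alpha,j}f_j\in\CC^{d'}$ is well-defined independently of $j$.

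For orthonormality of $\{f_1,\dots,f_g\}$, I would introduce, for each $u$, the positive self-adjoint $d\times d$ matrix
$$
B_u:=\sum_{\alpha=1}^{g'}A_{\alpha,u}^{\T}A_{\alpha,u}.
$$
The first specialization above gives $\langle B_u f_u,f_u\rangle=\sum_\alpha\|A_{\alpha,u}f_u\|^2=g'$. Since $\psi$ is completely contractive and $\|E_{\alpha,u}\|=1$, each $\|A_{\alpha,u}\|\leq 1$, and hence $\|B_u\|\leq g'$ by the triangle inequality. A positive operator of norm at most $g'$ whose Rayleigh quotient at the unit vector $f_u$ equals $g'$ must have $f_u$ as an eigenvector for the eigenvalue $g'$; thus $B_uf_u=g'f_u$. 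Meanwhile, the kernel observation $A_{\alpha,u}f_v=0$ for $v\neq u$ gives $B_uf_v=0$ directly. Since $B_u$ is self-adjoint and the eigenvalues $g'>0$ and $0$ are distinct, the eigenvectors $f_u$ and $f_v$ are orthogonal.

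I expect the key step to be the kernel observation $A_{\alpha,j}f_u=0$ for $j\neq u$: this upgrades the indicator-style inner-product relation in \eqref{eq:fs} to a hard-zero statement, which is what makes $B_u$ act so transparently on each $f_v$. Once that is in hand, the rest is standard Hilbert-space bookkeeping.
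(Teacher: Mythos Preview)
Your proof is correct, and it differs from the paper's in both halves.

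For the ``moreover'' part, the paper expands $A_{\alpha,j}f_j$ in the orthonormal system $\{A_{\beta,\ell}f_\ell\}_\beta$, reads off the coefficients from \eqref{eq:fs}, and then compares norms to kill the remainder. Your route---read off $\langle A_{\alpha,j}f_j,A_{\alpha,k}f_k\rangle=1$ directly from \eqref{eq:fs} and invoke the equality case of Cauchy--Schwarz---is shorter and uses exactly the same input.

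For orthogonality, the paper first establishes $f_j=A_{\alpha,j}^{\T}h_\alpha$ (from the equality $\|A_{\alpha,j}f_j\|=\|f_j\|$ for the contraction $A_{\alpha,j}$), then uses contractivity of $\psi$ on the norm-$\sqrt{2}$ element $e_\alpha(e_j+e^{it}e_\ell)^{\T}$ to bound $\|f_j+e^{it}f_\ell\|^2\le 2$ for every $t$, forcing $\langle f_j,f_\ell\rangle=0$. Your argument instead exploits the kernel fact $A_{\alpha,j}f_u=0$ for $j\ne u$, which you correctly extract directly from \eqref{eq:fs}; the paper only obtains this later (equation \eqref{eq:fisperp}) by a separate argument using the matrix $A^{\T}$. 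Once you have that kernel fact, your spectral argument with $B_u=\sum_\alpha A_{\alpha,u}^{\T}A_{\alpha,u}$ works cleanly. In fact it can be streamlined: since $\|A_{\alpha,u}f_u\|=\|f_u\|$ for the contraction $A_{\alpha,u}$, one has $A_{\alpha,u}^{\T}A_{\alpha,u}f_u=f_u$, whence $\langle f_v,f_u\rangle=\langle A_{\alpha,u}f_v,A_{\alpha,u}f_u\rangle=0$ directly, without summing over $\alpha$ or invoking the Rayleigh quotient. Either way, your approach avoids the paper's varying-$t$ trick and is arguably more transparent.
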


 \begin{proof}
      Let $f_j$ be a set of unit vectors satisfying equation \eqref{eq:fs}.
   Notice first that, for fixed $j$, the set 
   $\{A_{\alpha,j}f_j\mid 1\le \alpha \le g^\prime\}$
   is an orthonormal set. Let $\cS_j$ denote the span
   of this set. Given $j,\ell$ and $\alpha$,
$$
  A_{\alpha,j}f_j = \sum c_\beta A_{\beta,\ell}f_\ell +\zeta
$$ 
  for some $\zeta$ orthogonal
  to $\cS_\ell$ (and where the dependence of the coefficients 
  $c_\beta$ on $\alpha,j,\ell$
  has been suppressed).  Taking the inner product with
  $A_{\gamma,\ell}f_\ell$ it follows that $c_\beta=1$
  if $\beta=\alpha$ and $c_\beta =0$ otherwise; i.e.,
$$
  A_{\alpha,j}f_j = A_{\alpha,\ell}f_\ell  + \zeta.
$$
  On the other hand both $A_{\alpha,j}f_j$ and 
$A_{\alpha,\ell}f_\ell$ are unit vectors
  and thus $\zeta=0$.  Hence, $A_{\alpha,j}f_j$ is independent
  of $j$ and 
 \begin{equation*}
     h_\alpha = A_{\alpha,j}f_j
  \end{equation*}
   is unambiguously defined.

   Since $A_{\alpha,j}$ is a contraction (as it is, by definition,
   $\varphi(E_{\alpha,j})$) and since $\|f_j\|=1$ and
   $$\|A_{\alpha,j}f_j\|=1,$$ it follows that
  \begin{equation*}
    f_j = A_{j,\alpha}^{\T} h_\alpha,
  \end{equation*}
   and is thus independent of $\alpha$.

   Using this last claim, consider, for $j\ne \ell$,
  \begin{equation*}
     2\ge  \| (A_{j,\alpha}^{\T} + e^{it} A_{\ell,\alpha}^{\T})h_\alpha \|^2 
       =  2 + 2 \RE e^{it} \langle f_j,f_\ell \rangle.
  \end{equation*}
    It follows that $\langle f_j,f_\ell \rangle =0.$
    Here we have used
   $$
    \varphi\big((e_je_\alpha^{\T} \otimes e_\alpha e_j^{\T} 
     + e^{-it} e_\ell e_\alpha^{\T} \otimes e_\alpha e_\ell^{\T})\big)
     = A_{\alpha,j}+ e^{-it} A_{\alpha,\ell}
   $$
    is a contraction, $\left\|\begin{bmatrix}1& e^{it}\end{bmatrix}\right\|^2=2$, and  $\|h_\alpha\|=1.$
 \end{proof}

 \begin{proof}[Proof of Proposition {\rm \ref{prop:ball-map}}]
   Suppose such $f's$ exist. Let $X\in \CC^{g^\prime\times g}\otimes \CC^{n	\times n}$
   with $\|X\|=1$ be given. Thus $X$ is a contraction and there is a unit vector
   $x=\sum x_j \otimes e_j$ such that $\|Xx\|=1$.  In particular,
 \begin{equation*}
    \sum x_t^{\T} X_{\alpha,t}^{\T} X_{\alpha,s} x_s =1.
  \end{equation*}
   Thus,
 \begin{equation*}
  \begin{split}
   \langle  \psi(X)^{\T}\psi(X) \sum_u x_u \otimes f_u), \sum_v x_v\otimes f_v \rangle
      = & \sum (f_v^{\T} A_{\beta,t}^{\T} A_{\alpha,s}f_u)
             (x_v^{\T} X_{\beta,t}^{\T}X_{\alpha,s}x_u) \\
      = & \sum  x_t^{\T} X_{\alpha,t}^{\T} X_{\alpha,s}x_s = 1.
  \end{split}
 \end{equation*}

  Of course we also must be careful to check, in view of 
   the orthonormality of $\{f_1,\dots,f_g\}$ of Lemma \ref{lem:ortho-fs},
 \begin{equation*}
   \langle \sum_u x_u \otimes f_u, \sum_v x_v\otimes f_v \rangle
     = \sum_u  x_u^{\T} x_u =1.
 \end{equation*}
  Thus, if $\|X\|=1$, then $\|\psi(X)\|\ge 1$. Since $\psi$ assumed
  to be a contraction, the proof that $\psi$ is completely isometric  follows.

  Let us now turn to the converse. Suppose $\psi$ is completely isometric.
  Fix $\alpha$ and
  choose
  $X=\sum_\ell e_\alpha (e_\ell^\prime)^{\T} \otimes e_\alpha \otimes (e_\ell^\prime)^{\T}$. 
  Then,
 \begin{equation*}
   X X^{\T} = g^\prime e_\alpha e_\alpha ^{\T} \otimes e_\alpha  e_\alpha^{\T}.
 \end{equation*}
  Thus, $\varphi(X)=\sum A_{1,\ell}\otimes e_1 (e_\ell^\prime)^{\T}$ has norm at most
  $\sqrt{g^\prime}$. Equivalently,
 \begin{equation*}
    \Delta_\alpha = \begin{bmatrix} A_{\alpha,1} & \dots & A_{\alpha,g^\prime} \end{bmatrix}
 \end{equation*}
  has norm at most $\sqrt{g^\prime}$. Suppose now that
 \begin{equation*}
    h=\begin{bmatrix} h_1\\ \vdots \\ h_{g^\prime} \end{bmatrix}
 \end{equation*}
  and $\| \Delta_\alpha h \|^2 = g^\prime$. Then, using the fact that
  each $A_{\alpha,s}$ is a contraction,
 \begin{equation*}
   \begin{split}
     g^\prime &= \| \sum A_{\alpha,s}h_s \|^2 
        = | \sum_{s,t} \langle A_{\alpha,s}h_s, A_{\alpha,t}h_t \rangle | 
        \le   \sum_{s,t} | \langle A_{\alpha,s}h_s, A_{\alpha,t}h_t \rangle | \\
       & \le   \sum_{s,t} \| A_{\alpha,s}h_s\| \,  \| A_{\alpha,t}h_t \| 
       \le  \sum_{s,t} \|h_s \| \, \|h_t\| 
        =  (\sum \|h_s\|)^2 
       \le  g^\prime \|h\|^2 = g^\prime.
   \end{split}
 \end{equation*}
   The Cauchy-Schwartz inequality was used in two of the inequalities. Because
   equality prevails in the end, we must have
   equality in the inequalities. Therefore,
   $\|h_s\|^2 = \frac{1}{g^\prime}$ for each $s$ and moreover,
 \begin{equation*}
     \langle A_{\alpha,s}h_s, A_{\alpha,t}h_t \rangle =\frac{1}{g^\prime}
 \end{equation*}
   for each $s$.

    Choose $X=\sum_{j,\ell}  e_j(e_\ell^\prime)^{\T} \otimes e_j(e_\ell^\prime)^{\T}$ and
    note
   $\| X\|^2 = gg^\prime$. Then
\begin{equation*}
   \varphi(X)=   \sum  A_{j,\ell} e_j (e_\ell^\prime)^{\T}
\end{equation*}
   has norm squared exactly $gg^\prime$. In particular, there is a unit vector
\begin{equation*}
  f=\begin{bmatrix} f_1 \\ \vdots \\ f_{g^\prime} \end{bmatrix}
\end{equation*}
  such that $\|\varphi(X)f\|^2 = gg^\prime$. Hence
\begin{equation*}
  gg^\prime = \sum_\alpha \|\Delta_\alpha f \|^2.
\end{equation*}
 From the paragraph above $\|\Delta_\alpha\|^2 \le g^\prime$
 and thus for each $\alpha$ we must have $\|\Delta_\alpha f\|^2=g^\prime$.
 Again in view of the preceding paragraph, it follows that
 $\|f_s\|^2=\frac{1}{g^\prime}$ for each $s$ and moreover
\begin{equation}
 \label{eq:alpha-s-t}
  \langle A_{\alpha,s}f_s, A_{\alpha,t}f_t \rangle = \frac{1}{g^\prime}
\end{equation}
  for every $\alpha,s,t$.

  Fix $\alpha$.
  Applying the matrix $A^\T$ of Lemma \ref{lem:AT} to the vector
  $f_1\otimes e_\alpha$ produces the vector
 \begin{equation*}
    \begin{bmatrix} A_{\alpha,1}f_1 \\ A_{\alpha,2}f_1 \\ \vdots \\ A_{\alpha,g^\prime}f_1 \end{bmatrix}.
 \end{equation*}
  Since the first entry has norm $\sqrt{\frac{1}{g^\prime}}$
   and the whole vector itself has norm at
  most $\sqrt{\frac{1}{g^\prime}}$, it follows that
  $A_{\alpha,s}f_1=0$ whenever $s\ne 1$.  Applying the
  same argument to the other indices $u$ shows
 \beq
\label{eq:fisperp}
  A_{\alpha,s}f_u = 0 \quad \text{for } s\ne u.
\eeq

  For the final ingredient, fix $\alpha \ne \beta$ and let
 \begin{equation*}
   Y= e_\alpha (e_1^\prime)^{\T} \otimes e_1^{\T} +  e_\beta (e_2^\prime)^{\T} \otimes e_2^{\T}.
 \end{equation*}
  Since
 \begin{equation*}
    Y^{\T} Y = e_1e_1^{\T} \otimes e_1e_1^{\T}  + e_2e_2^{\T} \otimes e_2 e_2^{\T}
,
 \end{equation*}
   $Y$ is a contraction. Therefore,
 \begin{equation*}
   \varphi(Y)= A_{\alpha,1} \otimes e_1^{\T} + A_{\beta,2}\otimes e_2^{\T}
 \end{equation*}
  is also a contraction.  Let
 \begin{equation*}
   F(t) = f_1\otimes e_1 + e^{it} f_2 \otimes e_2.
 \end{equation*}

  With these notations,
\begin{equation*}
  \varphi(Y)F(t) = A_{\alpha,1}f_1 + A_{\beta,2}f_2,
\end{equation*}
  which gives the second equality in 
 \begin{equation*}
\begin{split}
    2 &= \frac{1}{2\pi}\int\big( 2+e^{-it}\langle A_{\alpha,1}f_1,A_{\beta,2}f_2 \rangle
                             + e^{it} \langle A_{\beta,2}f_2,A_{\alpha,1}f_1 \rangle \big)dt\\
      &= \frac{1}{2\pi} \int \| \varphi(Y)F(t)\|^2 dt 
      \le  2.
  \end{split}
\end{equation*}
  The inequality is a consequence of the hypothesis $\|\varphi(Y)\|\le 1$
  and $\|F(t)\|^2=2$.  It follows that $\|\varphi(Y)F(t)\|=1$ for
  every $t$ and thus 
   $\langle A_{\alpha,1}f_1, A_{\beta,2}f_2\rangle =0$ whenever $\alpha\ne \beta$.

   Repeating the argument with other indices shows,
 \begin{equation}
  \label{eq:alpha-not-beta}
    \langle A_{\alpha,s}f_s, A_{\beta,t}f_t\rangle =0 \mbox{ if } \alpha\ne \beta.
 \end{equation}
   (Here $s=t$ is ok so long
   as $\alpha \ne \beta$.)

   Combining equations \eqref{eq:alpha-s-t},
   \eqref{eq:fisperp}, and \eqref{eq:alpha-not-beta} gives
the desired \eqref{eq:fs}.
\end{proof}

\subsubsection{Characterization of complete isometries}
  In this subsection, Theorem \ref{thm:complete-iso-structure} is deduced from Proposition \ref{prop:ball-map}. We begin with a lemma which follows readily from Lemma \ref{lem:uniqueS}.

\begin{lem}
 \label{lem:uniqueS-special}
   Suppose the linear map $\Sigma:\Mgg\to \Mdd$ has the form
$$
  \Sigma(x)=\begin{bmatrix} x & \sigma_1(x)\\ \sigma_2(x) & \sigma_3(x)\end{bmatrix}.
$$
  If $\Sigma$ is a completely contractive, then $\sigma_1=0$ and
$\sigma_2=0$.
\end{lem}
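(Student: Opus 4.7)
The plan is to feed the compressed shift tuples $\mathbb{X}_n$ into $\Sigma$ and then read off two Schur-complement inequalities from the fact that $\Sigma(\mathbb{X}_n)$ must be a contraction; these inequalities will exactly match the hypotheses of Lemma \ref{lem:uniqueS}(a), which then forces $\sigma_1=\sigma_2=0$.

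More precisely, by Lemma \ref{lem:funnyXn}, $\mathbb{X}_n^\T\mathbb{X}_n = I_g\otimes P_n\otimes Q_n$ is a projection, so $\|\mathbb{X}_n\|\le 1$, and hence complete contractivity of $\Sigma$ yields
\begin{equation*}
\Sigma(\mathbb{X}_n)^\T\Sigma(\mathbb{X}_n)\preceq I
\qquad\text{and}\qquad
\Sigma(\mathbb{X}_n)\Sigma(\mathbb{X}_n)^\T\preceq I
\end{equation*}
for every $n$. Using linearity of $\Sigma$ to expand its blocks, the $(1,1)$-block of the first inequality reads
\begin{equation*}
\mathbb{X}_n^\T\mathbb{X}_n+\sigma_2(\mathbb{X}_n)^\T\sigma_2(\mathbb{X}_n)\preceq I,
\end{equation*}
i.e.\ $I-\mathbb{X}_n^\T\mathbb{X}_n-\sigma_2(\mathbb{X}_n)^\T\sigma_2(\mathbb{X}_n)\succeq 0$. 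Since $\sigma_2$ is a linear NC polynomial with $\sigma_2(0)=0$, part (a)(1) of Lemma \ref{lem:uniqueS} applies (we may take $N=1$ in its statement, and the inequality automatically holds for that single value) and gives $\sigma_2=0$.

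A completely symmetric computation on the $(1,1)$-block of $\Sigma(\mathbb{X}_n)\Sigma(\mathbb{X}_n)^\T\preceq I$ yields
\begin{equation*}
I-\mathbb{X}_n\mathbb{X}_n^\T-\sigma_1(\mathbb{X}_n)\sigma_1(\mathbb{X}_n)^\T\succeq 0,
\end{equation*}
and part (a)(2) of Lemma \ref{lem:uniqueS} then gives $\sigma_1=0$. I do not expect a genuine obstacle here: the only thing to watch is that Lemma \ref{lem:uniqueS}(a) is stated for polynomials with matrix coefficients of possibly different dimensions than $d'\times d$, but inspection of its proof (which only uses the ranges/kernels supplied by Lemma \ref{lem:funnyXn}) shows it applies verbatim to $\sigma_1$ and $\sigma_2$, which take values in rectangular matrix spaces of appropriate sizes.
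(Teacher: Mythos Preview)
Your proof is correct and follows essentially the same approach as the paper: evaluate $\Sigma$ at the compressed shift tuples $\mathbb X_n$, read off the $(1,1)$ block of $I-\Sigma(\mathbb X_n)^\T\Sigma(\mathbb X_n)\succeq 0$ to invoke Lemma~\ref{lem:uniqueS}(a)(1) for $\sigma_2$, and reverse the order of multiplication to handle $\sigma_1$ via part (a)(2). Your added remarks about $\|\mathbb X_n\|\le 1$, about taking $N=1$ since $\sigma_2$ is linear, and about the rectangular coefficient sizes are all correct elaborations of details the paper leaves implicit.
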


\begin{proof}
  For a given $n$ we have
\begin{equation*}
 \begin{split}
  0&\preceq  I-\Sigma(\mathbb X_n)^{\T} \Sigma(\mathbb X_n) \\
   &=  \begin{bmatrix} I-\mathbb X_n^{\T} \mathbb X_n 
         - \sigma_2(\mathbb X_n)^{\T}\sigma_2(\mathbb X_n) & * \\ * & * \end{bmatrix}
 \end{split}
\end{equation*}
  Thus the upper left hand corner in the block matrix above is positive
  semidefinite and Lemma \ref{lem:uniqueS} implies $\sigma_2=0$.
  Reversing the order of the products shows $\sigma_1=0$.
\end{proof}

 \begin{proof}[Proof of Theorem {\rm \ref{thm:complete-iso-structure}}]
  If $\psi$ has the given form, then $\psi$ is evidently
  completely isometric.  

  Conversely, suppose $\psi$ is completely isometric.
  Let $f_j$ be a set of unit vectors satisfying equation \eqref{eq:fs}.
  By Lemma \ref{lem:ortho-fs}, the set $\{f_1,\dots,f_g\}$
  is orthonormal and moreover, $h_\alpha=A_{\alpha,j}f_j$
  is independent of $j$ and $\{h_1,\dots,h_{g^\prime}\}$ is
  also an orthonormal set.

   Let
 \begin{equation*}
     F=   \begin{bmatrix} f_1 & \dots & f_{g} \end{bmatrix}, \quad
     H=  \begin{bmatrix} h_1 & \dots & h_{g'} \end{bmatrix}.
 \end{equation*}
  The mappings $F,H$ are isometries $\CC^g \to \CC^d$ 
and $\CC^{g^\prime} \to \CC^{d^\prime}$
 respectively.  Further,
  for given $\beta,u$, 
 \begin{equation*}
  \begin{split}
    h_\beta^\T  \sum_{\alpha,s} x_{\alpha,s} A_{\alpha,s} f_u
      =  \sum x_{\alpha,s} h_\beta^\T  A_{\alpha,s}f_u 
      = x_{\alpha,s} h_\alpha^\T  A_{\alpha,s}f_s 
      = x_{\alpha,s}.
  \end{split}
 \end{equation*}
   It follows that, 
 \begin{equation*}
   H^{\T} \varphi(x) F = x.
 \end{equation*}
   This proves the first part of this direction of the theorem. 

   The isometries $H$ and $F$ extend to unitaries $V$ and $U$ respectively
   which produces the representation
 $$
   \varphi(x)= V \begin{bmatrix} x & \sigma_1 \\ \sigma_2 & \sigma_3 \end{bmatrix} U^{\T},
 $$
   where the block matrix $\Sigma=\begin{bmatrix} x & \sigma_1 \\ \sigma_2 & \sigma_3 \end{bmatrix}$ is completely contractive since 
   the same is true of $\varphi$. Now Lemma \ref{lem:uniqueS-special}
   completes the proof. 
\end{proof}

\section{Proof of Theorem \ref{thm:ball-NO-L}}
\label{sec:scottsPf}
  In this section we prove Theorem \ref{thm:ball-NO-L}. Accordingly,
  suppose $h:\Bgg\to \Bdd$ is an NC  ball map
  and  $h(0)=0.$ 
  From Lemma \ref{prop:linIsom},
  $h^{(1)}$, the linear part of $h$, is a complete isometry.
  By Theorem \ref{thm:complete-iso-structure},  
  there exists unitaries $\U$ and
  $V$ and a completely contractive mapping $\tilde{\hh}^{(1)}$ such that
 \begin{equation}
  \label{eq:lin}
   \hh^{(1)}(x)= V \begin{bmatrix} x & 0 \\ 0 & \tilde{\hh}^{(1)}(x) \end{bmatrix} \U^{\T}.
 \end{equation}
  We claim that $Vh(x)U^\T$ is of the desired form \eqref{eq:ballDeal}.

 For the sake of convenience we replace $h(x)$ by $V^{\T}h(x)U$.
For $X\in\Bgg(N)$ consider
$\DD\to\OS{d}{d'}(N)$, $z\mapsto h(zX)$, which is analytic (in $z$).
This is a function of one complex variable, so the classical
Schwarz lemma applies. Hence
for all $0\leq\delta<1$
and $0\leq \theta \leq 2\pi$ we have
\begin{equation}\label{eq:hIneq01}
0\preceq \delta^2 I -h(\delta e^{i\theta}X)^\T  h(\delta e^{i\theta}X).
\end{equation}
If $\delta$ is in
the series radius, we may write $$h(\delta e^{i\theta}X)=
h^{(1)}(\delta e^{i\theta}X)+ h^{(\infty)}(\delta e^{i\theta}X) =
\sum_{\alpha=1}^\infty h^{(\alpha)}(\delta e^{i\theta}X).$$
We integrate \eqref{eq:hIneq01} for such $\delta$ to obtain
\begin{equation}\begin{split}\label{eq:hIneq00} 0 &\preceq
\frac 1{2\pi}
\int^{2\pi}_0\big(\delta^2 I -h(\delta e^{i\theta}X)^\T  h(\delta e^{i\theta}X)\big)
d\theta
\\ &=
\delta^2 I-\delta^2h^{(1)}(X)^\T h^{(1)}(X) - \frac 1{2\pi}
\int^{2\pi}_0 h^{(\infty)}(\delta e^{i\theta} X)^\T  h^{(\infty)}(\delta e^{i\theta}X) d\theta \\
&=
\delta^2 I -\delta^2h^{(1)}(X)^\T h^{(1)}(X) 
   - \sum_{\alpha=2}^\infty \delta^{2\alpha}h^{(\alpha)}(X)^\T  h^{(\alpha)}(X),
\end{split}
\end{equation}
  where the last equality uses the homogeneity (of order $\alpha$) of
   $h^{(\alpha)}$.

Fix an $\alpha\geq 2$ and
write $\delta^{\alpha-1} h^{(\alpha)}=\left[\begin{array}{cc}
b_1&b_2\\
b_3&b_4
\end{array}\right]$ for NC analytic polynomials $b_j$. Then
by equations \eqref{eq:lin} and \eqref{eq:hIneq00} and because
the $b_j$ are polynomials,
\beq\label{eq:crucial1}
\begin{split}
0 & \preceq \left[\begin{array}{cc}
I&0 \\
0 & I
\end{array}\right]
-
\left[\begin{array}{cc}
X^\T X &0 \\
0 & \tilde h^{(1)}(X)^\T \tilde h^{(1)}(X)
\end{array}\right]
-
\left[\begin{array}{cc}
b_1(X)^\T&b_3(X)^\T\\
b_2(X)^\T&b_4(X)^\T
\end{array}\right] \left[\begin{array}{cc}
b_1(X)&b_2(X)\\
b_3(X)&b_4(X)
\end{array}\right] \\
&=
\left[\begin{array}{cc}
I-X^{\T} X &0 \\
0 &I - \tilde h^{(1)}(X)^\T \tilde h^{(1)}(X)
\end{array}\right]
 \\&\qquad - \left[\begin{array}{cc}
b_1(X)^\T b_1(X)+b_3(X)^\T b_3(X) & b_1(X)^\T b_2(X)+ b_3(X)^\T b_4(X) \\
b_2(X)^\T b_1(X)+ b_4(X)^\T b_3(X) & b_2(X)^\T b_2(X)+b_4(X)^\T b_4(X)
\end{array}\right].
\end{split}
\eeq

 It follows that
$$
  I-\mathbb X_n^{\T} \mathbb X_n - b_j(\mathbb X_n)^{\T} b_j(\mathbb X_n) \succeq 0
$$ 
  for $j=1,3$ and all $n$.  Lemma \ref{lem:uniqueS} thus implies
  $b_1=0$ and $b_3=0$.

  We now multiply in the other order (consider say $\mathbb X_n \mathbb  X_n^{\T}$
  instead of $\mathbb X_n^{\T} \mathbb X_n$) to conclude that $b_2=0$
  (also $b_1=0,$ but that we already knew.) This shows $h$ has the desired
  form and completes the proof.


\section{Linear fractional transformation of a ball} 
 \label{sec:lfl}
It is well known that the bianalytic maps 
on the unit disk $\DD$ are exactly the linear fractional maps.
These act transitively on the unit disk. That is, if $w, z \in \DD$,
then there is a linear fractional map $\cF$ which maps $w$ to $z$. 
It is standard in classical
several complex variables that this generalizes to special domains in 
$\CC^n$ \cite{He}. In this subsection
we give basic properties of linear fractional maps on $\Bdd$.

Given a $d^\prime \times d$ matrix $v$ with $\|v\|<1$, define
$\cF_v:\Bdd \to \Bdd$ by 
\beq\label{eq:deff}
\cF_v (u):=v-(I_{d'}-vv^\T )^{1/2}u(I_d-v^\T u)^{-1}(I_d-v^\T v)^{1/2}.
\eeq

\begin{lem}\label{lem:linFrac1}
Suppose $\cD$ is an open NC domain containing $0$. If  $u:\cD \to \Bdd$
is NC analytic, then 
$\cF_v (u(x))$ is an NC analytic function $($in $x)$ on $\cD$.
\end{lem}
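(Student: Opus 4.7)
The plan is to exhibit $\cF_v \circ u$ as a combination of sums, products, and compositions of NC analytic functions on $\cD$, and then invoke Proposition~\ref{prop:anal}. Write
$$
\cF_v(u(x)) = v - C_1 \, u(x) \, \bigl(I_d - v^\T u(x)\bigr)^{-1} \, C_2,
$$
where the constants $C_1 := (I_{d'}-vv^\T)^{1/2}$ and $C_2 := (I_d - v^\T v)^{1/2}$ are well-defined because $\|v\|<1$. Constants are (trivially) NC analytic on every NC domain containing~$0$, and by parts (i) and (ii) of Proposition~\ref{prop:anal} sums and products of NC analytic functions are NC analytic. Hence the entire proof reduces to showing that
$$
x \longmapsto \bigl(I_d - v^\T u(x)\bigr)^{-1}
$$
is NC analytic on $\cD$.

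First I would observe that $w(x) := v^\T u(x)$ is NC analytic on $\cD$ (it is a constant times the NC analytic function $u$). Next, I would introduce the NC analytic function $h(z) := (I_d - z)^{-1}$ of a single $d\times d$ NC matrix variable $z$, realized by the Neumann power series $h(z) = \sum_{k=0}^{\infty} z^k$. This series has series radius~$1$, so $h$ is NC analytic on the NC domain $\Int \cB_{d\times d}$, which contains~$0$. Because $u(\cD) \subseteq \Bdd$ and $\|v\|<1$, we have the uniform estimate
$$
\|w(x)\| \;\le\; \|v^\T\|\,\|u(x)\| \;\le\; \|v\| \;<\; 1
$$
for every $x\in\cD$, so $w$ maps $\cD$ into $\Int \cB_{d\times d}$. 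Proposition~\ref{prop:anal}(iii) then delivers that $h \circ w = (I_d - v^\T u)^{-1}$ is NC analytic on $\cD$, as required, and combining with the constant-multiplication and product rules finishes the proof.

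I do not foresee a genuine obstacle: the argument is purely a closure computation built on the classical fact that the Neumann series converges on the open matrix unit ball. The only point requiring care is to verify that the range of the inner function lands inside the NC domain on which $h$ is NC analytic; this is precisely where the \emph{strict} inequality $\|v\|<1$ is used, since it converts the non-strict contraction bound $\|u(x)\|\le 1$ into the strict bound $\|v^\T u(x)\|<1$ needed for convergence of the Neumann series and, equivalently, for applicability of Proposition~\ref{prop:anal}(iii).
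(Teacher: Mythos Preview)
Your proposal is correct and follows essentially the same approach as the paper: both reduce to showing that $(I_d - v^\T u)^{-1}$ is NC analytic, observe that the constants $(I_{d'}-vv^\T)^{1/2}$ and $(I_d-v^\T v)^{1/2}$ are NC analytic, and then invoke Proposition~\ref{prop:anal} on the composition of the Neumann-series function $(1-z)^{-1}$ with $v^\T u$, using $\|v\|<1$ to guarantee $\|v^\T u(x)\|<1$. The only cosmetic difference is that the paper views $(1-z)^{-1}$ as a scalar NC analytic function on $\DD$ (and lets the composition rule in Proposition~\ref{prop:anal}(iii) handle the matrix sizes), whereas you view it directly as a $d\times d$ function on $\Int\cB_{d\times d}$; either formulation works.
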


\begin{proof}
Since $v$ is a matrix with $\|v\|<1$, the expressions $(I_{d'}-vv^\T )^{1/2}$ and
$(I_d-v^\T v)^{1/2}$ are constant NC analytic
functions. As sums and products of NC analytic functions are
NC analytic, it suffices
to show that $\zeta:=(I_d-v^\T u)^{-1}$ is NC analytic. Note that $v^\T u$ is
NC analytic on $\cD$. Thus $\zeta$ being the composition of
the NC analytic function $(1-z)^{-1}$ on $\DD$ and the NC analytic
function $v^\T u$ on $\cD$ is NC analytic as well.
\end{proof}

Now we give the basic properties of $\cF$ in a lemma
generalizing Lemma \ref{lemma:multilinfracIntro}.
For this we
define $\cU_k$ to be the set of all $ U \in \Bdd(N)$
which are isometric on a space of dimension at least
$ N k$. For example, 
$\cU_d$ denotes the isometries in $\Bdd(N)$.

\begin{lemma} \label{lemma:multilinfrac}
Suppose that  $N\in \mathbb{N}$ and $V\in\Bdd(N)$ with $\| V\|
<1$.
\begin{enumerate}[\rm (1)]
\item $U\mapsto \cF_V(U)$ maps the unit ball $\Bdd(N)$ into itself
with boundary to the boundary.
Furthermore, for each $k\leq d$, $\cU_k$ maps onto $\cU_k$.
\item If $U\in\Bdd(N)$, then
$\cF_V(\cF_V(U))=U.$
\item $\cF_V(V)=0$ and $\cF_V(0)=V$.
\end{enumerate}
\end{lemma}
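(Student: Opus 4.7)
Plan: The three assertions are algebraic identities for the matrix Möbius transformation $\cF_V$, and the strategy is to prove them by direct computation after establishing two key tools: an intertwining identity relating $V$ to the square-root factors, and a ``defect identity'' expressing $I-\cF_V(U)^\T\cF_V(U)$ as a conjugation of $I-U^\T U$. Throughout I abbreviate $A=(I_{d'}-VV^\T)^{1/2}$ and $B=(I_d-V^\T V)^{1/2}$, so that $\cF_V(U)=V - A\, U\,(I_d-V^\T U)^{-1} B$.

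Step 1 (Part (3)). The equality $\cF_V(0)=V$ is immediate. For $\cF_V(V)=0$, the central fact is the intertwining identity $A V = V B$, equivalently $(I-VV^\T)^{1/2} V = V(I-V^\T V)^{1/2}$. This follows because $V(V^\T V)^k = (VV^\T)^k V$ for every $k\in\mathbb N$, hence $V\, p(V^\T V) = p(VV^\T)\, V$ for every polynomial $p$, and finally for the continuous function $t\mapsto (1-t)^{1/2}$ on $[0,\|V\|^2]$ by Stone--Weierstrass. Substituting into the definition yields $\cF_V(V) = V - V(I-V^\T V)^{1/2}(I-V^\T V)^{-1/2}(I-V^\T V)^{1/2} = V-V=0$.

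Step 2 (Defect identity, the core of (1)). I will prove
\begin{equation*}
I_d - \cF_V(U)^\T \cF_V(U) \;=\; B\,(I_d-U^\T V)^{-1}\,(I_d-U^\T U)\,(I_d-V^\T U)^{-1}\,B.
\end{equation*}
The proof expands $\cF_V(U)^\T\cF_V(U)$, collects four terms, and simplifies using $A^2 = I-VV^\T$, $B^2 = I-V^\T V$, and the intertwining identity of Step 1 applied to $V^\T A = BV^\T$. The cross terms and the $V^\T A^2 U$ term combine via the algebraic identity $B^2 - (I-U^\T V)^{-1}(I-U^\T U)(I-V^\T U)^{-1} B^2 + \{\text{cross terms}\} = \cF_V(U)^\T\cF_V(U)-V^\T V$, which one verifies by clearing the two denominators. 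The analogous identity with $V^\T V$ replaced by $VV^\T$ and the roles of $A,B$ swapped holds for $I_{d'}-\cF_V(U)\cF_V(U)^\T$. From the defect identity, $I-U^\T U\succeq 0$ implies $I-\cF_V(U)^\T\cF_V(U)\succeq 0$, so $\cF_V(U)\in\Bdd(N)$; and since $(I-V^\T U)$ is invertible (because $\|V\|<1$, $\|U\|\le 1$), the two positive semidefinite matrices have kernels of the same dimension, so the norm equals $1$ on one side if and only if it does on the other.

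Step 3 (Rank preservation for $\cU_k$). The defect identity of Step 2 shows that $\ker(I_d-\cF_V(U)^\T\cF_V(U)) = (I-V^\T U)\,\ker(I_d-U^\T U)$, a linear isomorphism. Hence the isometric subspace dimension is exactly preserved, giving $\cF_V(\cU_k)\subseteq \cU_k$. Surjectivity onto $\cU_k$ then follows once the involution property of Step 4 is in hand, since $U = \cF_V(\cF_V(U))$ exhibits any element of $\cU_k$ as $\cF_V$ of an element of $\cU_k$.

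Step 4 (Part (2)). Writing $W := \cF_V(U)$, I would verify $\cF_V(W)=U$ directly. The main simplification is that, by a short computation parallel to Step 2, $I_d - V^\T W = B\,(I_d-V^\T U)^{-1}\,B$, whence $(I_d-V^\T W)^{-1} = B^{-1}(I_d-V^\T U)\,B^{-1}$. Substituting this, together with $AW = AV - A^2 U(I_d-V^\T U)^{-1}B = AV-(I-VV^\T)U(I_d-V^\T U)^{-1} B$ and again the intertwining identity $AV=VB$, causes the inner fraction in $\cF_V(W)$ to telescope, yielding $\cF_V(W)=U$.

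The main obstacle is Step 4: unwinding the nested inverses in $\cF_V(\cF_V(U))$ is the only genuinely delicate bookkeeping, and the formula $I-V^\T W = B(I-V^\T U)^{-1}B$ is the single identity that makes the telescoping work; once this is obtained, Steps 1--3 follow by comparatively routine manipulations.
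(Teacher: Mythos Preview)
Your proposal is correct. Parts (2) and (3) are essentially the paper's own argument: the paper also isolates the intertwining relation $(I-VV^\T)^{1/2}V=V(I-V^\T V)^{1/2}$ and, for the involution, first derives precisely your key identity $I-V^\T W=B(I-V^\T U)^{-1}B$ before substituting back into $\cF_V(W)$.

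The one genuine difference is in Part (1). You go after the matrix defect identity
\[
I-\cF_V(U)^\T\cF_V(U)=B(I-U^\T V)^{-1}(I-U^\T U)(I-V^\T U)^{-1}B
\]
directly by expansion. The paper instead works vector by vector: it introduces the unitary
\[
M=\begin{bmatrix}(I-V^\T V)^{1/2}&-V^\T\\ V&(I-VV^\T)^{1/2}\end{bmatrix},
\]
and for each $y$ sets $i=\bigl((I-V^\T V)^{-1/2}(I-V^\T U)y,\,-Uy\bigr)$, $o=Mi$, checks $\cF_V(U)i_1=o_2$, and reads off $\|i_1\|^2-\|\cF_V(U)i_1\|^2=\|y\|^2-\|Uy\|^2$ from unitarity. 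This is the same information as your defect identity, but packaged as a norm identity rather than an operator identity; the advantage is that the bookkeeping for the cross terms is handled for free by $M^\T M=I$, while your route gives the cleaner operator statement and makes the rank claim transparent. One small slip: in your Step 3, the kernel is $B^{-1}(I-V^\T U)\ker(I-U^\T U)$ rather than $(I-V^\T U)\ker(I-U^\T U)$, but since $B$ is invertible this does not affect the dimension count.
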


\begin{proof}
The proof is motivated by linear system theory but an understanding of system theory is not
needed to read the proof.

  Let $y\in\mathbb C^{Nd}$ be given.  Define 
\begin{equation*}
 i= \begin{pmatrix} i_1\\i_2 \end{pmatrix}
    =\begin{pmatrix} (I-V^\T V)^{-\frac12} (I-V^\T U)y \\ -Uy \end{pmatrix}
     \in  \mathbb C^{Nd}  \oplus  \CC^{Nd^\prime}.
\end{equation*}

 Let $M$ denote the matrix 
\beq
 M:=\bmat (I-V^\T V)^{1/2} & -V^\T  \\ V & (I-VV^\T )^{1/2} \emat.
\eeq
 Straightforward computation shows $M$ is unitary; i.e., $M^\T M=I=MM^\T$.
 Let
\begin{equation*}
 \begin{split}
  o= \begin{pmatrix} o_1 \\ o_2\end{pmatrix} 
   =  Mi  =  \begin{pmatrix} y \\ (I-VV^\T)^{-\frac12} (V-U)y \end{pmatrix}
    \in  \mathbb C^{Nd}  \oplus  \CC^{Nd^\prime}.
 \end{split}
\end{equation*}
   The relation $V(I-VV^\T)^{-\frac12}= (I-V^\T V)^{-\frac12}V$ 
   was used in computing $Mi$. 

   Since $M$ is unitary,
\begin{equation}
 \label{eq:multiio} 
   \|i_1\|^2+  \|i_{2}\|^2=\|o_1\|^2+\|o_{2}\|^2.
\end{equation}
  On the other hand, computations give
\begin{equation*}
  \cF_V(U)i_1=o_2.
\end{equation*}
 Combining the last two equations gives
\begin{equation}
 \label{eq:FUVy}
 \begin{split}
  \|i_1\|^2-  \|\cF_V(U)i_1\|^2 = & \|o_1\|^2-\|i_2\|^2 
           =  \|y^2\|^2-\|Uy\|^2
           \ge 0.
 \end{split}
\end{equation}
  Since the mapping $y\mapsto i_1=(I-V^\T V)^{-\frac12} (I-V^\T U)y$
  is onto, the matrix $\cF_V(U)$ is a contraction
  and the first part of item (1) of  the lemma is proved.

  To prove the second part of item (1), notice that from
  equation \eqref{eq:FUVy} and the fact that
  both $\cF_V(U)$ and $U$ are contractions, the dimension of the space
  on which $\cF_V(U)$ is isometric is the same
  as the dimension of the space on which $U$ is isometric.

  We now turn to the proof of item (2).
Define $$F := \cF_V(U) = V - (I - VV^\T )^{1/2}U(I-V^\T U)^{-1}(I- V^\T V)^{1/2}.$$
First notice that
\begin{equation*}\begin{split}I-V^\T F&=I -
V^\T V + V^\T (I - VV^\T )^{1/2}U(I-V^\T U)^{-1}(I- V^\T V)^{1/2} \\
&= (I -
V^\T V) + (I - V^\T V)^{1/2}V^\T U(I-V^\T U)^{-1}(I- V^\T V)^{1/2} \\
&= (I - V^\T V)^{1/2}(I-V^\T U)(I-V^\T U)^{-1}(I- V^\T V)^{1/2} +  \\
& \qquad + (I - V^\T V)^{1/2}V^\T U(I-V^\T U)^{-1}(I- V^\T V)^{1/2} \\
&= (I - V^\T V)^{1/2}(I-V^\T U)^{-1}(I- V^\T V)^{1/2}.
\end{split} \end{equation*}
So
$$(I-V^\T F)^{-1}=(1-V^\T V)^{-1/2}(I-V^\T U)(I-V^\T V)^{-1/2}.$$
We use this and elementary calculations to obtain
\begin{equation*}
\begin{split}
\cF_V(F)&=V-(I-VV^\T )^{1/2}F(I-V^\T F)^{-1}(I-V^\T V)^{1/2} \\
&=V-(I-VV^\T )^{1/2}F(I-V^\T V)^{-1/2}(I-V^\T U) \\
&=V-(I-VV^\T )^{1/2}V(I-V^\T V)^{-1/2}(I-V^\T U)+(I-VV^\T )U \\
&=V-V(I-V^\T U)+U-VV^\T U=U.
\end{split}
\end{equation*}
For (3), compute
\begin{equation*}
\begin{split}
\cF_V(V)&= V- (I-VV^\T )^{1/2}V(I-V^\T V)^{-1}(I-V^\T V)^{1/2} \\
&= V-(I-VV^\T )^{1/2}V(I-V^\T V)^{-1/2} \\
&= V-V(I-V^\T V)^{1/2}(I-V^\T V)^{-1/2}=0.
\end{split}
\end{equation*}
\end{proof}

\part*{\centerline{Part II. Clinging}}
 \label{partII}
  In this, and the sections to follow, we turn  our attention
  to semi-distinguished ball maps introduced in 
  \S \ref{subsec:distinguished}.  In particular,
  attention is restricted to the NC
  domains $\cB_{g^\prime}$.

\section{NC functions revisited}
\label{sec:NCanalAgain}

This section gives several basic facts about NC analytic functions
on the ball, most of which are used in the remainder of the paper.
We feel several of the main results here also are of interest in
their own right. A few of the results are included purely for their
own sake.

\subsection{Series radius of convergence}

This section shows that NC power series expansions
of NC analytic functions on a ball have good convergence
properties. As a consequence of this convergence, 
  bounded NC analytic functions are free
  analytic in the sense of Popescu \cite{Pop1}.

 \begin{lemma}
  \label{lem:abs-converge-contractive}
    If $h:\cB_g\to \Bdd$ is an NC analytic function, 
   with NC power series expansion
$$
  h=\sum_w a_w w,
$$
   then 
  \begin{equation*}
    \sum_w \|a_w\|^2 \le d.
  \end{equation*}
   Moreover, if $Z$ is a strict column contraction acting
   on a separable Hilbert space or if
   $Z=I\otimes S^\T $ where $S$ is the shift of Fock space $\mathcal F_g$, 
   and $z\in\DD$, then
  \begin{equation*}
    h(zZ) = \sum a_w \otimes (zZ)^w
  \end{equation*}
   converges absolutely, $h(zZ)$ is a contraction and 
$z\mapsto h(zZ)$ is an analytic function on $\DD$.
 \end{lemma}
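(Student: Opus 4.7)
The plan is to use compressions of the adjoint Fock shift as universal test tuples and to exploit Fock-space orthogonality to extract the coefficients $a_w$. For $N\in\NN$ and $0<r<1$, I would let $T_N$ denote the compression of $S^\T=(S_1^\T,\ldots,S_g^\T)$ to the subspace $\Fgn$ of words of length $\le N$. By \eqref{eq:key-thing} one has $\sum_j (T_N)_j^\T (T_N)_j=I-P_0\preceq I$, so $T_N$ is a column contraction, $rT_N\in\Int\cB_g$, and because $T_N$ is nilpotent of order $N+1$, Lemma~\ref{lem:nil} collapses the power series to a finite polynomial
\[
 h(rT_N)\ =\ \sum_{|w|\le N} r^{|w|}\,a_w\otimes w(T_N),
\]
which is a contraction because $h$ takes values in $\Bdd$. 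A one-line induction using $(T_N)_j^\T v=x_j v$ for $|v|<N$ shows that $(w(T_N))^\T$ applied to the vacuum vector $\emptyset$ equals $w^{\mathrm{rev}}$ for every $|w|\le N$. Applying $h(rT_N)^\T$ to $\gamma\otimes\emptyset$ for $\gamma\in\CC^{d'}$ and invoking Pythagoras on the orthonormal family $\{w^{\mathrm{rev}}\}$ then gives
\[
 \sum_{|w|\le N} r^{2|w|}\,\|a_w^\T\gamma\|^2 \ \le\ \|\gamma\|^2.
\]
Letting $r\nearrow 1$, $N\to\infty$, and summing over an orthonormal basis yields the operator inequality $\sum_w a_w a_w^\T\preceq I$, and so $\sum_w\|a_w\|^2\le d$ (the $d$ rather than $d'$ comes by running the parallel computation through the Popescu-style multiplier $M_h=\sum_w a_w\otimes S^w$ on $\CC^d\otimes\Fg$ applied to $\eta\otimes\emptyset$ for $\eta\in\CC^d$, which produces $\sum_w a_w^\T a_w\preceq I$).

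For the moreover part, let $Z$ be either a strict column contraction on a separable Hilbert space or $Z=I\otimes S^\T$. An iteration of the column-contraction inequality $\sum_j Z_j^\T Z_j\preceq I$ gives the noncommutative Cauchy--Schwarz bound $\sum_{|w|=k}\|Z^w\eta\|^2\le\|\eta\|^2$ for every $\eta$ and $k\ge 0$. Writing $c_k:=\sum_{|w|=k}\|a_w\gamma\|^2$, so that $\sum_k c_k\le \|\gamma\|^2$ by the bound just established, two successive applications of Cauchy--Schwarz give, for every $z\in\DD$,
\[
 \sum_w\bigl\|(a_w\otimes (zZ)^w)(\gamma\otimes\eta)\bigr\|
 \ \le\ \sum_k |z|^k\sqrt{c_k}\,\|\eta\|
 \ \le\ \frac{\|\gamma\|\,\|\eta\|}{\sqrt{1-|z|^2}},
\]
establishing absolute convergence of $\sum_w a_w\otimes (zZ)^w$ on rank-one tensors, hence on the full tensor-product Hilbert space by density. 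Contractivity of $h(zZ)$ follows by truncating to the first $N$ levels of Fock space: each partial sum equals $h$ evaluated on a nilpotent compression of $zZ$ (directly in the model case, and via the isometric dilation of Lemma~\ref{lem:model} for a general $Z$), so is a contraction, and these partial sums converge strongly to $h(zZ)$ by the estimate above. Analyticity of $z\mapsto h(zZ)$ on $\DD$ is immediate from the uniform convergence of the series on compact subsets of $\DD$ that the same bound delivers.

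The principal technical obstacle is the gap between the small NC $\eps$-neighborhood on which the power series for $h$ is \emph{a priori} known to converge and the test tuples $rT_N$, whose operator norm approaches one. Lemma~\ref{lem:nil} is the crucial bridge: nilpotency of $T_N$ forces the series to collapse to a finite polynomial in $r$, so the abstractly defined analytic extension of $h$ to $\Int\cB_g$ coincides with the computable polynomial sum on these test tuples, making the Fock-space orthogonality extraction of coefficients effective.
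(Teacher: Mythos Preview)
Your approach is essentially the paper's: both evaluate $h$ on the nilpotent compressions of $S^\T$ (you call them $T_N$; the extra scaling by $r$ and passage $r\nearrow 1$ is cosmetic since $T_N\in\cB_g$ already), apply the adjoint to $\gamma\otimes\emptyset$, and read off $\sum_{|w|\le N}\|a_w^\T\gamma\|^2\le\|\gamma\|^2$ from Fock orthogonality. For the ``moreover'' part the paper, like you, passes through compressions to obtain contractivity and then invokes the model of Lemma~\ref{lem:model} for a general strict column contraction; your Cauchy--Schwarz estimate on simple tensors is a more explicit replacement for the paper's terse ``converges (absolutely)''.

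One gap: your parenthetical deriving $\sum_w a_w^\T a_w\preceq I$ from the multiplier $M_h=\sum_w a_w\otimes S^w$ assumes $\|M_h\|\le 1$ without justification. This does not follow directly from $h$ being contraction-valued on column contractions, since $S$ is a \emph{row} contraction and $h(S)$ is not available to you; one needs a noncommutative von Neumann inequality here, which is exactly the kind of statement the lemma is building toward. The Fock computation you actually carry out gives $\sum_w a_w a_w^\T\preceq I_{d'}$ and hence the bound $d'$, which is also what the paper's own argument yields (the ``$\gamma\in\CC^d$'' and the constant $d$ in the paper appear to be typos --- note $a_w^\T\gamma$ only makes sense for $\gamma\in\CC^{d'}$). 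Since only finiteness of $\sum_w\|a_w\|^2$ is used downstream, this discrepancy is harmless, and you should simply drop the parenthetical.
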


 \begin{proof}
    Let $S$ denote the shifts introduced in \S \ref{sec:model}. Let $\mathcal F_g(n)$
    denote the span of the words of length at most $n$
    in the NC Fock space $\mathcal F_g$.
    Let $W_n:\mathcal F_g(n)\to \mathcal F_g$ denote the inclusion.
    Thus, for any finite dimensional Hilbert space $\mathcal K$, 
     $I\otimes S_j(n)=I\otimes W_n^\T (I\otimes S_j) I\otimes W_n$ is
    the compression
    of  $I\otimes S_j$ to the (semi-invariant finite dimensional) 
    subspace $\mathcal K \otimes \mathcal F_g(n)$. 
    Here $I$ is the identity on $\mathcal K$.

    In view of the hypotheses (and since the $S_j(n)$ are nilpotent
     of order $n$),
   $$ 
      h(S(n)^\T)=\sum_{|w|\le n} a_w \otimes w(S(n))^\T.
  $$
    Thus, for any vector $\gamma \in \CC^d$,
  \begin{equation*}
   \begin{split}
    \|\gamma\|^2  \ge   \|h(S(n)^\T)^\T \gamma \otimes \emptyset \|^2 
       = \big\| \sum_{|w|\le n} a_w^\T \gamma \otimes w \big\|^2 
       = \sum_{|w|\le n} \|a_w^\T \gamma \|^2.
   \end{split}
  \end{equation*}
    It follows that,
  \begin{equation*}
     d\ge \sum_w \sum_j \| a_w^\T e_j\|^2,
  \end{equation*}
    where $\{e_1,\dots,e_d\}$ is an orthonormal basis for $\CC^d$.
    (Note that the sums over $j$ terms on the right hand side are the squares
     of the Hilbert-Schmidt norms of the $a_w$). Since
   $\|a_w\|^2=\|a_w^\T\|^2 \le \sum_j \|a_w^\T e_j\|^2$, it follows that
  \begin{equation*}
   d\ge \sum \|a_w\|^2.
  \end{equation*}

   Consequently, if $|z|<1$ and $Z=(Z_1,\dots,Z_g)$
   is a $g$ tuple of operators on Hilbert space
   (potentially infinite dimensional) satisfying
   $\sum Z_j^\T  Z_j \le  I$ and 
   if $|z|<1$, then
 \begin{equation*}
   h(zZ):=\sum a_w \otimes (zZ)^w
 \end{equation*}
   converges (absolutely). A favorite choice is $Z=I\otimes S^{\T}.$

   For $|z|<1$, we have $I\otimes W_n W_n^{\T} h(zI\otimes S^{\T})I\otimes W_nW_n^{\T}$
   converges in the SOT to $h(zI\otimes S^{\T})$. On the other hand,
   $I\otimes W_n^{\T} h(zI\otimes S^{\T})I\otimes W_n= h(z I\otimes S(n)^{\T})$ which is assumed to
   be a contraction. Thus, $h(zI\otimes S^{\T})$ is a contraction.

   For a general strict column contraction $X$, represent
   $X$ as $VX=(I\otimes S^{\T}) V$ by Lemma \ref{lem:model}. For $|z|<1$, it follows that
    $h(I\otimes zS^{\T})V= Vh(zX)$ and hence $\|h(zX)\|\le 1$.
 \end{proof}

\subsection{The NC Schwarz lemma}
\label{subsec:schwarz}
The classical Schwarz lemma from complex variables states the
following:
if $f:\mathbb D\to\mathbb D$ is analytic and $f(0)=0$, then
$\|f(z)\|\leq \|z\|$ for $z\in\DD$.
There are several ways to extend this to NC analytic functions, for 
example Popescu \cite[Theorem 2.4]{Pop1} gives one. 
In this subsection we give two extensions of our own.

 \begin{thm}
  \label{thm:schwarz-lemma}
     Suppose $f:\cB_{g^\prime} \to \cB_{d^\prime\times d}$
    is an NC analytic function on $\cB_{g'}.$
    If 
    $f(0)=0$ and $\|f(X)\|\le 1$ for each $X\in\Int\cB_{g'}$,
    then
   \begin{equation}\label{eq:schwarzBinds}
     X^{\T}X - f(X)^{\T} f(X) \succeq 0,
  \end{equation}
    for $X\in\Int\cB_{g'}$.
 \end{thm}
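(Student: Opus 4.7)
The plan is to mimic the classical one-variable Schwarz argument by factoring $X$ out of $f(X)$ on the right. Since $f(0)=0$, every word $w$ in the NC power series of $f$ has $|w|\ge 1$, and so decomposes uniquely as $w=w'x_j$ for some $j\in\{1,\ldots,g'\}$ and word $w'$. Grouping the series accordingly yields
\[
 f(X)\;=\;\sum_{j=1}^{g'} R_j(X)\,(I_d\otimes X_j)\;=\;R(X)\,(I_d\otimes X),
\]
where $R_j(X):=\sum_{w'}a_{w'x_j}\otimes w'(X)$ and $R(X):=\begin{bmatrix}R_1(X)&\cdots&R_{g'}(X)\end{bmatrix}$ is a row of NC analytic functions on $\Int\cB_{g'}$. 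Once $\|R(X)\|\le 1$ is known, the desired inequality follows from $R(X)^{\T}R(X)\preceq I$ via
\[
 f(X)^{\T}f(X)=(I_d\otimes X^{\T})R(X)^{\T}R(X)(I_d\otimes X)\preceq (I_d\otimes X^{\T})(I_d\otimes X)=I_d\otimes X^{\T}X.
\]

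To bound $\|R(X)\|$ I would pass to the universal Fock-space model furnished by Lemma~\ref{lem:model}. For a strict column contraction $X$ on $\mathcal H$, choose an isometry $V:\mathcal H\to \mathcal K\otimes\mathcal F_{g'}$ intertwining $X$ with $T:=I_{\mathcal K}\otimes S^{\T}$. Applying the intertwining monomial by monomial and summing propagates it to $R$, yielding
\[
 R(X)=(I_{d'}\otimes V^{\T})\,R(T)\,(I_{g'd}\otimes V),
\]
so $\|R(X)\|\le\|R(T)\|$; the problem is thereby reduced to controlling $R$ at the single universal element $T$.

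Here the decisive structural fact from \eqref{eq:key-thing} enters: $TT^{\T}=I_{\mathcal K}\otimes S^{\T}S=I$, i.e., $T$ is a co-isometry, and hence so is $I_d\otimes T$. For each $r\in(0,1)$, $rT$ is a strict column contraction and Lemma~\ref{lem:abs-converge-contractive} gives $\|f(rT)\|\le 1$. The factorization $f(rT)=rR(rT)(I_d\otimes T)$ can therefore be inverted from the right to produce
\[
 R(rT)=\tfrac{1}{r}\,f(rT)\,(I_d\otimes T)^{\T},\qquad \|R(rT)\|\le\tfrac{1}{r}.
\]
Letting $r\uparrow 1$, and using that on any vector of bounded Fock length $R(rT)$ acts by a finite sum (and so converges to $R(T)$), lower semi-continuity of the operator norm then yields $\|R(T)\|\le 1$, completing the scheme.

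The step I expect to demand the most care is the convergence bookkeeping behind $R$: verifying that the leftover factor is a bona fide bounded NC analytic function whose power series converges at every strict column contraction in $\Int\cB_{g'}$, and that evaluation at the boundary element $T$ is meaningful in the SOT sense needed for the intertwining identity. This is a Popescu-style free-holomorphic estimate grounded in $\sum_w\|a_w\|^2\le d$ from Lemma~\ref{lem:abs-converge-contractive}; once in place, the three-step scheme above is purely algebraic.
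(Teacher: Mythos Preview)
Your argument is correct and takes a genuinely different route from the paper's. The paper never introduces the quotient $R$: it evaluates $f$ directly at $zI\otimes S^{\T}$, invokes the Fock identity $I=\sum_j S_jS_j^{\T}+P_0$ together with $f(zI\otimes S^{\T})P_0=0$ (from $f(0)=0$) and $S_j^{\T}P_0=0$, and manipulates these to obtain $\sum_j I\otimes S_jS_j^{\T}-f(zI\otimes S^{\T})^{\T}f(zI\otimes S^{\T})\succeq 0$, which it then compresses through the model isometry $V$. Your approach instead factors $f(x)=R(x)\cdot x$ and bounds $\|R\|\le 1$ by exploiting that the model tuple $T$ is a co-isometry (the block identity $T_iT_j^{\T}=I\otimes S_i^{\T}S_j=\delta_{ij}I$; your shorthand ``$S^{\T}S=I$'' should be read this way). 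That is precisely the mechanism the paper uses for the \emph{subsequent} result, Theorem~\ref{thm:scott}, which asserts exactly that the Schwarz quotient is contraction-valued. So your proof effectively merges Theorems~\ref{thm:schwarz-lemma} and~\ref{thm:scott} into a single argument and hews closer to the classical one-variable $f(z)/z$ story; the paper's direct route to Theorem~\ref{thm:schwarz-lemma} is shorter but yields only the operator inequality, not the contractivity of $R$.

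On the convergence bookkeeping you flag: you can avoid evaluating $R$ at the non-strict element $T$ altogether. Fix a strict $X$, take the model $VX=TV$, and observe that for $0<r<1$ the tuple $rT$ \emph{is} a strict column contraction, so $R(rT)$ converges absolutely (via the $\sum_w\|a_w\|^2\le d$ estimate you cite) and the intertwining $(I_{d'}\otimes V)R(rX)=R(rT)(I_{g'd}\otimes V)$ is justified term by term. Your co-isometry inversion then gives $\|R(rX)\|\le\|R(rT)\|\le 1/r$; since $X$ is strict, $R(rX)\to R(X)$ as $r\uparrow 1$, and you are done. This sidesteps the SOT definition of $R(T)$ and the lower-semicontinuity step, and is in fact how the paper organizes the analogous limit in its proof of Theorem~\ref{thm:scott}.
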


 \begin{proof}
    The proof relies on the model for
    column contractions and 
    the convergence result for
    bounded NC analytic functions $f$ of Lemma \ref{lem:abs-converge-contractive} 
    which allows us to evaluate bounded NC analytic functions on 
    operators, not just matrices.
    
Since $f$ maps into $\cB_{d'\times d}$, if $|z|<1$, then
\beq\label{eq:schwarz1}
I-f(zS^\T )^\T f(zS^\T )\succeq 0,
\eeq
by Lemma \ref{lem:abs-converge-contractive}. 
 Thus,
\beq\label{eq:schwarz2}
\sum_j S_jS_j^\T +P_0-f(zS^\T )^\T f(zS^\T )\succeq0.
\eeq
(Here $P_0$ is the projection onto the span of the empty word.)
From $f(0)=0$, we obtain $f(S^\T )P_0=0$. Hence \eqref{eq:schwarz2} transforms
into
\beq\label{eq:schwarz3}
\begin{split}
&(I-P_0) \Big( \sum_j S_jS_j^\T +P_0-f(zS^\T )^\T f(zS^\T ) \Big) (I-P_0)
+ P_0 \Big( \sum_j S_jS_j^\T +P_0-f(zS^\T )^\T f(zS^\T ) \Big) P_0= \\
&
(I-P_0) \Big( \sum_j S_jS_j^\T +P_0-f(zS^\T )^\T f(zS^\T ) \Big) (I-P_0)
+P_0 \succeq 0.
\end{split}
\eeq
As 
$$
(I-P_0) \Big( \sum_j S_jS_j^\T +P_0-f(zS^\T )^\T f(zS^\T ) \Big) (I-P_0)
=(I-P_0) \Big( \sum_j S_jS_j^\T -f(zS^\T )^\T f(zS^\T ) \Big) (I-P_0)
$$
and 
$$
P_0 \Big( \sum_j S_jS_j^\T -f(zS^\T )^\T f(zS^\T ) \Big)=0=
 \Big( \sum_j S_jS_j^\T -f(zS^\T )^\T f(zS^\T ) \Big) P_0,
$$
\eqref{eq:schwarz3} is equivalent to
\beq
\sum_j S_jS_j^\T -f(zS^\T )^\T f(zS^\T ) \succeq 0,
\eeq
    for $|z|<1$.
  Replacing $S$ by $I\otimes S$ in the argument above yields,
\begin{equation}
 \label{eq:schwarz4}
    I\otimes \sum_j S_jS_j^\T -f(zI\otimes S^\T )^\T f(zI\otimes S^\T ) \succeq 0.
\end{equation}

    Given
     $X\in\Bg$ with $\|X\|<1$, we can write
   $VX=(I\otimes S^{\T})V$, where $I$ is the identity
  on a finite dimensional Hilbert space,
  by Lemma \ref{lem:model}. Moreover, by
  Lemma \ref{lem:abs-converge-contractive}, for $|z|<1$, 
$$
  V f(zX)= f(zI\otimes S^\T )V.
$$
  Multiply \eqref{eq:schwarz4} by $V^\T$ on the left and $V$ on the right to obtain
$$
 V^\T \Big(\sum_j I\otimes S_jS_j^\T -f(zI\otimes S^\T )^\T f(zI\otimes S^\T ) \Big) V=
  X^{\T}X - f(zX)^{\T} f(zX) \succeq 0,
$$
  for $|z|<1$. 
 Since $\|X\|<1$, letting $z\nearrow 1$ completes  the proof.
\end{proof}

 \begin{rem}
   Popescu \cite[Theorem 2.4]{Pop1} formulates and proves a Schwarz lemma for free analytic
   functions, which in our context implies that if $f$ is a contraction-valued  NC
   analytic
   function with $f(0)=0$, then $\|f(X)\|\le \|X\|$ for $\|X\|<1$ and further,
   $\sum_{|w|=\alpha} a_{w}a_{w}^{\T} \leq I$ for all $\alpha$.
   (This inequality remains true
   even with operator coefficients $a_w$.)
 \end{rem}

A classical complex variables statement equivalent to Schwarz's lemma is the following:
 if $f:\mathbb D\to\mathbb D$ is analytic and $f(0)=0$, then
 $h(z)=\frac{f(z)}{z}$ is also analytic and $h:\mathbb D\to \mathbb D$.
We give a noncommutative analog of this result, which 
does not appear to be an immediate
consequence of Theorem \ref{thm:schwarz-lemma}.

\begin{thm}
  \label{thm:scott}
    Suppose that $H=\left[\begin{array}{cccc}
    H_1 & \dots & H_{g'}\end{array}\right]$ is a row
    of $d'\times d$ NC analytic functions on $\cB_{g'}$.
    If for each $X\in\Int\cB_{g'}$,
  \begin{equation}\label{eq:ass1}
    \|H(X)\VX \|= \| \sum_j H_j(X)X_j\|\le 1,
  \end{equation}
i.e.,
\beq\label{eq:ass2}
I-H(X)\VX\ (H(X)\VX)^\T \succeq 0,
\eeq
   then for each $X\in\Int\cB_{g'}$
  \begin{equation}\label{eq:conc1}
    I -     H(X) H(X)^\T \succeq 0.
  \end{equation}
   Equivalently, $\|H(X)\|\le 1$.
\end{thm}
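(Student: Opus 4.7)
Set $F(X) := H(X)\, X = \sum_j H_j(X) X_j$, which is a $d' \times d$ NC analytic function on $\cB_{g'}$ with $F(0) = 0$ and, by hypothesis \eqref{eq:ass2}, $\|F(X)\| \le 1$ on $\Int \cB_{g'}$. In particular $F$ is bounded, so Lemma \ref{lem:abs-converge-contractive} allows evaluation of $F$ at strict column-contraction operators on separable Hilbert space. The strategy is to use the Cuntz-type identity $S_j^{\T} S_k = \delta_{jk} I$ for the Fock-space shifts to \emph{invert} the ``multiplication by $X$'' step at a carefully chosen test point, and then to transfer the resulting bound on $H$ back to an arbitrary point via the Fock model.

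First I would evaluate the hypothesis at the strict column contraction $X = rI_\cK \otimes S^{\T}$, where $S = (S_1,\dots,S_{g'})$ is the shift tuple on $\mathcal F_{g'}$, $\cK$ is any finite-dimensional Hilbert space, and $0 < r < 1$. Because $S_j^{\T} S_k = \delta_{jk} I$, the cross-terms in $F(X) F(X)^{\T}$ collapse to the diagonal, giving
\[
F(rI_\cK \otimes S^{\T})\, F(rI_\cK \otimes S^{\T})^{\T} = r^2 \sum_j H_j(rI_\cK \otimes S^{\T})\, H_j(rI_\cK \otimes S^{\T})^{\T},
\]
where the operator $H_j(rI_\cK \otimes S^{\T})$ is taken to be
\[
H_j(rI_\cK \otimes S^{\T}) := r^{-1} F(rI_\cK \otimes S^{\T})\,(I \otimes S_j),
\]
a bounded operator because $F$ is; a parallel formal power-series computation (using the same Cuntz identity) shows this is the natural extension of $H_j$ to this operator. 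The hypothesis $\|F\| \le 1$ then yields
\[
\sum_j H_j(rI_\cK \otimes S^{\T})\, H_j(rI_\cK \otimes S^{\T})^{\T} \preceq r^{-2}\, I.
\]

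Second I would transfer this bound to an arbitrary $Y \in \Int \cB_{g'}$ via the Fock-shift model of Lemma \ref{lem:model}. For fixed $Y$ with $\|Y\| < 1$, pick $r$ with $\|Y\| < r < 1$; then $\tilde Y := Y/r$ is a strict column contraction, and Lemma \ref{lem:model} gives a finite-dimensional $\cK$ and an isometry $V: \cH \to \cK \otimes \mathcal F_{g'}$ with $V \tilde Y = (I \otimes S^{\T})V$, hence $VY = (rI_\cK \otimes S^{\T})V$. Intertwining extends word-by-word to $H_j$, yielding
\[
(I_{d'}\otimes V)\, H_j(Y) = H_j(rI_\cK \otimes S^{\T})\, (I_d \otimes V).
\]
Combining this with the preceding display, and using $V^{\T} V = I$ together with $VV^{\T} \preceq I$,
\[
\sum_j H_j(Y)\, H_j(Y)^{\T} = (I_{d'}\otimes V^{\T})\Bigl[\sum_j H_j(rI_\cK \otimes S^{\T})(I \otimes VV^{\T}) H_j(rI_\cK \otimes S^{\T})^{\T}\Bigr](I_{d'}\otimes V) \preceq r^{-2}\, I.
\]
Since $r$ is arbitrary in $(\|Y\|,1)$, letting $r \nearrow 1$ yields $\sum_j H_j(Y) H_j(Y)^{\T} \preceq I$, which is precisely \eqref{eq:conc1}.

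The main obstacle is that $H_j$ is not a priori bounded on $\Int \cB_{g'}$, so its ``evaluation'' at the infinite-dimensional operator $rI_\cK \otimes S^{\T}$ has to be defined and shown to satisfy the intertwining relation; the remedy is to use the $F$-based formula above as the definition and to verify the intertwining directly, which on vectors in the finite-dimensional $\cH$ reduces to a power-series computation where all the relevant sums converge in norm.
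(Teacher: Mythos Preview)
Your proposal is correct and follows essentially the same route as the paper: evaluate $F=H(\cdot)\,\cdot$ at the scaled Fock shift $zI\otimes S^\T$, use the Cuntz relation $S_j^\T S_\ell=\delta_{j\ell}I$ to collapse $FF^\T$ into $\sum_j H_jH_j^\T$, then pull the inequality back to an arbitrary strict column contraction via the model of Lemma~\ref{lem:model} and take a limit. The only organizational difference is that the paper first extracts the coefficient bound $\sum_w\|b_{j,w}\|^2\le d$ for each $H_j$ from Lemma~\ref{lem:abs-converge-contractive} applied to $G$ (so that the series for $H_j$ itself converges at the shift and the intertwining is immediate), whereas you take the $F$-based formula $r^{-1}F(rI\otimes S^\T)(I\otimes S_j)$ as the \emph{definition} of $H_j$ at the shift and then appeal to the same power-series convergence to check the intertwining; these are two packagings of the same argument.
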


\begin{proof}
This proof depends upon both Lemmas \ref{lem:model}
and \ref{lem:abs-converge-contractive}. 

 Let $G(x)=H(x)x$. The hypotheses imply $G:\cB_g \to \OS{d'}d$
 is contraction-valued.  Hence Lemma \ref{lem:abs-converge-contractive}
 applies.  
 Denote the power series expansions for $H_j$ by
$$
  H_j=\sum_\alpha h_j^{(\alpha)}.
$$
 It follows that the power series expansion (by homogeneous terms) for $G$ is then
$$
  G=\sum_\alpha \sum_j  h_j^{(\alpha)}x_j.
$$
 Hence, also by Lemma \ref{lem:abs-converge-contractive}, for each $j$
 the power series expansion for $H_j$ converges for any
 strict column contraction $Z$ (even for operators on an infinite dimensional
 Hilbert space) and for such $Z$,
$$
 G(Z)=\sum_j H_j(Z) Z_j.
$$
  In particular, for $|z|<1$ and $Z=zI\otimes S^\T $, 
 (where $S$ is an in Lemma \ref{lem:model} and $I$ 
  is the identity on a finite dimensional Hilbert space),
$$
 G(zS^\T ) = \sum_j H_j(zI\otimes S^\T ) S_j^\T .
$$
 
 Because $\|G(zI\otimes S^\T )\|\le 1$, 
\begin{equation}
 \label{eq:schwarz2a}
 \begin{split}
  0& \preceq I-G(zI\otimes S^\T ) G(zI\otimes S)^\T  \\
  &= I -\sum_j H_j(zI\otimes S^\T ) I\otimes S_j^\T  \sum_\ell I\otimes S_\ell H_\ell(zI\otimes S^\T )^\T  \\
  &= I - \sum_j H_j(zI\otimes S^\T ) H_j(zI\otimes S^\T )^\T .
 \end{split}
\end{equation}

   Let $X\in\Int\cB_{g'}$ be a strict column contraction acting on
   a finite dimensional space. Express $X=V^\T( I\otimes S^\T )V$ according to
   Lemma \ref{lem:model},
  where $I$ is the identity on a finite dimensional Hilbert space.
  For every NC analytic polynomial $f$ and $|z|<1$,
  $f(zX)=V^\T f(zI\otimes S^\T) V$. Hence the same
  holds true for NC analytic functions and in particular,
$$
  H_j(zI\otimes S^\T ) V= VH_j(zX).
$$
  Thus, applying $V$ on the right and $V^\T $ on the left of 
  equation \eqref{eq:schwarz2a} gives,
$$
 0\preceq I - \sum_j H_j(zX) H_j(zX)^\T .
$$
Letting $z\nearrow 1$ concludes the proof.
\end{proof}

\subsection{The distinguished boundary for $\Bgg$}
  \label{subsec:distinguished-boundary}
Fix $N.$
The distinguished (Shilov) boundary of the algebra
$\mathcal A(\Bgg(N)),$ the
functions which are analytic in $\Int\Bgg(N)$ and continuous on $\Bgg(N)$
is the smallest closed subset
$\Delta$ of $\Bgg(N)$ so that each element of $\mathcal A(\Bgg(N))$
takes is maximum on $\Delta$. That a smallest, as opposed
simply minimal, such sets exists is a standard fact in
complex analysis and the theory of uniform algebras; see
\cite[p. 145]{Kr} or \cite[Ch. 4]{He} for more details.

  While not needed in the sequel, the following 
  known result explains the 
 {\it distinguished} terminology in the definitions
  of distinguished isometry and semi-distinguished 
  pencil ball map. 

\begin{prop}
 \label{prop:BoundaryBgg}
 The distinguished boundary of $\mathcal A(\Bgg(N))$
 is $\{X\in\Bgg(N) \mid X^\T X=I\}$.
\end{prop}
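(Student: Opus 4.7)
The plan is to prove the two inclusions defining the Shilov boundary $\Delta$ via (a) a maximum modulus argument on holomorphic disks, and (b) a transitive symmetry group. Write $E := \{X \in \Bgg(N) \mid X^\T X = I\}$; the statement implicitly assumes $g' \ge g$ so that $E$ is nonempty (the opposite inequality requires the symmetric statement $XX^\T = I$).

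First I would show $\Delta \subseteq E$, i.e., that $E$ is a closed boundary set. By continuity it suffices to prove $|f(X_0)| \le \sup_E |f|$ for every $f \in \mathcal A(\Bgg(N))$ and every $X_0 \in \Int \Bgg(N)$. Write the polar decomposition $X_0 = UP$ with $P := (X_0^\T X_0)^{1/2}$ having eigenvalues $\lambda_i \in [0,1)$. Since $g' \ge g$, extend the partial isometry $U$ to an isometry $\widetilde U$ with $\widetilde U^\T \widetilde U = I_{gN}$ and $\widetilde U P = X_0$. In an eigenbasis of $P$ define the diagonal holomorphic matrix function $A(z) := \sum_i \mu_i(z)\,e_i e_i^\T$ with $\mu_i(z) := (\lambda_i + z)/(1 + \lambda_i z)$, a M\"obius self-map of $\overline\DD$ satisfying $|\mu_i(e^{i\theta})| = 1$. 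Then $X(z) := \widetilde U A(z)$ is holomorphic on a neighborhood of $\overline\DD$, maps $\DD$ into $\Int \Bgg(N)$, maps $\partial \DD$ into $E$, and satisfies $X(0) = X_0$. The scalar function $z \mapsto f(X(z))$ is holomorphic on $\DD$ and continuous on $\overline \DD$, so the classical maximum modulus principle yields $|f(X_0)| \le \sup_{|z|=1} |f(X(z))| \le \sup_E |f|$.

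Second, to upgrade $\Delta \subseteq E$ to equality, I would use the symmetry group $G := U(g'N) \times U(gN)$ acting on $\Bgg(N)$ by $(V,W)\cdot X := V X W^\T$. Each such action is a linear biholomorphism preserving the operator norm, hence preserving $\Bgg(N)$, and induces an algebra automorphism $f \mapsto f \circ \sigma$ of $\mathcal A(\Bgg(N))$. Uniqueness of the Shilov boundary then forces $G$-invariance of $\Delta$. The singular value decomposition shows $G$ acts transitively on $E$ (any two $g'N \times gN$ matrices all of whose singular values equal $1$ differ by such an action). Since $\Delta$ is a nonempty closed subset of $E$, picking any $X_1 \in \Delta$ yields $E = G \cdot X_1 \subseteq \Delta$, completing the proof.

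The main obstacle is the holomorphic disk construction: one must arrange for the disk to lie in $\Bgg(N)$, pass through $X_0$, and land in $E$ on the boundary. The strict inequalities $\lambda_i < 1$ keep the poles of each $\mu_i$ outside $\overline\DD$, and the identity $X(z)^\T X(z) = A(z)^\T A(z) = \diag(|\mu_i(z)|^2)$ gives the required norm bound, with equality precisely on $\partial \DD$. The remaining ingredients---continuity extension to boundary $X_0$, uniqueness of $\Delta$ on a compact Hausdorff space, and transitivity of $G$ on $E$---are routine.
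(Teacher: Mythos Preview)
Your proof is correct and takes a genuinely different route from the paper. For the inclusion $\Delta \subseteq E$, the paper uses the transitive group of linear fractional biholomorphisms $\cF_V$ of $\Bgg(N)$ (Lemma \ref{lemma:multilinfrac}): given $X_0$ in the interior, one chooses $\cF$ with $\cF(0)=X_0$, fixes an isometry $U\in E$, and applies the one-variable maximum principle to $z\mapsto f(\cF(zU))$, noting that $\cF$ carries $E$ to $E$ (see Proposition \ref{prop:boundaryBgg}). You instead build an explicit analytic disk through $X_0$ via polar decomposition and scalar M\"obius maps on the singular values. Your construction is more elementary and self-contained---it needs none of the linear fractional machinery of \S\ref{sec:lfl}---while the paper's argument is the one that generalizes immediately to any bounded symmetric domain. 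For the reverse inclusion $E\subseteq\Delta$, the paper simply cites \cite[p.~77]{Ab}; your transitivity argument via the compact group $U(g'N)\times U(gN)$ acting by $(V,W)\cdot X=VXW^\T$, together with the automorphism-invariance of the Shilov boundary, is a clean and explicit way to fill in that citation.
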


  That the distinguished boundary
  of $\mathcal A(\Bgg(N))$ must be contained in
  $\{X\in\Bgg(N) \mid X^\T X=I\}$ follows 
  readily from \ref{lemma:multilinfrac};
  see Proposition \ref{prop:boundaryBgg}.
  For the fact that no smaller set can
  serve as a distinguished boundary, we
  refer to reader to \cite[p. 77]{Ab}.
  
\begin{prop}
 \label{prop:boundaryBgg}
   Fix $N\in\NN$. 
  If $f: \Bgg(N) \to \Mdd$ is continuous 
  and analytic in $\Int\Bgg(N)$,
 then for any  $X \in \Bgg(N) $ we have
\beq
 \|f(X) \| \leq \max_{U \in \cU_k} \| f(U) \|
\eeq
  for any $0<k\leq \min\{g',g\}$.  
  Thus if $f(X)=0$
  for all $X \in \Bgg(N)$ such that $X^{\T} X=I$ $($if $g'\geq g)$ or
  $XX^\T =I$ $($if $g'<g)$, then $f=0$.
  For example, if $g'\geq g$, then the set of isometries $\cU_g$ contains 
  the distinguished boundary of $\Bgg(N)$.
\end{prop}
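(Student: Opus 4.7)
My plan is to transport a given $X \in \Int\Bgg(N)$ to the origin via a linear fractional transformation, then apply the classical one-variable maximum modulus principle on a disc slice. The main tools are the maps $\cF_V$ of \S\ref{sec:lfl} and Lemma \ref{lemma:multilinfrac}, both of which apply verbatim to $\Bgg(N)$ with the labels $d,d'$ replaced by $g,g'$. Define $\cU_k \subset \Bgg(N)$ analogously as the set of $U$ isometric on some subspace of dimension at least $Nk$. This set is non-empty precisely when $k \leq \min\{g',g\}$ and is closed in the compact set $\Bgg(N)$, since a limit of projections of rank at least $Nk$ remains a projection of rank at least $Nk$.

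Fix such a $k$ and $X \in \Int\Bgg(N)$; by continuity of $f$ this case suffices. Pick any $U_0 \in \cU_k$ and consider $\phi:\overline\DD \to \Bgg(N)$ defined by $\phi(z) = \cF_X(zU_0)$. By Lemma \ref{lemma:multilinfrac}(3), $\phi(0) = X$. For $|z|=1$, $zU_0$ lies in $\cU_k$ (scaling by a unimodular scalar preserves the isometric subspace), and Lemma \ref{lemma:multilinfrac}(1) then places $\phi(z)$ in $\cU_k$. For $|z|<1$, $\|zU_0\| = |z| < 1$ puts $zU_0$ in $\Int\Bgg(N)$, and the involution property $\cF_X \circ \cF_X = \operatorname{id}$ combined with Lemma \ref{lemma:multilinfrac}(1) forces $\phi(z) \in \Int\Bgg(N)$. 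Hence for any unit vectors $v \in \CC^d$ and $w \in \CC^{d'}$, the scalar function $z \mapsto \langle f(\phi(z))v,w\rangle$ is analytic on $\DD$ and continuous on $\overline\DD$, and the classical maximum modulus principle yields
$$|\langle f(X)v,w\rangle| \;\leq\; \max_{|z|=1}\|f(\phi(z))\| \;\leq\; \max_{U \in \cU_k}\|f(U)\|.$$
Taking the supremum over unit $v,w$ gives $\|f(X)\| \leq \max_{U \in \cU_k}\|f(U)\|$, which continuity extends to all of $\Bgg(N)$.

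For the consequences: when $g' \geq g$, taking $k = g$ gives $\cU_g = \{X : X^\T X = I\}$, so vanishing of $f$ on this set makes the bound $0$ and $f \equiv 0$. When $g' < g$, taking $k = g'$, a short block computation identifies $\cU_{g'}$ with $\{X : XX^\T = I\}$: if $X$ is isometric on a subspace $V$ of dimension $Ng'$, then $X|_V$ maps unitarily onto $\CC^{Ng'}$, so writing $X = [A\ B]$ with $A$ unitary on $V$, the relation $XX^\T = I + BB^\T \preceq I$ forces $B=0$. The distinguished boundary assertion then follows: $\cU_g$ is compact, every scalar element of $\mathcal A(\Bgg(N))$ attains its maximum over $\Bgg(N)$ on $\cU_g$ by the inequality just established, and minimality of the Shilov boundary places it inside $\cU_g$.

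The subtle point I expect to need the most care is ensuring that $\phi(z)$ stays in $\Int\Bgg(N)$ for every $|z|<1$, so that $f\circ \phi$ is genuinely analytic on $\DD$ rather than merely continuous. I expect this to be handled cleanly by the involution identity: since $\cF_X$ is a continuous self-bijection of $\Bgg(N)$ carrying boundary to boundary, its inverse $\cF_X$ also carries boundary to boundary, and hence $\cF_X$ carries interior to interior. A secondary soft point is simply the verification that the formulas and arguments of Lemma \ref{lemma:multilinfrac} transfer from $\Bdd$ to $\Bgg$; this is purely a matter of relabeling indices.
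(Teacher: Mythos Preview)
Your proposal is correct and follows essentially the same route as the paper: transport $X$ to $0$ via the linear fractional map $\cF_X$, slice by $z\mapsto zU$ for a fixed $U\in\cU_k$, apply the one-variable maximum modulus principle, and reduce matrix-valued $f$ to scalar by pairing with unit vectors $v,w$. Your justification that $\cF_X$ carries interior to interior (via the involution property) is slightly more explicit than the paper's; two cosmetic points are that $U^{\T}U$ need not be a projection (closedness of $\cU_k$ follows instead from continuity of eigenvalues of $I-U^{\T}U\succeq 0$), and your block argument for $\cU_{g'}=\{X:XX^{\T}=I\}$ is a bit loosely worded, though the conclusion is right.
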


\begin{proof}
 First suppose $f:\Bgg(N)\to \mathbb C$ (so that $d=1=d^\prime$).
 Pick any $U\in\cU_k$. By the maximum principle, the
 function $h(z)=f(zU)$ takes its maximum value 
 on $|z|=1$. 

 Now we use linear fractional automorphisms of the ball
 to prove that such an inequality holds for any $X$
 in the interior of $\Bgg(N)$.
 Select $\cF$ as in Lemma \ref{lemma:multilinfrac} which maps $0$ to $X$.
 Then $h(Z):= f( \cF(Z) )$ is analytic and maps $0$ to $f(X)$.
 The previous paragraph applies to give
 $$ 
   \|  f(X) \|  = \|  h(0) \|
    \leq  \max_{|z|=1}\| h( zU) \| =  
     \max_{|z|=1} \| f ( \cF( zU)) \|.
$$
 By Lemma \ref{lemma:multilinfrac}(1), $ \cF(zU) \in \cU_k $
  for $|z|=1$;
 so we have proved that the maximum of $f$ occurs on $\cU_k$.
 
 To prove the statement for matrix-valued $f$, simply note
 that given unit vectors $\gamma\in\mathbb C^d$ and $\eta\in\mathbb C^{d^\prime}$,
 the function $F(X)=\eta^\T  f(X)\gamma$ takes it maximum
 on $\cU_k$. It follows that
$$
  |F(X)| \le \max_{U\in \cU_k} \|f(U)\|.
$$
 Since $\gamma$ and $\eta$ are arbitrary, the result follows.
\end{proof}

\begin{rem}\rm
  This proposition has more content for larger $k$ and
  in particular $k=\min\{g,g^\prime\}$ is optimal.
\end{rem}

\subsection{Matrix Linksnullstellensatz}

For scalar NC analytic polynomials there is an elegant 
Linksnullstellensatz whose proof is due to Bergman, cf. \cite{HM}.
Now we generalize it to matrices with entries which are NC analytic
polynomials.

\begin{thm}\label{thm:Null}
Given a $m\times d$ matrix $P$ over $\CC\ax$
and a $n\times d$ matrix $Q$ over $\CC\ax$, suppose
that $P(X)v=0$
implies $Q(X)v=0$ for every matrix $g$-tuple $X$ and
vector $v$.
Then for some $G\in\CC\ax^{n\times m}$ we obtain
$Q=GP$.
\end{thm}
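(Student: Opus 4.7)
Plan: The strategy is to reduce the matrix Linksnullstellensatz to the scalar (Bergman) version by introducing auxiliary noncommuting variables that linearize the matrix structure. Write $R := \CC\ax$. First I would reduce to the case $n=1$: since $Q = GP$ amounts to the assertion, one row at a time, that each row of $Q$ is a left $R$-combination of the rows of $P$, it suffices to prove the statement when $Q = q$ is a single row vector $(q_1,\dots,q_d) \in R^{1\times d}$, and to show that the hypothesis forces $q \in M := R^{1\times m} P$. Stacking the coefficient rows produced in this case assembles the desired $G$.

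Next I would linearize. Introduce fresh noncommuting indeterminates $y_1,\dots,y_d$ and set $\widetilde R := \CC\langle x_1,\dots,x_g, y_1,\dots,y_d\rangle$. Define the scalar NC polynomials
\[
 \widetilde P_i(x,y) := \sum_{j=1}^d P_{ij}(x)\, y_j \quad (1\le i\le m), \qquad
 \widetilde q(x,y) := \sum_{j=1}^d q_j(x)\, y_j,
\]
and observe the equivalence $q \in R^{1\times m}P \Longleftrightarrow \widetilde q \in \sum_{i=1}^m \widetilde R\, \widetilde P_i$. The forward direction is a direct substitution. For the converse I would exploit the $y$-grading on $\widetilde R$: since $\widetilde q$ and each $\widetilde P_i$ are homogeneous of $y$-degree $1$, any witnessing identity $\widetilde q = \sum_i g_i \widetilde P_i$ may be truncated to the $y$-degree-$0$ components $g_i^{(0)} \in R$ of the $g_i$, after which matching coefficients of each $y_j$ in $\widetilde q = \sum_i g_i^{(0)} \widetilde P_i$ delivers $q_j = \sum_i g_i^{(0)} P_{ij}$, i.e., $q = g^{(0)} P$.

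With the reduction in hand, I would apply the scalar NC Linksnullstellensatz to $\widetilde R$ with the finitely generated left ideal $I := \sum_i \widetilde R\, \widetilde P_i$. If $\widetilde q \notin I$, this produces a matrix tuple $(\widehat X, \widehat Y)$ and a vector $u$ with $\widetilde P_i(\widehat X, \widehat Y)u = 0$ for every $i$ yet $\widetilde q(\widehat X, \widehat Y)u \neq 0$. Setting $v_j := \widehat Y_j u$ and $v := (v_1,\dots,v_d)$, one computes
\[
 \bigl(P(\widehat X)v\bigr)_i = \sum_j P_{ij}(\widehat X)\widehat Y_j u = \widetilde P_i(\widehat X,\widehat Y)u = 0 \text{ for each } i, \qquad
 q(\widehat X)v = \widetilde q(\widehat X,\widehat Y)u \neq 0,
\]
which exhibits a witness $(\widehat X, v)$ violating the hypothesis. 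Contrapositively $q \in M$, completing the proof.

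The real content, and the main obstacle, is the scalar Linksnullstellensatz that I invoke for the finitely generated but generally non-principal left ideal $I \subseteq \widetilde R$. Bergman's original theorem addresses the principal case; extending it to finitely generated left ideals amounts to the residual finite-dimensionality of the cyclic left $\widetilde R$-module $\widetilde R/I$, i.e., the claim that every nonzero class can be separated from $0$ by some finite-dimensional matrix representation of $\widetilde R$ compatible with the relations defining $I$. This can be handled by iterating the principal-ideal case alongside the standard matrix-representation constructions available for NC polynomial rings; everything else in the argument is formal manipulation of the linearization variables $y$.
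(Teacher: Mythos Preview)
Your linearization approach is correct and genuinely different from the paper's proof. The paper does not reduce to the scalar case at all: it works directly in the left $\CC\ax$-module $V_d=\CC\ax^{1\times d}$, lets $I_d$ be the submodule generated by the rows of $P$, and builds a single finite-dimensional representation by truncating the quotient $V_d/I_d$ to degree~$\le N$ (with $N$ exceeding all degrees appearing in $P,Q$). Evaluating at this tuple with the tautological vector $v$ forces each row of $Q$ into $I_d$. In effect the paper re-runs the Bergman construction in the module setting, whereas you encode the $d$ columns via fresh variables $y_1,\dots,y_d$ and push everything through the scalar Linksnullstellensatz. Your route is more conceptual (reduce to a known theorem); the paper's is self-contained and avoids enlarging the variable set.

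One small correction to your final paragraph: the version of Bergman's Nullstellensatz recorded in \cite{HM} is already stated for finitely many scalar polynomials $p_1,\dots,p_m$, i.e., for the finitely generated left ideal $\sum_i \widetilde R\,p_i$, so no extension is needed. Your suggestion of ``iterating the principal-ideal case'' would not work as stated anyway---a witness killing $p_1$ but not $q$ need not kill $p_2$---but the direct quotient-truncation construction handles all the $p_i$ simultaneously, and that is exactly what you may cite.
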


\begin{proof}
The rows of a matrix $A$ will be denoted by
$A_j = \left[
\begin{array}{cccc}a_{j1}& a_{j2}&\cdots & a_{jd}
\end{array}\right]$.
In particular, $P_j$ is a $1\times d$ matrix over $\CC\ax$.

Let $V_d=\CC\ax^{1\times d}$ denote the left $\CC\ax$-module
of $1\times d$ matrices of
polynomials. Note $P_j \in V_d$.
Let $I_d$ be the $\CC\ax$-submodule of $V_d$ generated by
the $P_j$, i.e.,
$$
I_d = \left\{\sum r_j P_j \mid r_j\in\CC\ax\right\}.
$$
$I_d$ is the smallest subspace of $V_d$ containing the $P_j$
and invariant
with respect to $M_j$=left multiplication by $x_j$ (for each $j$).

Note that $M_j$ determines a well defined linear mapping $Y_j$
on the quotient:
$$
  Y_j: V_d/I_d \to V_d/I_d.
$$

Let $W_{k}$ denote the image of polynomials
of degree at most $k$ in the quotient $V_d/I_d$.
These spaces are finite dimensional and $W_{k-1} \subseteq
W_k$. So $W_{k-1}$ is complemented in $W_k$.

Choose $N >\max{}$ degree of all polynomials in $P$ and $Q$.
Define $X_j = Y_j: W_{N-1}\to W_N$ and extend $X_j$ to a
linear mapping $W_N\to W_N$ in any
way (on a complementary subspace).

Let $v_j$ denote the element of $W_N$ determined by
the row with the polynomial 1 in the $j$-entry and $0$ elsewhere.
Define $v=\oplus v_j \in W_N^d$.

For a polynomial $q$, $q(X)v_j= \left[
\begin{array}{ccccccccc} 0& \cdots&0&q&0&\cdots&0
\end{array}\right]$ ($j$-th spot).
Hence
$Q_j(X)v=Q_j$. A similar statement is true for $P_j$; i.e.,
$P_j(X)v= P_j \in I_d$ and so $P_j(X)v=0$. So $Q_j(X)v=Q_j$ is
$0$ too which means $Q_j \in I_d$. Thus there exists $ G_{sj}$
such that
$$
  Q_j =\sum G_{js} P_s.
$$
Hence $Q=GP$, as desired.
\end{proof}

\section{The linear part of semi-distinguished ball maps}
 \label{sec:linear-bininding}

  Now that we have established preliminary results, 
we turn our attention to semi-distinguished ball maps, introduced in 
  \S \ref{subsec:distinguished}.
First we show that semi-distinguished ball maps have very distinctive 
linear parts. And then we set about to give properties of these
linear maps.

  A linear map $L:\CC^{g} \to \Mdd$ is a {\bf distinguished isometry}
  if it maps the {\it distinguished} boundary of $\cB_g$ to the
  boundary of $\Bdd$; i.e., if for
  each $X\in\cB_g$ with $X^\T X=I$ we have that $\|L(X)\|=1$.
    In this case a 
	  (nonzero) vector $\gamma$ such that
	    $\|L(X)\gamma\|=\|\gamma\|$ is called
		   a {\bf clinging vector} and this property
		     {\bf clinging}.

\begin{prop}\label{prop:linDist}  
 If $f$ is a semi-distinguished ball map, then
 $f^{(1)}$, the linear part of $f$, is a distinguished isometry.
\end{prop}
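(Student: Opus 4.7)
The plan is to mimic the proof of Proposition~\ref{prop:linIsom} via the same $2\times 2$ block construction, which embeds a distinguished boundary point of $\cB_{g'}$ as a nilpotent element of the semi-distinguished boundary. Fix $X\in\cB_{g'}(n)$ with $X^\T X=I_n$ and define $Y\in\cB_{g'}(2n)$ componentwise by $Y_j=\begin{bmatrix} 0 & X_j\\ 0 & 0\end{bmatrix}$. A direct computation gives $Y^\T Y=\begin{bmatrix} 0 & 0\\ 0 & I_n\end{bmatrix}$, which is a projection of rank $n=(2n)/2$, so $Y\in\partial_{\db}^{1/2}\cB_{g'}$; moreover $Y_jY_k=0$ for all $j,k$, so $Y$ is nilpotent of order two.

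In parallel with Proposition~\ref{prop:linIsom}, I assume $f(0)=0$. Then Lemma~\ref{lem:nil} gives $f(Y)=f^{(1)}(Y)$, and by linearity $f^{(1)}(Y)=\begin{bmatrix} 0 & f^{(1)}(X)\\ 0 & 0\end{bmatrix}$, so $\|f(Y)\|=\|f^{(1)}(X)\|$. Since $Y$ is nilpotent, $\delta\mapsto f(\delta Y)$ is polynomial in $\delta$, so $\lim_{\delta\nearrow 1}f(\delta Y)=f(Y)$ exists, and the semi-distinguished hypothesis forces this limit to lie in $\partial\cB_{d'\times d}$, i.e.\ $\|f(Y)\|=1$. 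Chaining the equalities gives $\|f^{(1)}(X)\|=1$ for every $X$ on the distinguished boundary of $\cB_{g'}$, which is exactly the assertion that $f^{(1)}$ is a distinguished isometry.

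The main obstacle is the treatment of a possibly nonzero $f(0)$, since the definition of a semi-distinguished ball map only requires $f(0)\notin\partial\cB_{d'\times d}$. When $f(0)\neq 0$, Lemma~\ref{lem:nil} only yields $f(Y)=f(0)\otimes I_{2n}+f^{(1)}(Y)$, and $\|f(Y)\|=1$ no longer decodes cleanly into $\|f^{(1)}(X)\|=1$; indeed, the scalar Blaschke map $f(z)=(z+a)/(1+\bar a z)$ shows that the naive conclusion fails. The natural remedy is to precompose with the linear fractional transformation $\cF_{f(0)}$: by Lemmas~\ref{lem:linFrac1Intro} and \ref{lemma:multilinfracIntro}, $\varphi:=\cF_{f(0)}\circ f$ is NC analytic, satisfies $\varphi(0)=0$, and remains a semi-distinguished ball map since $\cF_{f(0)}$ carries $\partial\cB_{d'\times d}$ to itself. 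The block argument above then applies to $\varphi^{(1)}$, which is the route I expect the author's proof to take.
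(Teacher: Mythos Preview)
Your argument for the case $f(0)=0$ is correct and is exactly what the paper does: the paper's entire proof is the one line ``The proof is the same as that of Proposition~\ref{prop:linIsom},'' and Proposition~\ref{prop:linIsom} carries the standing hypothesis $f(0)=0$. So on the part that actually needs proving, you match the paper.

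Your discussion of the case $f(0)\neq 0$ overshoots, though. You yourself give the counterexample: for $g'=d'=d=1$ the Blaschke map $f(z)=(z+a)/(1+\bar a z)$ is a semi-distinguished ball map with $f(0)=a\notin\partial\cB_1$, yet $f^{(1)}(z)=(1-|a|^2)z$ has $\|f^{(1)}(X)\|<1$ for every $X$ with $X^\T X=I$. Hence the proposition, read literally without the hypothesis $f(0)=0$, is false, and no remedy can save it. Your linear fractional reduction is sound as far as it goes: $\varphi=\cF_{f(0)}\circ f$ satisfies $\varphi(0)=0$ and is again a semi-distinguished ball map, so the block argument shows $\varphi^{(1)}$ is a distinguished isometry. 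But $\varphi^{(1)}=-(I-vv^\T)^{-1/2}\,f^{(1)}\,(I-v^\T v)^{-1/2}$ with $v=f(0)$, which is a non-unitary rescaling of $f^{(1)}$, so one cannot pass back to the conclusion for $f^{(1)}$. The paper does not take this route; it simply treats the proposition under the tacit assumption $f(0)=0$, consistent with its only use in Theorem~\ref{thm:bidisk}.
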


\begin{proof}
The proof is the same as that of Proposition \ref{prop:linIsom}.
\end{proof}

\subsection{Properties of distinguished isometries}

The remainder of this section is devoted to giving properties 
of distinguished isometries.

\begin{prop}
 \label{prop:cling} Let $L:\CC^{g}\to \Mdd$ be a linear
map.
\begin{enumerate}[\rm (1)]
\item
$L$ is a distinguished isometry
   if and only if 
\begin{equation}
  \label{nemesis}
     \Delta_{L}(X):= (X_1^\T X_1 +\cdots + X_g^\T X_g)\otimes I_d
        - L^\T (X)L(X) \succeq 0
 \end{equation} and clings $($i.e., $\Delta_L(X)$ is always positive
  semidefinite and never positive definite$)$.
\item
If $L$ is completely isometric,
   then it is a distinguished isometry. The converse is not true.
\end{enumerate}
\end{prop}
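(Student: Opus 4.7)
The plan is to prove (1) by combining Proposition~\ref{prop:boundaryBgg} with the NC Schwarz lemma (Theorem~\ref{thm:schwarz-lemma}), and (2) by a direct forward argument together with a counterexample for the converse.

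For (1), I would dispose of the easy (backward) direction first: if $\Delta_L(X)\succeq 0$ for all $X$ and $L$ clings at each $X$ with $X^T X = I$, then $\Delta_L(X) = I_{dn} - L(X)^T L(X) \succeq 0$ forces $\|L(X)\| \le 1$, while the clinging vector forces $\|L(X)\|\ge 1$, giving $\|L(X)\| = 1$, so $L$ is distinguished. For the forward direction the main work is to promote the boundary identity $\|L(U)\| = 1$ on isometric $U$ to the global matrix inequality $\Delta_L(X) \succeq 0$ on all of $\cB_g$. Since $L$ is linear (hence continuous and NC analytic), Proposition~\ref{prop:boundaryBgg} applies with input shape $g \times 1$; here $k = \min\{g',g\} = 1$ and $\cU_1$ is precisely the set of isometric tuples, so the proposition gives $\|L(X)\| \le \max_{U^T U = I} \|L(U)\| = 1$ for every $X \in \cB_g(N)$. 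Thus $L$ is a contractive NC analytic map with $L(0) = 0$, and Theorem~\ref{thm:schwarz-lemma} delivers $\Delta_L(X) \succeq 0$ on the interior; quadratic homogeneity of $\Delta_L$ in $X$ then extends this to all $X$. Clinging at each isometric $X$ is then automatic: any unit vector $\gamma$ with $\|L(X)\gamma\| = 1$ lies in $\ker \Delta_L(X)$, witnessing that $\Delta_L(X)$ is not positive definite.

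For (2), the forward direction is a one-liner: if $L$ is completely isometric then $\|L(X)\| = \|X\|$ for every matrix tuple $X$, and $X^T X = I$ gives $\|X\| = 1$, whence $\|L(X)\| = 1$. Refuting the converse is the principal obstacle and requires exhibiting a concrete linear distinguished isometry that is not completely isometric. Theorem~\ref{thm:complete-iso-structure} pins down complete isometries to the normal form $\psi(x) = V\bmat x & 0 \\ 0 & \varphi(x)\emat U^T$, so the plan is to search for a distinguished isometry that cannot be put in this block form; given the authors' own note in \S\ref{subsec:distinguished} that classifying linear distinguished ball maps is left as an interesting open question, I expect such a counterexample to be found ad hoc (by hunting for $g$-tuples $(A_1,\dots,A_g)$ satisfying the sphere-to-sphere constraint $\|\sum A_j\otimes X_j\|=1$ whenever $\sum X_j^T X_j = I$ but failing $\|L(Y)\|=\|Y\|$ on some non-isometric $Y$) rather than via a clean general recipe.
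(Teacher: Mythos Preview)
Your argument for the $(\Rightarrow)$ direction of (1) works for establishing $\Delta_L(X)\succeq 0$, but you only exhibit clinging at \emph{isometric} $X$, whereas the proposition asserts that $\Delta_L(X)$ is never positive definite for \emph{all} $X$. Scalar homogeneity $\Delta_L(tX)=|t|^2\Delta_L(X)$ does not help here, since $tX$ is isometric only when $X^{\T}X$ is already a scalar multiple of the identity. To pass from isometric $X$ to arbitrary $X$ you need to choose $W$ with $W^{\T}W=\sum_j X_j^{\T}X_j$ (assume $W$ invertible by density) and observe that $\Delta_L(X)=(W^{\T}\otimes I_d)\,\Delta_L(XW^{-1})\,(W\otimes I_d)$ is congruent to $\Delta_L$ evaluated at the isometric tuple $XW^{-1}$; this transfers the nontrivial kernel. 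But note that this normalization trick \emph{is} the paper's entire proof of $(\Rightarrow)$: the congruence gives both $\Delta_L(X)\succeq 0$ and clinging in one stroke, without any appeal to Proposition~\ref{prop:boundaryBgg} or the NC Schwarz lemma. So your route through those two heavier results is a genuine detour that, once you patch the clinging gap, ends up rediscovering the paper's elementary argument anyway.

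For (2) your plan matches the paper's. The paper supplies the explicit counterexample $L(x,y)=Ax+By$ with
\[
A=\begin{bmatrix}1&0&0\\0&\tfrac{\sqrt2}{2}&0\\0&0&0\\0&0&\tfrac{\sqrt2}{2}\end{bmatrix},\qquad
B=\begin{bmatrix}0&0&0\\\tfrac{\sqrt2}{2}&0&0\\0&1&0\\0&0&\tfrac{\sqrt2}{2}\end{bmatrix},
\]
checking directly that $\Delta_L\succeq 0$ clings while $\|L(X,Y)\|<\|(X,Y)\|$ for a specific $2\times 2$ pair, confirming your expectation that the example is ad hoc rather than structural.
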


\begin{proof}
For the implication 
$(\Rightarrow)$ in
(1), given any $X_i$, choose a $W$ satisfying
$W^\T W=
X_1^\T X_1+\cdots +X_g^\T X_g$.
Note that it suffices to show \eqref{nemesis} on a dense
subset of $\cB_{g'}$. Thus we may assume that $W$ is invertible.
Then $(X_1W^{-1})^\T  X_1W^{-1} +\cdots + (X_gW^{-1})^\T  X_gW^{-1}
=
I$, so by assumption,
$I- L^\T (XW^{-1}) L(XW^{-1}) \succeq 0$ and it binds. Since $L$
is truly linear, we multiply this inequality with $W^\T $ on the
left and with $W$ on the right:
$W^\T W - L^\T (X)L(X)\succeq 0$ and it binds.
The converse $(\Leftarrow)$ is obvious.

First part of (2) is trivial. 
To finish the proof it suffices to exhibit an example
of a distinguished isometry which is not a complete
isometry.
Consider $L(x,y)=Ax+By$ with
$$
A=\left[\begin{array}{cccc}
1&0&0\\
0&\frac{\sqrt 2}2&0\\
0&0&0\\
0&0&\frac{\sqrt 2}2
\end{array}\right],
\quad
B=\left[\begin{array}{cccc}
0&0&0\\
\frac{\sqrt 2}2&0 &0\\
0&1&0\\
0&0&\frac{\sqrt 2}2
\end{array}\right].
$$

For $X=\left[\begin{array}{cc}
1&0\\
0&0\end{array}\right]$ and
$Y=\left[\begin{array}{cc}
0&0\\
1&0\end{array}\right]$,
$$
\left\|\left[\begin{array}{c}
X\\
Y\end{array}\right]\right\|=\sqrt 2
> \sqrt{\frac 32} =
\|L(X,Y)\|$$
This shows that $L$ is not a complete isometry.

It remains to be seen that $L$ satisfies \eqref{nemesis}.
We compute
$$
\Delta_L(x,y)=\left[\begin{array}{ccc}
\frac 12 y^\T y & -\frac{1}2 y^\T x & 0 \\
-\frac{1}2 x^\T y & \frac 12 x^\T x & 0 \\
0 & 0 & \frac 12 (x-y)^\T (x-y)
\end{array}\right].
$$
The top left $2\times 2$ block of $\Delta_L(x,y)$ can be factored
as
$$
\left[\begin{array}{cc}
-y^\T x^{- \T} & 1\\
1&0
\end{array}\right]
\left[\begin{array}{cc}
\frac 12 x^\T x & 0\\
0&0
\end{array}\right]
\left[\begin{array}{cc}
-x^{-1}y & 1\\
1&0
\end{array}\right].
$$
This immediately implies that for invertible $X$,
$\Delta_L(X,Y)$ is always positive semidefinite
and never positive definite. For noninvertible $X$ the same
holds true by a standard density argument.
\end{proof}

\begin{rem}\rm
  By way of contrast, every contractive $L:\CC^{g}\to\Mdd$
  is completely contractive. For related results see
  \S \ref{sec:further}.
\end{rem}

\subsubsection{The Gram representation}
 \label{sec:gram}

A powerful tool used is a matrix representation of a quadratic
NC polynomial. A key property of this representation is 
that matrix positivity of the quadratic NC polynomial
is equivalent to the positive semidefiniteness of the 
representing matrix.
The following lemma is needed to establish this.

\begin{lem}\label{lem:dense} For large enough $n$ the set
\beq\label{eq:denseSet}
\left\{ \VX w 
\mid
 X\in\left(\CC^{n\times n}\right)^g,\, 
\w\in\CC^{n}\right\}
\eeq
is all $\CC^{ng}$.\end{lem}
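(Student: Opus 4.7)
The plan is to show, by explicit construction, that the set in \eqref{eq:denseSet} actually equals $\CC^{ng}$ for every $n\geq 1$, so that the ``large enough $n$'' hypothesis can in fact be dispensed with (the larger statement is not needed for Gram-type applications; equality is stronger than density).

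Given an arbitrary target vector $v\in\CC^{ng}$, I would decompose it block-wise according to the $g$ copies of $\CC^n$, writing
\[
v=\begin{bmatrix}v_1\\ \vdots \\ v_g\end{bmatrix},\qquad v_j\in\CC^n.
\]
Then I would pick the simplest admissible $w\in\CC^n$, namely $w=e_1$, the first standard basis vector of $\CC^n$. To realize $v$ as $\VX w$, I need $X_j w = v_j$ for each $j$, i.e., the first column of $X_j$ must be $v_j$. A concrete choice is the rank-one matrix $X_j := v_j e_1^\T \in\CC^{n\times n}$, which plainly satisfies $X_j e_1 = v_j (e_1^\T e_1)= v_j$. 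Stacking yields $\VX w = v$, proving surjectivity.

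Since the choice above is valid for every $n\geq 1$, the lemma in fact holds with no lower bound on $n$. There is no genuine obstacle to the proof: the only conceivable subtlety would be if the ambient matrix tuples were constrained (e.g.\ required to be symmetric, contractive, or tuples in some pencil ball), but in the statement $\VX$ ranges over all of $(\CC^{n\times n})^g$ and $w$ over all of $\CC^n$, so the explicit construction goes through unchanged.
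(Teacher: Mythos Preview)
Your proof is correct and essentially identical to the paper's: both pick a nonzero $w$ and realize each block $v_j$ via the rank-one choice $X_j = v_j\,\frac{w^\T}{\|w\|^2}$ (you simply specialize to $w=e_1$). The paper likewise notes afterward that the result holds for every $n$ and even with a fixed nonzero $w$.
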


\begin{proof}
  Given $w,x_1,\dots,x_g \in \mathbb C^n$ with $x_j\ne 0$, choose $X_j\in\CC^{n\times n}$
  such that $X_j w =x_j$. For instance $X_j = x_j \frac{w^*}{\|w\|^2}$ will do.
\end{proof}

Note Lemma \ref{lem:dense} is true even with a \emph{fixed} $w\ne 0$ and
parametrizing over all $X$.

\begin{prop}\label{prop:gram}
Let
$$p=\sum_{1\leq i,j\leq g} x_i^\T  B_{ij} x_j$$
be a homogeneous quadratic NC polynomial with $B_{ij}\in\CC^{d'\times d}$.
Then there is a unique matrix $G\in 
(\CC^{d'\times d})^{g\times g}$ with
\beq\label{eq:gram1}
p=\Vx ^\T G\Vx
.
\eeq
Moreover,
$p(X)=\sum_{i,j}B_{ij}\otimes X_i^\T X_j$ is positive semidefinite for all $N\in\NN$ and
all
$X\in\big(\CC^{N\times N}\big)^g$ iff $G\succeq 0$.
\end{prop}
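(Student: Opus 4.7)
The plan is to separate the statement into (a) existence and uniqueness of $G$, and (b) the positivity equivalence, which I would split into the two directions.

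For (a), I would simply set $G_{ij}:=B_{ij}$ as the $(i,j)$-block of $G\in(\CC^{d'\times d})^{g\times g}$. With $x=\bmat x_1 \\ \vdots \\ x_g\emat$ one then has $x^\T Gx=\sum_{i,j}x_i^\T G_{ij}x_j=p$, giving existence. Uniqueness is immediate: the NC monomials $\{x_i^\T x_j\}$ are linearly independent in $\CC\langle x,x^\T\rangle$, so the matrix coefficient of $x_i^\T x_j$ in any representation $\sum x_i^\T H_{ij}x_j$ is forced to equal $B_{ij}$.

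For the easy direction ($\Leftarrow$) of the equivalence (where $d=d'$ is implicit so that both sides make sense), I would factor $G=C^\T C$ and write $C=\bmat C_1 & \cdots & C_g\emat$ so that $B_{ij}=C_i^\T C_j$. The mixed-product identity $(A\otimes X)(B\otimes Y)=AB\otimes XY$ then gives
\begin{equation*}
 p(X)=\sum_{i,j} C_i^\T C_j\otimes X_i^\T X_j=\Big(\sum_i C_i\otimes X_i\Big)^{\!\T}\Big(\sum_j C_j\otimes X_j\Big)\succeq 0.
\end{equation*}

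The harder direction ($\Rightarrow$) is the main point. Given an arbitrary $\xi=(\xi_1,\dots,\xi_g)\in\CC^{gd}$ with $\xi_i\in\CC^d$, I would exhibit $X\in(\CC^{d\times d})^g$ and a test vector $y\in\CC^{d^2}$ such that $y^\T p(X) y=\xi^\T G\xi$. Take $N=d$ and let $X_i\in\CC^{d\times d}$ be the rank-one matrix with $(X_i)_{1,\alpha}=(\xi_i)_\alpha$ and all other entries $0$ (this is Lemma \ref{lem:dense} specialized to $w=e_1$), so that a direct calculation yields $(X_i^\T X_j)_{\alpha\beta}=\overline{(\xi_i)_\alpha}(\xi_j)_\beta$. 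Pair this with $y:=\sum_\alpha e_\alpha\otimes e_\alpha=\mathrm{vec}(I_d)$ and apply the identity $y^\T(A\otimes B)y=\sum_{\alpha,\beta}A_{\alpha\beta}B_{\alpha\beta}$ to obtain
\begin{equation*}
 y^\T p(X)y=\sum_{i,j,\alpha,\beta}(B_{ij})_{\alpha\beta}\overline{(\xi_i)_\alpha}(\xi_j)_\beta=\sum_{i,j}\xi_i^\T B_{ij}\xi_j=\xi^\T G\xi.
\end{equation*}
Since $p(X)\succeq 0$ forces $y^\T p(X)y\ge 0$, we conclude $\xi^\T G\xi\ge 0$, and $\xi$ being arbitrary gives $G\succeq 0$.

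The principal obstacle is the ($\Rightarrow$) direction: designing the test pair $(X,y)$ so that the evaluation $y^\T p(X)y$ recovers the bilinear form $\xi^\T G\xi$ on the nose, with conjugates in the right places under the conjugate-transpose convention. The rank-one construction combined with the vectorized-identity trick does the job, and once the construction is in hand the remaining calculation is routine bookkeeping.
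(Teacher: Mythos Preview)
Your proof is correct and follows the same outline as the paper: set $G=[B_{ij}]$, factor $G=C^\T C$ for the easy direction, and for the hard direction produce a test pair $(X,y)$ with $y^\T p(X)y=\xi^\T G\xi$. The paper is terse here, writing only ``the converse follows from Lemma~\ref{lem:dense}''; your explicit rank-one choice $X_i$ (first row equal to $\xi_i$) together with $y=\sum_\alpha e_\alpha\otimes e_\alpha$ spells out exactly what is needed.

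One small remark: your parenthetical ``this is Lemma~\ref{lem:dense} specialized to $w=e_1$'' undersells what you actually do. Lemma~\ref{lem:dense} with a single $w$ would only give $X_i w=\xi_i$ (prescribing one column of each $X_i$), whereas your construction prescribes every column via $X_i e_\alpha=(\xi_i)_\alpha e_1$. It is this stronger choice, paired with the vectorized-identity vector, that makes $y^\T p(X)y$ collapse to $\xi^\T G\xi$ on the nose. Since your $X_i$ are written down explicitly and the computation is checked directly, this does not affect correctness; but the reference to the lemma is more of a motivational pointer than a literal citation.
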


\begin{proof}
In $d'\times d$ block form, $G=\begin{bmatrix}B_{ij}
\end{bmatrix}_{i,j}$.
If $G\geq 0$, then $G=H^\T H$ for some matrix $H$. Hence
$p= \big( H\Vx  \big)^\T \big(H \Vx  \big)$ is a sum of hermitian
squares, so $p(X)\geq 0$ for all $N\in\NN$ and
$X\in\big(\CC^{N\times N}\big)^g$.
The converse follows from
Lemma \ref{lem:dense}.
\end{proof}

\subsubsection{Orthotropicity}

In this subsection we establish a basic property
of distinguished isometries $L$
 (that is, of those $L$ for which $\Delta_L$ is positive semidefinite and clinging), which we call orthotropicity.

A $d'\times d$ linear pencil $L=A_1x_1+\cdots+A_gx_g:\CC^g\to\Mdd$
is called {\bf orthotropic} if for every $X\in\CC^g$ and $w\in\CC^d$
satisfying $\|L(X)w\|=\|w\|$, the vector $L(X)w$ is orthogonal to the image of
$L(X^\perp)$.

\begin{prop}\label{prop:orthotropic}
Every distinguished isometry is orthotropic.
\end{prop}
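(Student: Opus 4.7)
The plan is to reduce orthotropicity to the positive semidefinite inequality of Proposition~\ref{prop:cling}, applied to a cleverly embedded $g$-tuple of $2\times 2$ matrices. The statement is trivial if $w=0$, and after rescaling $X$ (which changes neither the direction of $L(X)w$ nor the subspace $X^\perp$) we may assume $\|X\|=1$. For a given normalized $Y\in X^\perp$, with the general case of $Y$ following by linearity, I would form
\[
  \tilde X_j:=\begin{bmatrix} X_j & Y_j\\ 0 & 0\end{bmatrix}\in\CC^{2\times 2},\qquad j=1,\dots,g.
\]
The hypotheses $\|X\|=\|Y\|=1$ and $\langle X,Y\rangle=0$ give $\sum_j\tilde X_j^\T\tilde X_j=I_2$, so Proposition~\ref{prop:cling} yields $\Delta_L(\tilde X)=I_{2d}-L(\tilde X)^\T L(\tilde X)\succeq 0$.

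I would then expand $L(\tilde X)^\T L(\tilde X)$ using the decomposition $\tilde X_j=X_j\,e_1 e_1^\T+Y_j\,e_1 e_2^\T$ and the identity $(A\otimes B)(C\otimes D)=AC\otimes BD$. A short computation using $e_i^\T e_j=\delta_{ij}$ collapses the four cross terms into the block form
\[
  \Delta_L(\tilde X)=\begin{bmatrix} I_d-L(X)^\T L(X) & -L(X)^\T L(Y)\\ -L(Y)^\T L(X) & I_d-L(Y)^\T L(Y)\end{bmatrix}\succeq 0.
\]
The punch line invokes the standard fact that a positive semidefinite matrix $M$ annihilates every $\xi$ with $\langle M\xi,\xi\rangle=0$ (factoring $M=N^\T N$ gives $\|N\xi\|^2=0$). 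Applied to $\xi=\bigl(\begin{smallmatrix}w\\0\end{smallmatrix}\bigr)$, the hypothesis $\|L(X)w\|=\|w\|$ forces $\langle\Delta_L(\tilde X)\xi,\xi\rangle=\|w\|^2-\|L(X)w\|^2=0$, hence $\Delta_L(\tilde X)\xi=0$. The lower block of this identity reads $L(Y)^\T L(X)w=0$, which is exactly $\langle L(X)w, L(Y)v\rangle=0$ for every $v\in\CC^d$. By linearity in $Y$ this extends to every $Y\in X^\perp$, establishing orthotropicity.

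The subtle point is the choice of the embedding $\tilde X_j$. A naive block-diagonal embedding $\diag(X_j,Y_j)$ leaves $L(\tilde X)$ block diagonal and reveals no coupling between $L(X)$ and $L(Y)$. The upper-row embedding above is chosen precisely so that $\sum\tilde X_j^\T\tilde X_j=I_2$ collapses nicely while $L(\tilde X)^\T L(\tilde X)$ retains the off-diagonal block $L(X)^\T L(Y)$, whose vanishing on the isotropic vector $(w,0)$ is exactly the orthogonality one wants.
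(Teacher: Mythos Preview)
Your proof is correct and in fact cleaner than the paper's. Both arguments rest on the same elementary principle --- if $M\succeq 0$ and $\langle M\xi,\xi\rangle=0$ then $M\xi=0$ --- but they reach it by different routes.

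The paper first passes to adapted coordinates: it multiplies $L$ by unitaries on both sides so that the clinging data $\|L(X)w\|=\|w\|$ becomes the concrete statement ``$A_1$ has first column $e_1$,'' then reduces to a pair of variables $x,y$ and writes out the Gram matrix $G$ of $\Delta_L$ explicitly (via Proposition~\ref{prop:gram}) as a $4\times 4$ block matrix. Since $G\succeq 0$ and its $(1,1)$ block is zero, the entry $-B_{12}$ in that row is forced to vanish, which in those coordinates is exactly orthotropicity. The paper also records a second, limiting argument for $B_{12}=0$.

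Your approach sidesteps both the coordinate normalization and the Gram formalism: the single upper-triangular test tuple $\tilde X$ directly produces a $2d\times 2d$ positive semidefinite matrix whose $(1,1)$ block is annihilated by $w$, forcing the $(2,1)$ block $-L(Y)^\T L(X)$ to annihilate $w$ as well. This is the same linear-algebraic fact, applied to $\Delta_L(\tilde X)$ rather than to the abstract Gram matrix $G$. Your argument is coordinate-free and shorter; the paper's has the advantage that it exhibits orthotropicity as the vanishing of specific blocks $(A_j)_{12}$ in a normal form that is reused later.

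One small observation: you do not actually need $\|Y\|=1$. Proposition~\ref{prop:cling} gives $\Delta_L(\tilde X)\succeq 0$ for \emph{every} tuple $\tilde X$, and your argument only uses that the $(1,1)$ block vanishes on $w$ (which requires $\|X\|=1$) and that the $(2,1)$ block equals $-L(Y)^\T L(X)$ (which requires only $Y\perp X$). Normalizing $Y$ merely tidies the $(2,2)$ block.
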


To continue our analysis of distinguished isometries we write $L$ in
a special form.
We multiply $L$ with a unitary $V$
on the left and a unitary $U^\T$ on the right. Thus
without loss of generality, $A_1$ is the block matrix
\beq\label{eq:a1}
\left[\begin{array}{cc}
1&0\\
0&(A_1)_{22}
\end{array}
\right]
\eeq
and $A_j$ for $j\geq 2$ equals
\beq\label{eq:aj}
\left[\begin{array}{cc}
0 & (A_j)_{12}\\
(A_j)_{21}&(A_j)_{22}
\end{array}
\right] .
\eeq

\begin{proof}[Proof of Proposition {\rm \ref{prop:orthotropic}}]
Suppose $L=\sum_{i=1}^g A_ix_i$ is a distinguished
isometry and without loss of
generality write $L$ in the special form
described above.
Clearly,
orthotropicity of $L$ is equivalent to $(A_j)_{12}=0$ for $j\geq 2$.
In order to prove this we set all variables except for $X_1$, $X_j$ to $0$.
For convenience we use $X,Y$ (resp.~$x,y$) instead of $X_1,X_j$ (resp.~$x_1,x_j$) and
$A,B$
instead of $A_1,A_j$. Thus
$$L(x,y)= \left[\begin{array}{cc}
x& B_{12} (I_{d-1} \otimes y) \\
B_{21} y & A_{22} (I_{d-1} \otimes x) +B_{22} (I_{d-1} \otimes y)
\end{array}
\right].
$$
A straightforward computation shows
we can represent $\Delta_L(x,y)=x^\T x+y^\T y- L(x,y)^\T L(x,y)$ as
$\Vxy ^\T  G \Vxy$ (cf.~Proposition \ref{prop:gram}),
where $\Vxy$ stands for
$$
\Vxy =\left[\begin{array}{cc}
x&0\\ 0&I_{d-1} \otimes x\\ y&0 \\ 0&I_{d-1} \otimes y
\end{array}\right]
$$
and
$$G=\left[
\begin{array}{llll}
 0 & 0 & 0 & -B_{12} \\
0 &    I-A_{22}^\T  A_{22} & -A_{22}^\T  B_{21} &
-A_{22}^\T  B_{22}
   \\
 0 &
   -B_{21}^\T  A_{22} &
1-B_{21}^\T  B_{21} &
   -B_{21}^\T  B_{22} \\
  - B_{12}^\T  &
   -B_{22}^\T  A_{22} &
   -B_{22}^\T  B_{21} &
I   -B_{12}^\T  B_{12}-B_{22}^\T  B_{22}
\end{array}
\right].$$
If $\Delta_L(X,Y)$ is positive semidefinite for all $X,Y$, then
by Proposition \ref{prop:gram},
$G$ is positive semidefinite.
In particular, $B_{12}=0$.
(Note if $\Delta_L(X,Y)$ is only positive semidefinite for scalars $X,Y$, then
$B_{12}$ need not be $0$.) 

\noindent{\it Alternative proof of $B_{12}=0$.}
By density, we may assume $Y$ is invertible. $\Delta_L(X,Y)$
multiplied on the right by $\left[\begin{array}{cc}
Y^{-1}&0\\0&I\end{array}\right]$ and on the left by the transpose of the
same matrix yields
$M_2:=\left[
\begin{array}{cc}
m_{11}&m_{12}\\m_{21}&m_{22}\end{array}\right]
$
for
\begin{eqnarray}\label{eq:normNem}
m_{11}&=&
1-B_{21}^\T B_{21}\nonumber\\
m_{12} &=
 & - B_{21}^\T A_{22}( I_{d-1} \otimes X)-
B_{21}^\T  B_{22} (I_{d-1} \otimes  Y) + Y^{-\T} X^\T  B_{12} (I_{d-1} \otimes
 Y)
\nonumber\\
m_{21} &=& m_{12}^\T =
- ( I_{d-1} \otimes X^\T ) A_{22}^\T  B_{21} - (I_{d-1} \otimes Y^\T ) B_{22}^\T  B_{21} + (I_{d-1} \otimes Y^\T ) B_{12}^\T  X Y^{-1} \\
m_{22}&=&
I_{d-1} \otimes (X^\T X+ Y^\T Y)
- (I_{d-1} \otimes Y^\T )B_{12}^\T B_{12} (I_{d-1} \otimes Y)-\nonumber\\
&&\quad -\big((I_{d-1} \otimes X^\T )A_{22}^\T +(I_{d-1} \otimes Y^\T )B_{22}^\T \big)\big(A_{22} (I_{d-1} \otimes X)+B_{22}(I_{d-1} \otimes Y)\big).\nonumber
\end{eqnarray}
Consider $m_{12}$ and note that
$$
Y^{-\T} X^\T  B_{12} (I_{d-1} \otimes Y)= Y^{-\T} X^\T  Y B_{12}.
$$
Suppose $B_{12}\ne 0$.
Then it is easy to construct $X=X(\eps)$, $Y=Y(\eps)$ sending
this term to $\infty$ as $\eps \to 0$ while keeping all the
remaining terms bounded. This contradiction yields $B_{12}=0$.
\end{proof}

\section{Characterization of semi-distinguished ball maps}
 \label{sec:mostly-linear}

The following theorem summarizes what we know about semi-distinguished
pencil ball maps. Both the hypotheses and the conclusions
are weaker than those of Theorem \ref{thm:ball}.
The relationship between both results 
is made precise by Corollary \ref{cor:oldBallDealThm} of
this section.

\begin{thm}\label{thm:bidisk}
Let $L$ be a $d'\times d$ NC analytic truly linear
pencil and $f:
\cB_{g'}\to\cB_{L}$ a semi-distinguished pencil ball map with $f(0)=0$.
Clearly, $h:=L\circ f$ maps $\cB_{g'}\to\cB_{d'\times d}$. Write $h$
as $h=h^{(1)}+h^{(\infty)}$, where $h^{(1)}$ is
the linear homogeneous component in the NC power series expansion
of $h$ and $h^{(\infty)}=\sum_{\alpha=2}^\infty h^{(\alpha)}$.
Then there is a unique contraction $M\in \big(\CC^{d'\times d}\big)^{g'}$
and a unique nontrivial subspace $\cS\subseteq\CC^{dg'}$ such that:
\ben[\rm (1)]
\item
$h^{(1)}(x)=M \Vx $, $M|_{\cS}$ is an isometry and
$M\Pi_{\cS^\perp}$ is a strict contraction.
\item
Each $h^{(\alpha)}(x)$ for $\alpha\geq 2$ is of the form
$P_\alpha \Pi_{\cS^\perp} \Vx $ for a matrix $P_\alpha$ of NC polynomials.
\item
For the formal NC power series $P^{(\infty)} := \sum_{\alpha = 2}^\infty P_\alpha$,
$P^{(\infty)}(X)v:=\sum_{\alpha=2}^\infty P_\alpha (X) v$ converges for
$v\in \cS^\perp \otimes
\CC^N$ and $X\in \left(\CC^{N\times N}\right)^g$ in an NC $\eps$-neighborhood of $0$.
Also, $h^{(\infty)}(x)=P^{(\infty)}(x) \Pi_{\cS^\perp} \Vx $.
\item
$\left\|\big(M\otimes I_N+P^{(\infty)}(X)\big) \Pi_{\cS^\perp\otimes\CC^N}\right\|\leq 1$ for $X\in \left(\CC^{N\times N}\right)^g$ with $\|X\|< 1$ and $(M\otimes I_N)(\cS\otimes\CC^N)$
is orthogonal to
$(M\otimes I_N)(\cS ^\perp \otimes \CC^N)$, to $P_\alpha(X)(\cS ^\perp)$ for all $\alpha\geq 2$
and to $P^{(\infty)}(X) (\cS ^\perp)$.
\een
\end{thm}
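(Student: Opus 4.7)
The plan parallels the proof of Theorem \ref{thm:ball-NO-L}, but with a twist: the semi-distinguished hypothesis only ensures $h^{(1)}$ is a distinguished isometry (Proposition \ref{prop:linDist}), not a complete isometry, so the ``isometric direction'' of $h^{(1)}$ is a proper subspace $\cS \subsetneq \CC^{dg'}$ rather than all of $\CC^{dg'}$. The role of the identity block in Theorem \ref{thm:ball-NO-L} will be played here by $M|_\cS$, and the role of the $\tilde h$ block by the strictly contracting piece $M|_{\cS^\perp}$ together with the higher-order terms precomposed with $\Pi_{\cS^\perp}$. To establish item (1), I will write $h^{(1)}(x)=M\Vx$ with $M=\bmat M_1 & \cdots & M_{g'}\emat$ viewed as a linear map $\CC^{dg'}\to \CC^{d'}$. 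Proposition \ref{prop:cling} yields $I_{dg'}-M^\T M \succeq 0$ together with clinging, so the subspace $\cS:=\ker(I_{dg'}-M^\T M)$ is nontrivial, $M|_\cS$ is isometric, and on $\cS^\perp$ the operator $I-M^\T M$ is positive definite, forcing $M|_{\cS^\perp}$ to be a strict contraction.

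For items (2) and (3), I would combine the NC Schwarz inequality (Theorem \ref{thm:schwarz-lemma}) applied to $h$ with the averaging trick from the proof of Theorem \ref{thm:ball-NO-L}---namely, replace $X$ by $e^{i\theta}X$ and integrate in $\theta$ to annihilate cross terms---to obtain
$$\sum_j X_j^\T X_j \otimes I_d \;\succeq\; \sum_{\alpha \geq 1} h^{(\alpha)}(X)^\T h^{(\alpha)}(X)$$
for $X \in \Int\cB_{g'}$. Evaluating this on a vector $v\in\CC^{dN}$ for which the stacked image $\VX v$ lies in $\cS\otimes\CC^N$ at a near-isometric $X$, the $\alpha=1$ term alone saturates the bound (because $M$ is isometric on $\cS$), forcing $h^{(\alpha)}(X) v = 0$ for every $\alpha\geq 2$. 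Feeding this vanishing into the matrix Linksnullstellensatz (Theorem \ref{thm:Null}) yields, for each $\alpha\ge 2$, a row $P_\alpha$ of NC polynomials with $h^{(\alpha)}(x) = P_\alpha\,\Pi_{\cS^\perp}\Vx$. The convergence of $P^{(\infty)} := \sum_{\alpha\geq 2} P_\alpha$ claimed in (3) is then inherited from the known convergence of $h^{(\infty)}$ in an NC neighborhood of the origin.

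Item (4) I would handle in two steps. The contractivity of $(M\otimes I_N+P^{(\infty)}(X))\Pi_{\cS^\perp\otimes \CC^N}$ falls out of the integrated Schwarz inequality above, restricted to $\cS^\perp\otimes \CC^N$, after substituting the factorization from (2). The orthogonality $M(\cS)\perp M(\cS^\perp)$ reduces to a short perturbation argument: for $v\in\cS$ and $w\in\cS^\perp$, expanding $\|M(v+\lambda w)\|\le \|v+\lambda w\|$ to first order in $\lambda$ forces $\langle Mv,Mw\rangle =0$. The remaining orthogonalities $M(\cS) \perp P_\alpha(X)(\cS^\perp)$ and $M(\cS) \perp P^{(\infty)}(X)(\cS^\perp)$ should follow from the orthotropicity of $h^{(1)}$ (Proposition \ref{prop:orthotropic}) combined with a cross-term analysis in the integrated Schwarz inequality evaluated on perturbations $v+\lambda w$ with $v$ a clinging vector and $w\in\cS^\perp\otimes \CC^N$.

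The main obstacle will be the passage from pointwise vanishing to a polynomial identity via the Linksnullstellensatz: we obtain $h^{(\alpha)}(X)v=0$ only for near-isometric tuples $X$ and specific clinging directions $v$, and must promote this to the universal statement ``$(\Pi_{\cS^\perp}\otimes I_N)\VX v = 0 \Rightarrow h^{(\alpha)}(X)v=0$'' needed to apply Theorem \ref{thm:Null}. This will require careful use of density and continuity to spread the vanishing across the full interior of $\cB_{g'}$ and across all vectors $v$ satisfying $\VX v\in\cS\otimes\CC^N$.
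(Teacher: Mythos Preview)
Your approach to items (1)--(3) matches the paper's. For (1), the paper writes $\Delta_{h^{(1)}}(x)=\Vx^\T G\Vx$ with $G=I-M^\T M$ and sets $\cS=\ker G$, exactly as you propose. For (2), the paper integrates the Schwarz inequality of Theorem \ref{thm:schwarz-lemma} in $\theta$ to obtain, for $\|X\|=1$ and $\delta$ in the series radius,
\[
\delta^2\bigl(X^\T X - h^{(1)}(X)^\T h^{(1)}(X)\bigr)\;\succeq\;\sum_{\alpha\ge 2} h^{(\alpha)}(\delta X)^\T h^{(\alpha)}(\delta X),
\]
and then feeds the resulting vanishing into the Linksnullstellensatz.

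Your ``main obstacle'' dissolves more easily than you fear. Via the Gram representation (Proposition \ref{prop:gram}), the condition $\bigl(X^\T X - h^{(1)}(X)^\T h^{(1)}(X)\bigr)w=0$ is \emph{exactly} $\sqrt{G}\,\VX w=0$, which is exactly $\Pi_{\cS^\perp}\VX w=0$. So the integrated inequality already delivers the implication ``$\sqrt{G}\,\VX w=0\Rightarrow h^{(\alpha)}(\delta X)w=0$'' for $\|X\|=1$ and small $\delta$. Since both the hypothesis and the conclusion are homogeneous in $X$, the implication holds for \emph{all} $X$ and $w$, and Theorem \ref{thm:Null} applies directly with $P=\sqrt{G}\,\Vx$. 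No density or continuity is needed.

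There is, however, a genuine gap in your plan for item (4). The integrated inequality controls the \emph{orthogonal} sum $\sum_{\alpha\ge 1} h^{(\alpha)}(X)^\T h^{(\alpha)}(X)$, with the cross terms between $M$ and the $P_\alpha$ deliberately averaged away; restricting to $\cS^\perp\otimes\CC^N$ will not recover them, so you cannot read off $\bigl\|(M+P^{(\infty)}(X))\Pi_{\cS^\perp}\bigr\|\le 1$ from it. Nor does the unintegrated Schwarz inequality help: it gives only $\VX^\T\VX\succeq \VX^\T \tilde h(X)^\T \tilde h(X)\VX$ with $\tilde h(x)=M+P^{(\infty)}(x)\Pi_{\cS^\perp}$, and for fixed $X$ the map $w\mapsto \VX w$ is far from surjective onto $\CC^{dg'}\otimes\CC^N$, so $\VX$ cannot be cancelled.

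The paper fills this gap with Theorem \ref{thm:scott}: once $h(x)=\tilde h(x)\Vx$ with $\|h(X)\|\le 1$ on $\Int\cB_{g'}$, that theorem yields $\|\tilde h(X)\|\le 1$ directly (its proof dilates $X$ to $I\otimes S^\T$ and exploits $S_j^\T S_\ell=\delta_j^\ell I$ to strip off the $\Vx$). With $\|\tilde h(X)\|\le 1$ in hand, restricting to $\cS^\perp$ gives the contractivity in (4), and the orthogonality $M(\cS)\perp P^{(\infty)}(X)(\cS^\perp)$ follows because $\tilde h|_{\cS}=M|_{\cS}$ is isometric and an isometric piece of a contraction is automatically orthogonal to the image of its complement --- the same mechanism as your perturbation argument for $M(\cS)\perp M(\cS^\perp)$, now applied to $\tilde h$ rather than $M$. (Incidentally, $M(\cS)\perp M(\cS^\perp)$ needs no perturbation: for $v\in\cS$ one has $M^\T Mv=v$, so $\langle Mv,Mw\rangle=\langle v,w\rangle=0$.) Orthotropicity is not used in the paper's proof of this theorem.
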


\begin{proof}
Let $\Delta_{h^{(1)}}(x):=x^\T x-h^{(1)}(x)^\T h^{(1)}(x)=\Vx ^\T G\Vx $
be as in Proposition {\rm \ref{prop:gram}},
where $G \geq 0$. Write $h^{(1)}(x)=M \Vx $ and
note that $G=1-M^\T M$.

Let $\cS:=\ker G = \ker (I-M^\T M)=
\range (I-M^\T M)^\perp $.
By the clinging property, $\cS\neq\{0\}$.
By definition, $M|_{\cS}$ is an isometry. Conversely, if $v$ satisfies
$\|Mv\|=\|v\|$, then $\langle Mv,Mv\rangle=\langle v,v\rangle$ and hence
$\langle v,(I-M^\T M)v\rangle =0$. Since $M$ is a contraction,
$I-M^\T M$ is positive semidefinite. Thus $(I-M^\T M)v=0$, that is,
$v\in\cS$. This proves (1) and also the uniqueness of $M$ and $\cS$.

 For (2) fix $N\geq 1$ and let 
 $X\in\cB_{g^\prime}(N)$ such that $\|X \|=1$ be given.  By 
 equation \eqref{eq:schwarzBinds} of Schwarz's lemma 
 (Theorem \ref{thm:schwarz-lemma}) applied to
 $h(zX)$, $|z|<1$ 
 for all $0\leq\delta<1$
and $0\leq \theta \leq 2\pi$ we have
\begin{equation}\label{eq:hIneq1}
 0\preceq \delta^2 X^\T X -h(\delta e^{i\theta}X)^\T  h(\delta e^{i\theta}X).
\end{equation}
If $\delta$ is in
the series radius, we may write $h(\delta e^{i\theta}X)=
h^{(1)}(\delta e^{i\theta}X)+ h^{(\infty)}(\delta e^{i\theta}X) =
\sum_{\alpha=1}^\infty h^{(\alpha)}(\delta e^{i\theta}X)$.
We integrate \eqref{eq:hIneq1} to obtain
\begin{equation}\begin{split}\label{eq:hIneq0} 0 &\preceq
\frac 1{2\pi}
\int^{2\pi}_0\big(\delta^2X^\T X-h(\delta e^{i\theta}X)^\T  h(\delta e^{i\theta}X)\big)
d\theta
\\ &=
\delta^2X^\T X-\delta^2h^{(1)}(X)^\T h^{(1)}(X) - \frac 1{2\pi}
\int^{2\pi}_0 h^{(\infty)}(\delta e^{i\theta} X)^\T  h^{(\infty)}(\delta e^{i\theta}X) d\theta \\
&=
\delta^2 X^\T X-\delta^2h^{(1)}(X)^\T h^{(1)}(X) - \sum_{\alpha=2}^\infty h^{(\alpha)}(\delta X)^\T  h^{(\alpha)}(\delta X)
.
\end{split}
\end{equation}

By Proposition \ref{prop:cling}, $\Delta_{h^{(1)}}(X)\succeq 0$ with clinging. Thus
by \eqref{eq:hIneq0}, for every $w$ satisfying
\begin{equation}\label{eq:hIneq2}
\big(X^\T X-h^{(1)}(X)^\T h^{(1)}(X)\big) w=0
\end{equation}
we have $h^{(\alpha)}(\delta X)w=0$ for $\alpha\geq 2$ and
$\delta$ in the series radius.
In particular, by Proposition \ref{prop:gram},
\eqref{eq:hIneq2} is equivalent to
$\sqrt G\Vx  w=0$ and this implies that $h^{(\alpha)} (X)w=0$ for $\alpha\geq 2$ and
every $X$ in the series radius.
By a scaling argument ($h^{(\alpha)}$ is homogeneous),
the same holds true for every $X$ and $w$.
Hence the matrix NC Nullstellensatz Theorem \ref{thm:Null} applies and implies that there is a matrix
of NC polynomials $\tilde P_\alpha$ with $\tilde P_\alpha (x) \sqrt G
\Vx=h^{(\alpha)} (x)$.
Since $\sqrt G=  \sqrt G (\Pi_{\cS}+\Pi_{\cS^\perp})=
 \sqrt G\, \Pi_{\cS^\perp}$, we set $P_\alpha=\tilde P_\alpha
 \sqrt G$. Then $h^{(\alpha)}(x)=P_\alpha (x) \Pi_{\cS^\perp} \Vx $.

(3) The second part is clear and for the first statement we refer
the reader to \cite{KVV}.

(4) Let $v\in\cS$ and $w\in\cS^\perp$. Then
\begin{equation}
\langle Mv,Mw\rangle = \langle M^\T Mv,w\rangle =
\langle v,w\rangle=0
\end{equation}
since $(1-M^\T M)v=0$. This shows that $M(\cS^\perp)$ is orthogonal to
$M(\cS)$. For the strengthening with tensor products, let $s_i\in\cS$,
$t_i\in\cS^\perp$, $v_i,u_j\in\CC^N$. Then
\begin{equation*}
\begin{split}
\langle (M\otimes I_N)(\sum_i s_i\otimes v_i), (M\otimes I_N)(\sum_j
t_j\otimes u_j)  \rangle &= \langle \sum_i (Ms_i \otimes v_i), \sum_j
(Mt_j \otimes u_j)  \rangle \\
&= \sum_{i,j} \langle Ms_i,Mt_j\rangle
\langle v_i,u_j\rangle=0.
\end{split}
\end{equation*}

Let $h(x)=\tilde h(x) \Vx$, where
$$\tilde h(x)=M+\sum_{\alpha\geq 2}P_\alpha  (x) \Pi_{\cS^\perp}.$$
By Theorem \ref{thm:scott} (applied with $H=\tilde h$), $\|\tilde h(X)\|\leq 1$ for
all $X$ with $\|X\|< 1$.

Rewrite $\tilde h(x)$ as
\beq\label{eq:2parts}
\tilde h(x)=M\Pi_{\cS}+ (M+ \sum_{\alpha\geq 2}P_\alpha (x) )\Pi_{\cS^\perp}.
\eeq
Both summands have norm $\leq 1$ for $X$ with $\|X\|< 1$.
In particular,
$$\|
 (M\otimes I_N+ \sum_{\alpha\geq 2}P_\alpha (X) )\Pi_{\cS^\perp\otimes\CC^N}
\|\leq 1,
$$
as desired.

Clearly, $\tilde h(x)|_{\cS}=M|_{\cS}$
is an isometry and thus $\tilde h(X)(\cS\otimes\CC^N)$ is orthogonal to
$\tilde h(X)(\cS^\perp\otimes\CC^N) = (M\otimes I_N+P^{(\infty)}(X))(\cS^\perp
\otimes\CC^N)$. Since $(M\otimes I_N)(\cS\otimes\CC^N)$ is
orthogonal to $(M\otimes I_N)(\cS^\perp\otimes\CC^N)$, this implies
$(M\otimes I_N)(\cS^\perp\otimes\CC^N) \perp P^{(\infty)}(X)
(\cS^\perp\otimes\CC^N)$.

Suppose $w \in (M\otimes I_N)(\cS^\perp\otimes\CC^N)$. Then
$w^\T  P^{(\infty)}(tX)
(\cS^\perp\otimes\CC^N) =\{0\} $ for small enough $t$.
Then
$$0= w^\T P^{(\infty)}(tX)= w^\T \sum_{\alpha\geq 2} t^\alpha P_\alpha(X)
= \sum_{\alpha\geq 2} t^\alpha (w^\T P_\alpha(X)) $$
implies $(M\otimes I_N)(\cS^\perp\otimes\CC^N) \perp P_\alpha (X)
(\cS^\perp\otimes\CC^N)$.
\end{proof}

Let us note in passing that under the conditions of the previous theorem,
\eqref{eq:hIneq2}
implies $h^{(\infty)}(X)w=0$ for all $X\in\cB_g$. Indeed, let us consider
the analytic function $z\mapsto h^{(\infty)}(z X)w$ on $\DD$.
Clearly, \eqref{eq:hIneq2} holds for $X$ replaced by $\delta X$ due to
homogeneity. If $\delta$ is in the series radius, then
by the NC power series expansion and the lemma,
$$ h^{(\infty)}(\delta X)w
= \sum_{\alpha=2}^\infty  h^{(\alpha)}(\delta X)w=0.$$
Thus by analytic continuation,
$ h^{(\infty)}(X)w=0$.

Next we give a corollary which makes the relationship
between Theorem \ref{thm:bidisk} 
and Theorem \ref{thm:ball} clearer.

\begin{cor}\label{cor:oldBallDealThm}
Keep the assumptions and notation from Theorem {\rm \ref{thm:bidisk}}.
If, in addition, $f$ is a pencil ball map, then $M$ is a complete
isometry.

Conversely, $h=L\circ f$ satisfying
{\rm (1), (2), (3), (4)} for a complete isometry $M$
is an NC ball map $\cB_{g'}\to\cB_{d'\times d}$
sending $0$ to $0$.
\end{cor}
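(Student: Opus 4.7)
The plan is to handle the two directions of the corollary separately, in both cases reducing to the Part I classification of NC ball maps.

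Forward direction: If $f$ is a pencil ball map (not merely semi-distinguished), then by definition $\|L(f(X))\|=1$ whenever $\|X\|=1$ and $f(X)$ is defined, so $h=L\circ f$ is an NC ball map with $h(0)=0$. Proposition \ref{prop:linIsom} then asserts that the linear part $h^{(1)}$ is a complete isometry, i.e., $M$ is a complete isometry.

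Converse direction: Assume (1)--(4) together with $M$ a complete isometry. I would first check that $h$ is contraction-valued on $\cB_{g'}$: writing $h(x)=\tilde h(x)\Vx$ with $\tilde h(x)=M+\sum_{\alpha\ge 2}P_\alpha(x)\Pi_{\cS^\perp}$ as in the proof of Theorem \ref{thm:bidisk}, the orthogonality in (4) gives
\begin{equation*}
\|\tilde h(X)v\|^{2}\;=\;\|M\Pi_{\cS}v\|^{2}+\|(M+P^{(\infty)}(X))\Pi_{\cS^\perp}v\|^{2},
\end{equation*}
and combining the isometry of $M|_{\cS}$ from (1) with the norm bound in (4) yields $\|\tilde h(X)\|\le 1$, hence $\|h(X)\|\le \|\VX\|\le \|X\|\le 1$ for $X\in\cB_{g'}$. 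I would then pin down the algebraic structure of $h$ by invoking Theorem \ref{thm:complete-iso-structure} to obtain unitaries $U,V$ and a completely contractive $\varphi$ with $h^{(1)}(x)=V\bmat x & 0 \\ 0 & \varphi(x)\emat U^\T$, and set $\Sigma(x):=V^\T h(x)U$, so that $\Sigma^{(1)}$ is already block-diagonal.

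Finally, apply the matrix Schwarz lemma to $z\mapsto h(zX)$, integrate over $\theta\in[0,2\pi]$, and extract the top-left scalar block of the resulting matrix inequality exactly as in the proof of Theorem \ref{thm:ball-NO-L}; specialising $X=\mathbb X_n$ and invoking Lemma \ref{lem:uniqueS}(1) forces $\Sigma_{11}^{(\alpha)}=\Sigma_{21}^{(\alpha)}=0$ for every $\alpha\ge 2$, while the symmetric argument for $h(X)h(X)^\T$ together with Lemma \ref{lem:uniqueS}(2) kills $\Sigma_{12}^{(\alpha)}$. Thus $h(x)=V\bmat x & 0 \\ 0 & \tilde\chi(x)\emat U^\T$ with $\tilde\chi$ an NC analytic contraction satisfying $\tilde\chi(0)=0$, and the converse direction of Theorem \ref{thm:ball-NO-L} identifies $h$ as the desired NC ball map $\cB_{g'}\to\cB_{d'\times d}$ sending $0$ to $0$. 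The main obstacle is this block-wise Schwarz-and-Nullstellensatz extraction; once that bookkeeping is complete the result reduces to the classification in Theorem \ref{thm:ball-NO-L}.
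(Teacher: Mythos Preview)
Your forward direction matches the paper's exactly.

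For the converse you take a genuinely different route. The paper does not re-derive the block structure of $h$ at all; instead it argues the binding property directly. Having established (just as you do) that $\tilde h(X)$ is a contraction, the paper then uses the distinguished vector $e\in\CC^d$ supplied by Proposition~\ref{prop:ball-map} (the $g=1$ case, so there is a single $f_1=e$ with $A_j^{\T}A_i e=\delta_i^j e$). For any $X$ with $X^{\T}Xw=w$ the vector $e\otimes w$ lies in $\cS\otimes\CC^N$, so by (2)--(3) one has $h(X)(e\otimes w)=h^{(1)}(X)(e\otimes w)$, and the complete isometry property of $M$ makes $\|h^{(1)}(X)(e\otimes w)\|=\|e\otimes w\|$. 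That is the whole argument: one explicit binding vector, no Schwarz integration, no Lemma~\ref{lem:uniqueS}.

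Your approach is correct as well: once you know $h$ is contraction-valued with $h(0)=0$ and $h^{(1)}$ completely isometric, the proof of Theorem~\ref{thm:ball-NO-L} in \S\ref{sec:scottsPf} runs verbatim (the ball-map hypothesis there is only used, via Proposition~\ref{prop:linIsom}, to obtain the complete isometry of $h^{(1)}$, which you already have). So you are essentially re-proving Theorem~\ref{thm:ball-NO-L} inside this corollary and then quoting its converse. That buys you the full structural form \eqref{eq:ballDeal} as a byproduct, at the cost of repeating the $\mathbb X_n$ machinery; the paper's shortcut exploits that (1)--(4) already encode enough of that structure to exhibit a binding vector by hand.
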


\begin{proof}
Suppose $f$ is a pencil ball map. Then
$h^{(1)}$ is a (linear) NC ball
map by Proposition \ref{prop:linIsom}. Hence $h^{(1)}=M\Vx $ with $M$
a complete isometry (see Theorem \ref{thm:complete-iso-structure}).

For the converse, suppose $h$ satisfies (1)--(4). By (1) and (3),
$h(x)=\tilde h(x) \Vx$,
where $\tilde h(x)$ is given by:
$$
\tilde h(x)=M\Pi_{\cS}+ (M+ \sum_{\alpha\geq 2}P_\alpha (x) )\Pi_{\cS^\perp}.
$$
(1) and (4) implies that $\tilde h(X)$ is for $X\in\cB_{g'}$ an
orthogonal sum of two contractions,
thus $\tilde h(X)$ is a contraction for $X\in\cB_{g'}$, i.e., $\|\VX\|\leq 1$.
Hence $h(X)=\tilde h(X)\VX$ is a contraction.

For the binding property of $h$ we use that $M$ is a complete
isometry.
Let $e$ denote the distinguished vector associated with $M$, that is,
$A_j^\T A_ie=\delta_i^je$ if $M=\left[\begin{array}{ccccc}
A_1&\cdots&A_{g'}\end{array}\right]$ (cf. Proposition \ref{prop:ball-map}).
By (1),
$h^{(1)}(X)$ binds
at $e\otimes w$, where $w$ is a binding vector for $I-X^\T X$, i.e.,
$X^\T Xw=w$. This concludes the proof since
$h(X) (e\otimes w)= h^{(1)}(X)(e\otimes w)$
by (2) and (3).
\end{proof}

\section{Further analysis of distinguished isometries}
 \label{sec:further}
We have successfully classified complete isometries, see
Theorem \ref{thm:complete-iso-structure}.
Distinguished isometries are more challenging and a few sample results
are provided below.

\begin{thm}\label{thm:2var}
Suppose $L$ is an orthotropic linear pencil in $2$ variables.
If $\Delta_L(X_1,X_2)\succeq 0$ for all  
$X_1,X_2\in\CC^{n\times n}$,
and clings for all {\it scalar} $X_1,X_2\in\CC$, then
$\Delta_L(X_1,X_2)$ clings for all
$X_1,X_2\in\CC^{n\times n}$.
\end{thm}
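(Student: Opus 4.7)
The strategy is to reduce the matrix clinging problem to a singular matrix pencil question, which Kronecker's canonical form resolves in one stroke.

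First I will put $L$ into orthotropic form via the unitaries of Proposition~\ref{prop:orthotropic}, so that $A_1 = \left[\begin{smallmatrix}1&0\\0&A_{22}\end{smallmatrix}\right]$ and $A_2 = \left[\begin{smallmatrix}0&0\\B_{21}&B_{22}\end{smallmatrix}\right]$. Using the Gram representation computed in the proof of that proposition (with $B_{12}=0$), we have $G = \left[\begin{smallmatrix}0&0\\0&I-M^\T M\end{smallmatrix}\right]$ where $M = [A_{22}\ B_{21}\ B_{22}]$; set $\mathcal K := \ker(I-M^\T M)\subset \CC^{2d-1}$. By Proposition~\ref{prop:gram}, $\Delta_L(X,Y) = W(X,Y)^\T (G\otimes I_n) W(X,Y)$ where $W(X,Y)$ is the $4$-block matrix with blocks $X$, $I_{d-1}\otimes X$, $Y$, $I_{d-1}\otimes Y$ used in that proof. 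Equivalently, matrix clinging at $(X,Y)$ amounts to the existence of $(w_1,w_2)\ne 0$ in $\CC^n\oplus\CC^{(d-1)n}$ with
\[
  \bigl((I_{d-1}\otimes X)w_2,\ Y w_1,\ (I_{d-1}\otimes Y)w_2\bigr)\in \mathcal K\otimes\CC^n.
\]

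Next, for $(X,Y)$ with $Y$ invertible, I reparametrize via $U := Y w_1$, $\zeta := (I_{d-1}\otimes Y)w_2$, and $T := XY^{-1}$. Matrix clinging then becomes the existence of a nonzero element of $\ker(N_0\otimes I_n + N_1\otimes T)\subset\CC^{dn}$, where $N_0 + tN_1$ is the scalar pencil $P\iota_t$ with $\iota_t(u,v) := (tv,u,v)$ and $P$ the orthogonal projection of $\CC^{2d-1}$ onto $\mathcal K^\perp$. The scalar clinging hypothesis, specialized at $y=1$ and variable $x=t\in\CC$, says precisely that $\ker(N_0+tN_1)\ne\{0\}$ for every $t\in\CC$; hence the scalar pencil is \emph{singular}. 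By the Kronecker canonical form for matrix pencils, it admits a nonzero polynomial right null vector $v(t) = v_0 + tv_1 + \cdots + t^k v_k$ with $\CC^d$-valued coefficients, determined by $N_0v_0 = 0$, $N_1v_k = 0$, and $N_0v_j + N_1v_{j-1} = 0$ for $1\le j\le k$.

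With $v(t)$ in hand, I set $\xi := \sum_{j=0}^k v_j\otimes T^j w_0 \in \CC^{dn}$ for a choice of $w_0\in\CC^n$. A telescoping computation using the $v_j$-identities gives $(N_0\otimes I_n + N_1\otimes T)\xi = 0$. Viewing $v(T):\CC^n\to\CC^{dn}$ as the linear map $\sum v_j\otimes T^j$, the vector $\xi = v(T)w_0$ is nonzero for $w_0$ outside a proper Zariski-closed subset, so matrix clinging holds on an open dense set of $(X,Y)$. Since clinging is the closed condition $\det\Delta_L(X,Y) = 0$ (given that $\Delta_L \succeq 0$ throughout), it extends by continuity to all $(X,Y)\in(\CC^{n\times n})^2$.

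The main obstacle is the Kronecker step linking the scalar-cling hypothesis to a polynomial null vector of a matrix pencil; substituting $t\mapsto T$ then lifts this null vector to a matrix-level kernel direction. The \emph{two-variable} hypothesis is essential here: it is precisely what reduces the problem to a \emph{one}-parameter matrix pencil, to which Kronecker's classification applies. In three or more variables the analogous multi-parameter singular-pencil theory is substantially more subtle, and this lift does not proceed in the same way.
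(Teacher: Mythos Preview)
Your argument is correct and takes a genuinely different route from the paper's. Both proofs reduce (via orthotropicity and the Gram representation) to producing, for each matrix $T=XY^{-1}$, a nontrivial element of a certain kernel, and both finish with a density/closure argument. The difference lies in how that kernel element is produced.

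The paper works directly with a basis $\{\eta_i=\begin{bmatrix}\alpha_i v_i\\ \beta_i v_i\end{bmatrix}\}_{i=1}^{t+m}$ of the scalar binding kernel $\mathcal N_0\subseteq\ker G$, where $\{v_1,\dots,v_t\}$ is a maximal linearly independent subset of the $v_i$. It then proves a more general counting result (Theorem~\ref{thm:bindIneq}): whenever $t(g-2)<m$, the linear system expressing matrix clinging in these coordinates is underdetermined and hence solvable. For $g=2$ this reduces to $m>0$, which is established by an elementary pigeonhole argument: if $m=0$ one can choose a ratio $\alpha/\beta$ distinct from every $\alpha_i/\beta_i$, and scalar clinging at $(\alpha,\beta)$ then contradicts the linear independence of $v_1,\dots,v_t$. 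No pencil theory is invoked.

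Your approach instead packages the scalar-clinging hypothesis as singularity of the one-parameter pencil $N_0+tN_1$, extracts a polynomial right null vector $v(t)$ (Kronecker form, or equivalently just clearing denominators in a rational kernel vector over $\CC(t)$), and lifts it by the substitution $t\mapsto T$. This is clean and conceptual, and makes transparent why two variables are special: the problem becomes a \emph{single}-parameter pencil. What the paper's route buys is the quantitative bound $t(g-2)<m$ valid in any number of variables, obtained without Kronecker theory; what your route buys is a one-line explanation of the two-variable phenomenon. One small remark: the existence of a polynomial null vector needs only the elementary fact that $\ker(N_0+tN_1)\neq\{0\}$ for all $t$ forces all maximal minors to vanish identically, hence linear dependence of the columns over $\CC(t)$; invoking the full Kronecker canonical form is more than you need.
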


\begin{rem}
We conjecture based on inconclusive computer experiments that
Theorem \ref{thm:2var} is false in 3 variables.
\end{rem}

\subsection{Equations which reformulate the clinging property} 

Throughout this subsection $L$ will denote an orthotropic $d'\times d$
linear pencil in $g$ variables.
We assume it clings for $X\in\CC^g$.
Let $$\Delta_L(x)=x^\T x-L(x)^\T L(x)=\Vx ^\T G \Vx $$ be the Gram
representation as in Proposition \ref{prop:gram}.
Given $G\in(\CC^{d\times d})^{g\times g}$ 
we call the linear subspace of its
kernel spanned by all the vectors of the form
$$\left[\begin{array}{c}
\al_{1} v \\
\vdots \\
\al_{g} v
\end{array}\right]\in\ker G
$$
the {\bf scalar binding kernel} $\cN_0$. 
(Since $L$ clings for $X\in\CC^g$,
for every $\al_1,\ldots,\al_g\in\CC$ there exists a $0\ne v\in\CC^d$ with
$\left[\begin{array}{c}
\al_{1} v \\
\vdots \\
\al_{g} v
\end{array}\right]\in\ker G$.)

Fix a basis
$$
\left\{
\eta_i:=
\left[\begin{array}{c}
\al_{i,1} v_i \\
\vdots \\
\al_{i,g} v_i
\end{array}\right]
\mid i=1,\ldots, t+m\right\} \subseteq \CC^{gd}$$
for the scalar binding kernel $\cN_0$ of $G$.
We assume that $\{v_1,\ldots, v_t\}$ is a maximal linearly independent set and
that
$$
v_{t+j}= \sum_{i=1}^t  \gamma_{ji} v_i
$$
for $j=1,\ldots,m$.

Let $X_1,\ldots,X_g\in\CC^{n\times n}$. We assume that $X_1$ is invertible
and define $Z_i:=X_1^{-1}X_i$.
(Matrix) binding at $X$ is equivalent to the existence 
(for all $Z_i$)
of a nontrivial solution
to $\Delta_L(I_n,Z_2,\ldots,Z_g)v=0$. This is implied by the existence of $r_i\in\CC^n$ for
which there
is a nonzero $v\in\CC^{dn}$ such that
\begin{equation}
\label{eq:mtxBind}
\left[\begin{array}{c}
(I_d \otimes I_n) v \\
(I_d\otimes Z_2) v \\
\vdots \\
(I_d\otimes  Z_g) v
\end{array}\right]
= \sum_{i=1}^{t+m} \eta_i \otimes r_i.
\end{equation}
In particular,
\begin{eqnarray*}
v&=& \sum_{i=1}^{t+m} v_i \otimes r_i= \sum_{i=1}^t  v_i\otimes r_i
+ \sum_{j=t+1}^{t+m} \sum_{i=1}^t  \gamma_{ji} v_i \otimes r_j= \\
&=& \sum_{i=1}^t  v_i \otimes \underbrace{(r_i + \sum_{j=t+1}^{t+m} \gamma_{ji} r_j )}_{\Gamma_i(r)}
\end{eqnarray*}
Similarly, $(I_d\otimes Z_k)v= \sum_{i=1}^t  v_i \otimes \Gamma_i(Z_k r)$.
Using this in \eqref{eq:mtxBind} yields
$$
\sum_{i=1}^t  v_i\otimes \Gamma_i (Z_k r) = \sum_{i=1}^t
v_i\otimes \Gamma_i\big(r \diag(\alpha_k) \big),
$$
where $r=\left[\begin{array}{ccc} r_1 & \cdots & r_{t+m}\end{array}\right]$
and $\diag(\alpha_k)$ is the diagonal matrix with $\alpha_{i,k}$ as its $(i,i)$ entry.
Linear independence of the $v_1,\ldots,v_t$ gives
$\Gamma_i\big(Z_kr-r \diag(\alpha_k)\big)=0$ for all $k=2,\ldots, g$ and
$i=1,\ldots, t$.
Thus for all these $i,k$:
\begin{equation}
\label{eq:bindSys}
(Z_k-\alpha_{i,k})r_i+ \sum_{j=t+1}^{t+m} \gamma_{ji}(Z_k-\alpha_{j,k})r_j=0.
\end{equation}
Hence if all the $Z_k-\alpha_{i,k}$ are invertible,
\begin{equation}\label{eq:mtxBindEq}
r_i=-\sum_{j=t+1}^{t+m} b_{ij}\big(\diag(\alpha_k),Z_k\big) r_j,
\end{equation}
where
$$
b_{ij}\big(\diag(\alpha_k),Z\big) := \gamma_{ji} (Z-\alpha_{i,k})^{-1}  (Z-\alpha_{j,k}).
$$

Equations derived so far reformulate 
then clinging property and we say how precisely in the following
lemma.

\begin{lem}\label{lem:key1}
Consider the following conditions:
\ben[\rm (i)]
\item
For each $Z_2,\ldots, Z_g$ the system of equations \eqref{eq:bindSys}
has a solution $r_1,\ldots, r_{t+m}$;
\item
$\Delta_L$ clings.
\een
Then {\rm (i)} $\Rightarrow$ {\rm (ii)} and if $\cN_0=\ker G$, then
{\rm (ii)} $\Rightarrow$ {\rm (i)}.
\end{lem}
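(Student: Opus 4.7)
The plan is to treat \eqref{eq:mtxBind} as the bridge between the clinging condition (ii) and the reformulated system (i). The discussion immediately preceding the lemma already establishes two things: (a) if a nonzero $v$ satisfies \eqref{eq:mtxBind}, then the stacked LHS lies in $(\ker G)\otimes\CC^n$ and hence $\Delta_L(I_n,Z_2,\ldots,Z_g)v=0$; and (b) starting from \eqref{eq:mtxBind} the data $(r_i)$ is forced to satisfy \eqref{eq:bindSys}. My two implications are just reversals of these two steps.

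For (i)$\Rightarrow$(ii), given a tuple $X$, by density of invertible matrices together with the truly linear dependence of $L$ on $X_1$ it suffices to show clinging at $(I_n,Z_2,\ldots,Z_g)$ where $Z_k:=X_1^{-1}X_k$. Take a solution $(r_1,\ldots,r_{t+m})$ of \eqref{eq:bindSys} furnished by (i) (interpreted nontrivially, so that the constructed $v$ is nonzero), and set
$$
v:=\sum_{i=1}^{t+m} v_i\otimes r_i=\sum_{i=1}^{t} v_i\otimes\Gamma_i(r).
$$
A direct substitution of \eqref{eq:bindSys} into $Z_k\Gamma_i(r)$, using $v_{t+j}=\sum_i\gamma_{ji}v_i$, gives $(I_d\otimes Z_k)v=\sum_{i=1}^{t+m}\alpha_{i,k}v_i\otimes r_i$ for $k=2,\ldots,g$; this is precisely the $k$-th block of $\sum_i\eta_i\otimes r_i$, while the $k=1$ block matches under the normalization $\alpha_{i,1}=1$ built into the basis $\{\eta_i\}$. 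Hence \eqref{eq:mtxBind} holds and (a) above yields $\Delta_L(I_n,Z_2,\ldots,Z_g)v=0$.

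For (ii)$\Rightarrow$(i) under $\cN_0=\ker G$, fix $Z_2,\ldots,Z_g$ and apply (ii) at $(I_n,Z_2,\ldots,Z_g)$ to obtain a nonzero $v$ with $\Delta_L(I_n,Z_2,\ldots,Z_g)v=0$. Writing $\Delta_L$ via the Gram factorization of Proposition \ref{prop:gram} as $\Delta_L(I_n,Z_2,\ldots,Z_g)=\mathcal{Z}^{\T}(G\otimes I_n)\mathcal{Z}$, with $\mathcal{Z}$ the stacking $u\mapsto[u,(I_d\otimes Z_2)u,\ldots,(I_d\otimes Z_g)u]^{\T}$, one has $\langle(G\otimes I_n)\mathcal{Z}v,\mathcal{Z}v\rangle=0$; combined with $G\succeq 0$ this forces $(G\otimes I_n)\mathcal{Z}v=0$. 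Hence $\mathcal{Z}v\in\ker(G\otimes I_n)=(\ker G)\otimes\CC^n=\cN_0\otimes\CC^n$, where the last equality is exactly where the hypothesis $\cN_0=\ker G$ is used. Expanding $\mathcal{Z}v=\sum_{i=1}^{t+m}\eta_i\otimes r_i$ in the basis $\{\eta_i\}$ of $\cN_0$ reproduces \eqref{eq:mtxBind}, and then step (b) above delivers \eqref{eq:bindSys}.

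The only real content is the standard positive-semidefinite upgrade $y^{\T}My=0$ with $M\succeq 0$ implies $My=0$, together with the tensor identification $\ker(G\otimes I_n)=\ker G\otimes\CC^n$. The hypothesis $\cN_0=\ker G$ is essential and cannot be dropped for the converse direction: without it, $\mathcal{Z}v$ may land in $\ker G\otimes\CC^n$ while lying outside $\cN_0\otimes\CC^n$, and then no decomposition over the scalar-binding basis $\{\eta_i\}$ is available to produce \eqref{eq:bindSys}. Minor bookkeeping in (i)$\Rightarrow$(ii) consists in insisting that a "solution" to \eqref{eq:bindSys} be one for which $(\Gamma_1(r),\ldots,\Gamma_t(r))$ is not identically zero, so that the constructed $v$ is nonzero.
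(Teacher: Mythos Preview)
Your proof is correct and is exactly the argument the paper has in mind: the paper's own proof is the single sentence ``Follows from the computations given above,'' and you have simply written those computations out, including the standard upgrade $y^{\T}My=0,\ M\succeq 0\Rightarrow My=0$ and the identification $\ker(G\otimes I_n)=(\ker G)\otimes\CC^n$ that make the converse go through under $\cN_0=\ker G$. Your remarks on the implicit normalization $\alpha_{i,1}=1$ and on requiring the solution of \eqref{eq:bindSys} to produce a nonzero $v$ are appropriate clarifications of details the paper leaves tacit.
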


\begin{proof}
Follows from the computations given above.
\end{proof}

\subsection{The general case and the proof of Theorem \ref{thm:2var}}

Now we give a theorem more general than Theorem \ref{thm:2var}
that implies Theorem \ref{thm:2var}.

\begin{thm}
\label{thm:bindIneq}
Suppose $t(g-2) < m$. If $\Delta_L(X)\succeq 0$ for all $X\in\big(\CC^{n\times n}\big)^g$
and clings for all {\it scalar} $X\in\CC^g$, then
$\Delta_L(X)$ clings for all
$X\in\big(\CC^{n\times n}\big)^g$.
\end{thm}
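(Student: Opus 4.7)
My plan is to invoke Lemma \ref{lem:key1}, which reduces the statement to the following: for every $X=(X_1,\ldots,X_g)$ with $X_1$ invertible (writing $Z_k := X_1^{-1}X_k$), one must produce $r_1,\ldots,r_{t+m}\in\CC^n$ satisfying \eqref{eq:bindSys} such that $v = \sum_{i=1}^t v_i \otimes \Gamma_i(r) \ne 0$, or equivalently $\Gamma_i(r)\ne 0$ for some $i$. Once this is done for invertible $X_1$, matrix clinging for singular $X_1$ comes for free: the clinging locus $\{X : \det\Delta_L(X)=0\}$ is cut out by a polynomial, so vanishing on the Zariski-dense open set $\{X_1 \text{ invertible}\}$ forces vanishing on all of $(\CC^{n\times n})^g$.

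To produce the required $r$, I would introduce new coordinates $s_i := \Gamma_i(r)$ while retaining $r_+ := (r_{t+1},\ldots,r_{t+m})$. As in the derivation preceding Lemma \ref{lem:key1}, the system \eqref{eq:bindSys} transforms into
\begin{equation*}
(Z_k-\alpha_{i,k})\,s_i \;=\; \sum_{j=1}^m \gamma_{j,i}(\alpha_{t+j,k}-\alpha_{i,k})\,r_{t+j}, \qquad 1\le i\le t,\ 2\le k\le g,
\end{equation*}
and the task becomes producing a solution $(s,r_+)$ with $s\ne 0$. Let $D$ denote the $(g-1)t\times m$ scalar matrix whose $((i,k),j)$-entry is $\gamma_{j,i}(\alpha_{t+j,k}-\alpha_{i,k})$. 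A direct count of unknowns versus equations yields a solution space of dimension at least $n[m-(g-2)t]>0$; restricting to $s=0$ leaves the subspace $\{(0,r_+) : (D\otimes I_n)r_+=0\}$ of dimension $n(m-\operatorname{rank} D)$. Hence solutions with $s\ne 0$ exist whenever $\operatorname{rank} D > (g-2)t$.

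The crux of the proof, and the main obstacle, is the stronger identity $\operatorname{rank} D = m$. To verify it I would reason as follows: given $c=(c_1,\ldots,c_m)\in\ker D$, set $\beta_i := \sum_j c_j\gamma_{j,i}$; using the normalization $\alpha_{\cdot,1}=1$, the defining identity for $\ker D$ extends trivially to $k=1$, so $\sum_j c_j\gamma_{j,i}\alpha_{t+j,k}=\alpha_{i,k}\beta_i$ holds for every pair $(i,k)$. Computing $\xi := \sum_j c_j\eta_{t+j}\in\cN_0$ block by block and substituting $v_{t+j}=\sum_{i'}\gamma_{j,i'}v_{i'}$ shows that the $k$-th block of $\xi$ equals $\sum_{i'}\alpha_{i',k}\beta_{i'}v_{i'}$, which is precisely the $k$-th block of $\sum_{i'}\beta_{i'}\eta_{i'}$. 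Therefore $\sum_j c_j\eta_{t+j}=\sum_{i'}\beta_{i'}\eta_{i'}$, a linear dependence among the basis $\{\eta_1,\ldots,\eta_{t+m}\}$ of $\cN_0$, which forces $c=0$. With $\operatorname{rank} D=m>(g-2)t$ in hand, the projection to the $s$-coordinates has trivial kernel, so its image has full positive dimension $\ge n[m-(g-2)t]$, yielding a solution with $s\ne 0$ and hence $v\ne 0$; the Zariski-closure step of the first paragraph then completes the argument.
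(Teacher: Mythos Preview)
Your argument is correct and follows the same basic line as the paper: reduce to invertible $X_1$, invoke Lemma~\ref{lem:key1}, and count dimensions in the homogeneous linear system~\eqref{eq:bindSys}. The paper's own proof simply observes that, after solving \eqref{eq:mtxBindEq} for $r_1,\dots,r_t$ in terms of $r_{t+1},\dots,r_{t+m}$, the compatibility conditions $r_i(2)=\cdots=r_i(g)$ impose $tn(g-2)$ homogeneous constraints on $mn$ unknowns, so a nontrivial $r_+$ exists whenever $m>t(g-2)$.

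Your version goes further in one important respect: you verify that the resulting $v=\sum_{i=1}^t v_i\otimes\Gamma_i(r)$ is actually nonzero. The paper's count produces a nontrivial $r_+$, but does not rule out the possibility that all $\Gamma_i(r)$ vanish; your identity $\operatorname{rank} D=m$ (equivalently $\ker D=0$) closes this gap cleanly, since then the only solution with $s=0$ is the trivial one. Your coordinate change to $(s,r_+)$ also avoids the paper's auxiliary genericity assumption that every $Z_k-\alpha_{i,k}$ be invertible. The rank argument itself---reading $c\in\ker D$ as a linear dependence $\sum_j c_j\eta_{t+j}=\sum_{i'}\beta_{i'}\eta_{i'}$ among the basis vectors of $\cN_0$---is correct and elegant, and relies only on the normalization $\alpha_{\cdot,1}=1$ already implicit in the paper's derivation of \eqref{eq:mtxBind}. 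So: same strategy, but your execution is the more complete one.
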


\begin{proof}
We assume all $b_{ij}(\diag(\alpha_k),Z_k)$ exists, i.e.,
all $Z_k-\alpha_{i,k}$ are invertible. This causes no
loss of generality: the set of all matrix $g$-tuples that make $\Delta_L$ cling
is closed and our condition implies clinging on a dense subset.

Equation \eqref{eq:mtxBindEq} gives $r_i=r_i(k)$ as a function of $k$.
By Lemma \ref{lem:key1} we need to show that for every choice of $Z_i$
the system \eqref{eq:bindSys} has a solution, i.e.,
$r_i(2)=r_i(3)=\cdots =r_i(g)$ for all
$i=1,\ldots, t$. This yields $tn(g-2)$ homogeneous equations in $mn$ unknowns. Thus
if $m>t(g-2)$ this system will always have a nontrivial solution.
\end{proof}

\begin{proof}[Proof of Theorem {\rm \ref{thm:2var}}]
Fix a basis
$$
\left\{
\left[\begin{array}{c}
\al_i v_i \\
\beta_i v_i
\end{array}\right]
\mid i=1,\ldots, t+m\right\}$$
for the scalar binding kernel $\cN_0$,
where $\{v_1,\ldots,v_t\}$ is a maximal linearly independent set.
In view of Theorem \ref{thm:bindIneq} it suffices to show that $m>0$.

Suppose $m=0$ and choose $\al,\be$ with $\frac{\al}{\be} \neq \frac{\al_i}{\be_i}$
for all $i$. By scalar binding, there is a nonzero vector $u$ with
$\left[\begin{array}{c}
\al_i u \\
\beta_i u
\end{array}\right]
\in \cN_0$, i.e., for some $\lambda_i$:
$$
\left[\begin{array}{c}
\al u \\
\beta u
\end{array}\right]
=\sum_{i=1}^t  \lambda_i \eta_i =\sum_{i=1}^t  \lambda_i \left[\begin{array}{c}
\al_i v_i \\
\beta_i v_i
\end{array}\right]
$$
Hence
\begin{eqnarray*}
\be \sum_{i=1}^t  \lambda_i \al_i v_i &=& \al  \sum_{i=1}^t  \lambda_i \be_i v_i
\end{eqnarray*}
and thus by the linear independence of the $v_i$,
$\be \lambda_i \al_i= \al \lambda_i \be_i$ for all $i=1,\ldots, t$.
As at least one $\lambda_j$ is nonzero, this implies
$$
\frac{\al}{\be}=\frac{\al_j}{\be_j}
,
$$
contrary to our assumption. Thus $m>0$, as desired.
\end{proof}

\section{Acknowledgments}
The authors thank Victor Vinnikov and 
Dima Kalyuzhnyi-Verbovetski\u\i{} for helping us with NC analytic functions.


\newpage

\centerline{NOT FOR PUBLICATION}

\tableofcontents

\end{document}